\theoremstyle{plain}
\newtheorem{prop}{Proposition}	
\newtheorem{definition}[prop]{Definition}
\newtheorem{theorem}[prop]{Theorem}
\newtheorem{cor}[prop]{Corollary}
\newtheorem{lemma}[prop]{Lemma}
\newtheorem{remark}[prop]{Remark}
\numberwithin{prop}{section}
\numberwithin{equation}{section}
\newtheorem*{definition*}{Definition}
\newtheorem*{theorem*}{Theorem}
\newtheorem*{remark*}{Remark}
\newcommand{\NN}{\ensuremath{\mathbb{N}}}
\newcommand{\ZZ}{\ensuremath{\mathbb{Z}}}
\newcommand{\IRS}[1]{\ensuremath{\mathrm{IRS}\left( #1 \right) }}
\newcommand{\IRSfi}[1]{\ensuremath{\mathrm{IRS}_{\mathrm{fi}}\left( #1 \right) }}
\newcommand{\IRSerg}[1]{\ensuremath{\mathrm{IRS}_{\mathrm{ergodic}}\left( #1 \right) }}
\newcommand{\IRSfe}[1]{\ensuremath{\mathrm{IRS}_{\mathrm{fe}}\left( #1 \right) }}
\DeclarePairedDelimiter{\ceil}{\lceil}{\rceil}
\DeclarePairedDelimiter{\floor}{\lfloor}{\rfloor}
\newcommand{\nrm}{\ensuremath{\vartriangleleft }}
\newcommand{\Sub}[1]{\ensuremath{\mathrm{Sub}\left({#1}\right)}}
\newcommand{\Subd}[2]{\ensuremath{\mathrm{Sub}_{\le #2}\left({#1}\right)}}
\newcommand{\Subfi}[1]{\ensuremath{\mathrm{Sub}_{\mathrm{fi}}\left({#1}\right)}}
\newcommand{\Pow}[1]{\ensuremath{\mathrm{Pow}\left({#1}\right)}}
\newcommand{\Probs}[1]{\ensuremath{\mathcal{P}\left({#1}\right)}}
\title[Infinitely presented permutation stable groups]{Infinitely presented permutation stable groups and invariant random subgroups of metabelian groups}
\author{Arie Levit \and Alexander Lubotzky}
\begin{document}

\maketitle

\begin{abstract} 
We prove that  all invariant random subgroups of the  lamplighter group $L$ are co-sofic.  It follows that $L$ is permutation stable,   providing an  example of an infinitely presented such a group. Our  proof applies  more generally to  all permutational wreath products of finitely generated abelian groups.  We rely on the pointwise ergodic theorem for amenable groups. 
\end{abstract}

\section{Introduction}

 Let $\mathrm{S}(n)$ denote the symmetric group of degree $n \in \NN$ with   the bi-invariant Hamming metric $d_n$  given by
$$ d_n(\sigma, \tau) = 1 - \frac{1}{n} \left|\mathrm{Fix}(\sigma^{-1} \tau)\right|.$$

Let $G$ be a finitely generated group. An \emph{almost-homomorphism} of $G$ is a sequence of set theoretic maps $f_n : G \to \mathrm{S}(n )$   satisfying
$$ d_{n } \left(f_n(g) f_n(h), \, f_n(gh)\right) \xrightarrow{n\to\infty} 0, \quad \forall g,h \in G.$$
The   almost-homomorphism $f_n$ is \emph{close to a homomorphism} if there is a sequence of group homomorphisms $\rho_n : G \to \mathrm{S}(n )$ satisfying
$$ d_{n } \left(\rho_n(g), \, f_n(g)\right) \xrightarrow{n\to\infty} 0 \quad \forall g\in G.$$

\begin{definition}
\label{def:stable in permutations}
The group $G$ is \emph{permutation stable} (or $P$-stable for short) if every almost-homomorphism of $G$ is close to a homomorphism.
\end{definition}

Various   notions of stability have been considered in the literature, cf. \cite{arzhantseva2015almost, de2017stability, ThomICM}. In this paper we consider permutation stability.  In recent years there has been a growing interest in this kind of  stability, originating  in the study of \enquote{almost solutions} to group theoretic equations \cite{glebsky2009almost}. Interestingly, permutation stability provides   a possible approach to tackle the seminal problem \enquote{are there non-sofic groups}? See \cite{arzhantseva2015almost, becker2019stability, bowen2019flexible, glebsky2009almost, ThomICM} and the references therein.

Until quite recently only very few groups were known to be permutation stable: free groups (trivially), finite groups \cite{glebsky2009almost} and abelian groups \cite{arzhantseva2015almost}.  The situation was dramatically changed in \cite{becker2019stability}. That work established a connection between permutation stability and invariant random subgroups
 of a given amenable group $G$. 
 
 An invariant random subgroup is  a conjugation invariant   probability measure on the   space of  all   subgroups of $G$. An invariant random subgroup $\mu$ is  called   \emph{co-sofic} if $\mu$  is a limit   of invariant random subgroups supported on finite index subgroups\footnote{See  \S\ref{sec:random subsets} below for a more detailed discussion      of invariant random subgroups.}.

\begin{theorem}[\cite{becker2019stability}]
\label{thm:BLT}
A finitely generated  amenable group  $G$ is permutation stable  if and only if every invariant random subgroup of $G$ is co-sofic.
\end{theorem}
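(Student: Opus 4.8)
\emph{The plan.} I would set up a dictionary between almost-homomorphisms $f_n\colon G\to\mathrm{S}(n)$ and probability-measure-preserving (p.m.p.) actions of $G$, under which ``close to a homomorphism'' translates into ``co-sofic stabilizer invariant random subgroup,'' and then use amenability to make the dictionary rich enough to see every invariant random subgroup. Fix a non-principal ultrafilter $\omega$ on $\NN$. Given an almost-homomorphism $\mathbf{f}=(f_n)$, form the Loeb probability space $(X,\mu)$ obtained as the ultraproduct $\prod_{n\to\omega}\{1,\dots,n\}$ of the finite sets each with its uniform probability measure. The hypothesis $d_n(f_n(g)f_n(h),f_n(gh))\to0$ says exactly that the sets $\{x_n: f_n(g)f_n(h)x_n\ne f_n(gh)x_n\}$ have vanishing normalized cardinality and hence are $\omega$-null, so $g\cdot[x_n]_\omega:=[f_n(g)x_n]_\omega$ defines a genuine p.m.p. action $G\curvearrowright(X,\mu)$; let $\mu_{\mathbf{f}}$ be its stabilizer invariant random subgroup, the law of $\mathrm{Stab}_G(x)$ for $\mu$-random $x$. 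On the other side, a homomorphism $\rho\colon G\to\mathrm{S}(m)$ is the same as a $G$-action on a finite set, whose stabilizer invariant random subgroup is supported on finite-index subgroups; conversely every invariant random subgroup supported on finite-index subgroups lies in the weak-$*$ closure of the stabilizer invariant random subgroups of finite $G$-actions (take disjoint unions of coset actions $G/H$ with suitable multiplicities). Thus ``co-sofic'' means precisely ``a weak-$*$ limit of stabilizer invariant random subgroups of finite $G$-actions.''

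\emph{The two halves.} The theorem then reduces to proving: (i) $\mathbf{f}$ is close to a homomorphism if and only if $\mu_{\mathbf{f}}$ is co-sofic; and (ii) when $G$ is amenable, every invariant random subgroup of $G$ equals $\mu_{\mathbf{f}}$ for some almost-homomorphism $\mathbf{f}$. Indeed, if every invariant random subgroup is co-sofic then $\mu_{\mathbf{f}}$ is co-sofic for every $\mathbf{f}$, so by (i) every $\mathbf{f}$ is close to a homomorphism and $G$ is $P$-stable; conversely if $G$ is $P$-stable, then (ii) realizes an arbitrary invariant random subgroup as some $\mu_{\mathbf{f}}$, $P$-stability makes $\mathbf{f}$ close to a homomorphism, and the easy half of (i) makes $\mu_{\mathbf{f}}$ co-sofic. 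That easy half is just a computation: if $d_n(\rho_n(g),f_n(g))\to0$ with $\rho_n$ homomorphisms, the identity map of $X$ intertwines the ultraproduct actions of $(\rho_n)$ and of $(f_n)$ off a $\mu$-null set, so $\mu_{\mathbf{f}}$ coincides with the ultraproduct stabilizer invariant random subgroup of the finite actions $\rho_n$, which is co-sofic by the previous paragraph.

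\emph{Amenability.} Both (ii) and the nontrivial half of (i) are powered by the Rokhlin lemma for amenable groups (Ornstein--Weiss quasi-tilings): any p.m.p.\ action of an amenable group admits arbitrarily fine almost-invariant Rokhlin towers, hence is a local limit of finite almost-actions, and assembling these along $\omega$ yields an almost-homomorphism with any prescribed stabilizer invariant random subgroup --- this gives (ii). For the hard half of (i) one needs, first, the ultraproduct rigidity statement that two almost-homomorphisms $\mathbf{f}=(f_n)$, $\mathbf{f}'=(f'_n)$ into $\mathrm{S}(n)$ are conjugate-close (intertwined by permutations $\sigma_n$ with $d_n(\sigma_n f_n(g)\sigma_n^{-1},f'_n(g))\to0$ for all $g$) precisely when their ultraproduct actions are isomorphic $G$-actions --- the content being that an abstract isomorphism of the Loeb spaces is $\omega$-approximable by honest partial bijections of $\{1,\dots,n\}$ that complete to permutations; and, second, again via quasi-tilings, that a p.m.p.\ action of an amenable group with co-sofic stabilizer invariant random subgroup can be modeled inside the ultraproduct by genuine finite $G$-actions of matching sizes $n$. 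Feeding the approximants $\nu_k\to\mu_{\mathbf{f}}$ into these two inputs and diagonalizing over a shrinking sequence of finite subsets of $G$ and $\varepsilon\downarrow0$ produces homomorphisms $\rho_n\colon G\to\mathrm{S}(n)$ with $d_n(\rho_n(g),f_n(g))\to0$.

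\emph{Main obstacle.} The crux is exactly this last passage: upgrading the measure-theoretic fact that $\mu_{\mathbf{f}}$ is co-sofic to a combinatorial approximation of $f_n$ by an honest homomorphism on the \emph{same} $n$ points, uniform enough to diagonalize. It is here, and not in any formal manipulation, that amenability is indispensable --- the Ornstein--Weiss/Rokhlin machinery is what turns ``the same invariant random subgroup in the limit'' into ``conjugate-close as almost-homomorphisms,'' an implication that already fails for free groups.
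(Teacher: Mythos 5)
Note first that the paper does not prove Theorem~\ref{thm:BLT}: it is quoted verbatim from \cite{becker2019stability} and used as a black box. So there is no ``paper's own proof'' to compare against; what you are really attempting is a reconstruction of the Becker--Lubotzky--Thom argument.

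Your outline gets the high-level architecture right --- the Loeb/ultraproduct dictionary between almost-homomorphisms and p.m.p.\ $G$-actions, the decomposition into claims (i) and (ii), the easy half of (i) by intertwining, and the insistence that amenability enters through Ornstein--Weiss quasi-tilings. But as written there is a genuine gap in the hard half of (i), and it is not only that you defer ``the crux''; the intermediate invariant you hang the argument on is not the one co-soficity controls. Co-soficity of $\mu_{\mathbf{f}}$ is a statement about the \emph{stabilizer} invariant random subgroup, while your step ``the ultraproduct action can be modeled inside the ultraproduct by genuine finite $G$-actions'' is a statement about the full $G$-action up to isomorphism. These are not a priori the same: the stabilizer law does not determine a p.m.p.\ action (all essentially free $\ZZ$-actions, Bernoulli or rotation, share the trivial stabilizer IRS). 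What rescues the argument in the amenable case is precisely a further lemma --- that for amenable $G$, the IRS determines the local statistics of the almost-homomorphism, and that matching local statistics forces conjugate-closeness --- and this is the actual content of \cite{becker2019stability}, not a formal reduction. As it stands, your step 3 (``model by finite actions'') would need to produce homomorphisms $\rho_n$ whose \emph{local statistics}, not merely IRS, agree with those of $f_n$ in the $\omega$-limit, and you have not explained how co-soficity of the IRS alone yields this. Relatedly, the ``ultraproduct rigidity'' you invoke (isomorphic Loeb actions $\Rightarrow$ conjugate-close) is a nontrivial Elek--Szab\'o-type theorem for amenable groups, and the extension from the free (stabilizer $=\delta_{\{e\}}$) case to arbitrary stabilizer IRS is itself part of what has to be proved. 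In short: the skeleton is plausible and matches the spirit of BLT, but the two sub-lemmas you flag as ``powered by the Rokhlin lemma'' together with the missing IRS-to-local-statistics bridge \emph{are} the theorem, and the proposal asserts them rather than proving them.
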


Proving that   every invariant random subgroup   is co-sofic is also of interest for general groups. For example,  the Aldous--Lyons conjecture \cite{aldous2007processes}   asserts that this is the case for free groups. Another example is the Stuck--Zimmer theorem \cite{stuck1994stabilizers} proving this  for all high rank lattices with property (T). See   \cite{gelander2015lecture} for details.


The significance of Theorem \ref{thm:BLT} is in transforming the question of permutation stability for amenable groups into the realm of invariant random subgroups. This new view point enabled the authors of \cite{becker2019stability} to give many examples of permutation stable groups, e.g. the polycyclic-by-finite  ones as well as the  Baumslag--Solitar groups $B(1,n)$ for all $ n \in \ZZ$. In these examples it is relatively straightforward to classify all invariant random subgroups and in particular show all are  co-sofic. 

If one considers slightly more complicated solvable groups, e.g. the lamplighter group $\ZZ_2 \wr \ZZ$, the answer is no longer immediately clear. A detailed study of the invariant random subgroups of $\ZZ_2 \wr \ZZ$ was performed in \cite{bowen2015invariant, grigorchuk2014lattice, hartman2015furstenberg} but towards different sets of goals. The main goal of this paper is to prove

\begin{theorem}
\label{thm:main theorem for lamplighters}
Every invariant random subgroup of the lamplighter group $\ZZ_2 \wr \ZZ$ is co-sofic and so the group  $\ZZ_2 \wr \ZZ$ is permutation stable.
\end{theorem}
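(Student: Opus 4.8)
The plan is to invoke Theorem~\ref{thm:BLT}: since $L := \ZZ_2 \wr \ZZ$ is amenable, $P$-stability is equivalent to the assertion that every invariant random subgroup of $L$ is co-sofic, and this is what I would prove. Write $W = \bigoplus_{\ZZ}\ZZ_2 \cong \Fp[t^{\pm 1}]$ (with $p=2$) for the base, so $L = W \rtimes \ZZ$ with $\ZZ = \langle t\rangle$ acting by the shift. The co-sofic invariant random subgroups form a weak-$*$ closed convex subset of $\Probs{\Sub{L}}$ — by definition it is the closed convex hull of the measures supported on $\Subfi{L}$ — so, writing an arbitrary $\mu$ as a convex combination of its restrictions to the conjugation-invariant Borel sets $\{H : \pi_{\ZZ}(H) = 0\}$ and $\{H : \pi_{\ZZ}(H) \neq 0\}$ (where $\pi_{\ZZ} \colon L \to L/W \cong \ZZ$), it suffices to treat two cases: (i) $\mu$ is supported on $\{H : \pi_{\ZZ}(H)\neq 0\}$; (ii) $\mu$ is supported on $\Sub{W}$.

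In case~(i) I would show that $\mu$ is in fact supported on $\Subfi{L}$, hence co-sofic tautologically. For $H$ with $\pi_{\ZZ}(H) = k\ZZ$, $k \geq 1$, pick $h = (w_0,k) \in H$; since $H \cap W \trianglelefteq H$ and conjugation by $h$ acts on $W$ as multiplication by $t^{k}$, the subgroup $V := H \cap W$ is a module over $R := \Fp[t^{\pm k}]$, and $H$ is determined by the pair $\bigl(V,\ w_0 + V\bigr)$ with $w_0 + V \in W/V$. Conjugating $h$ by $W$ translates $w_0 + V$ by the subgroup $\bar U := ((1-t^{k})W + V)/V$ of $W/V$; disintegrating $\mu$ over the map $H \mapsto (k(H), V(H))$, the conditional law of $w_0 + V$ is a probability measure on the countable group $W/V$ invariant under translation by $\bar U$, and a countable group carries such a measure only when the translating subgroup is finite. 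So $\bar U$ is $\mu$-a.s.\ finite; then $W/((1-t^{k})W + V)$ is a finitely generated module over $R/(1-t^{k})R \cong \Fp$, hence finite, so $W/V$ is finite and $[L:H] = k \cdot \abs{W/V} < \infty$.

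In case~(ii) the measure $\mu$ is shift-invariant on the $\Fp$-subspaces of $W \cong \Fp[t^{\pm1}]$, and I would approximate it by periodization onto the finite quotients $Q_n := \ZZ_2 \wr_{\ZZ/n}(\ZZ/n)$ of $L$. Given a subspace $H \le W$, restrict it to a window of $n$ consecutive coordinates, transport the restriction to a subgroup $\widetilde H_n$ of the base $\Fp[\ZZ/n]$ of $Q_n$ via the identification of the window with $W/(t^{n}-1)$, take the uniform measure on the $Q_n$-conjugation orbit of $\widetilde H_n$, pull it back to $L$, and integrate over $H \sim \mu$; this yields an invariant random subgroup $\mu_n$ of $L$ supported on $\Subfi{L}$. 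To see $\mu_n \to \mu$ weak-$*$, test against a cylinder $\{H' : H' \cap F = S\}$; by support considerations one may take $F \subseteq W$ finite, contained in a window of length $\ell$, and then, after using shift-invariance of $\mu$ to realign, this cylinder carries exactly its $\mu$-probability for the $n - O(\ell)$ ``central'' elements of the conjugation orbit and some value in $[0,1]$ for the $O(\ell)$ ``boundary'' ones, so $\mu_n(\{H' \cap F = S\}) = \mu(\{H' \cap F = S\}) + O(\ell/n) \to \mu(\{H' \cap F = S\})$.

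The main obstacle I anticipate is making case~(ii) rigorous: the window is not a subgroup, so one must define $\widetilde H_n$ and the pulled-back finite-index subgroups of $L$ carefully, verify that the averaged measures are genuinely conjugation-invariant, and — the crucial point — control the wrap-around near the boundary of the window, which is exactly where the Følner property of the windows (equivalently, the Benjamini--Schramm convergence $Q_n \to L$) enters; a general amenable group possesses no such tower of finite quotients, and indeed need not be $P$-stable. For the general statement on permutational wreath products $A \wr_X B$ of finitely generated abelian groups one replaces $\ZZ$ by the amenable group $B$, the windows by Følner fundamental domains of finite-index subgroups of $B$, and the convergence of the relevant empirical averages by the pointwise ergodic theorem for amenable groups; Noetherianity of $\ZZ[B]$ is what keeps the module-theoretic bookkeeping of case~(i) finitary.
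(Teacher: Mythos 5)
Your proposal is correct in outline and takes a genuinely different route from the paper, which obtains the lamplighter case only as a corollary of the general theorem on permutational wreath products. Where you condition on the invariant Borel partition $\{\pi_{\ZZ}(H)=0\}$ versus $\{\pi_{\ZZ}(H)\neq 0\}$, the paper first passes to the ergodic decomposition, fixes the a.s.\ constant subgroup $R=Q_H\leq Q$, and runs one uniform construction for every $R$. Your case~(i) is a lamplighter-specific sharpening of the same disintegration idea the paper uses to prove that $[N:\mathrm{N}_N(H)]$ is a.s.\ finite: you additionally exploit that $W=\Fp[t^{\pm 1}]$ is a finitely generated module over $\Fp[t^{\pm k}]$ with finite quotient $W/(t^k-1)W$, so that finiteness of $\bar U$ upgrades to $[L:H]<\infty$ and that part of $\mu$ is co-sofic tautologically — a strictly stronger conclusion that has no analogue for a general permutational metabelian group. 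Your case~(ii) bypasses the paper's entire apparatus of Goursat triplets, controlled approximations, Hall's theorem, Weiss monotiling, and the Lindenstrauss pointwise ergodic theorem, periodizing instead along the cofinal tower of honest quotient homomorphisms $\phi_n\colon L\to \ZZ_2\wr_{\ZZ/n}(\ZZ/n)$ and checking cylinder convergence by direct boundary counting; this is available precisely because $\ker\phi_n=(t^n-1)W\rtimes n\ZZ$ shrinks to the identity and admits F\o lner-box transversals, a luxury a general amenable acting group does not supply, which is exactly why the paper has to go through Weiss approximations and the ergodic theorem. The gaps you flag are routine: conjugation-invariance of $\mu_n$ holds because you pull back along the homomorphism $\phi_n$, and cylinders involving elements of $L\setminus W$ contribute nothing in the limit since $\mu_n$-a.s.\ $\pi_{\ZZ}(H')=n\ZZ$. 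The trade-off is as you already observe — your argument is shorter, more concrete, and even gives a finer structural statement in case~(i), but unlike the paper's it does not scale beyond lamplighter-like groups.
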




\subsection*{Permutational wreath products}
Our results are in fact more general.
Namely, let $Q$ and $B$ be   finitely generated abelian groups. Let $X$ be a set admitting an action of the group $Q$ with finitely many orbits. The \emph{permutational wreath product} $B \wr_X Q$ corresponding to this $Q$-action and to the base group $B$ is the semidirect product $Q \ltimes \bigoplus_{x \in X} B$. The group $Q$ acts on the normal subgroup $\bigoplus_{x \in X} B$ by group automorphisms permuting coordinates.


\begin{theorem}
	\label{thm:main theorem}
Let $G$ be a  permutational wreath product of two finitely generated abelian groups. Every invariant random subgroup of $G$ is co-sofic, and hence     $G$ is permutation stable.
\end{theorem}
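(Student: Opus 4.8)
By Theorem~\ref{thm:BLT}, since $G = B \wr_X Q$ is finitely generated and virtually $\bigoplus_X B \rtimes Q$ with $Q, B$ abelian, hence amenable, it suffices to show that every invariant random subgroup (IRS) $\mu$ of $G$ is co-sofic. The natural strategy is to exploit the structure of $G$ as an extension $1 \to N \to G \to Q \to 1$ where $N = \bigoplus_{x \in X} B$ is the base group, abelian and normal. Given an IRS $\mu$ on $G$, its pushforward to $Q$ is an IRS on the finitely generated abelian group $Q$, which is automatically co-sofic (all subgroups of $\ZZ^d \times F$ are separable/finite-index-approximable). The real content is to understand how $\mu$ distributes over subgroups $H \le G$ by recording the pair $(H \cap N, \bar H)$ where $\bar H \le Q$ is the image — and then to approximate $\mu$ by IRSs supported on finite-index subgroups.

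**Key steps.** First I would set up the coordinate structure: a subgroup $H \le G$ with image $\bar H \le Q$ of finite index is essentially governed by a subgroup $L = H \cap N \le N$ together with a cocycle/section datum, and the conjugation action of $G$ on such $H$ projects to the translation action of $Q$ on subgroups of $N = \bigoplus_X B$. Since $X$ has finitely many $Q$-orbits, each orbit is of the form $Q/Q_0$ for a finite-index subgroup $Q_0 \le Q$ (using $Q$ abelian, finitely generated), and $N$ decomposes accordingly as a direct sum over these orbits of \enquote{induced} modules $\bigoplus_{Q/Q_0} B$. Second — the heart of the proof — I would analyze IRSs of the abelian group $N$ under the $Q$-action, i.e. $Q$-invariant random subgroups of $N$. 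Here I expect to invoke the pointwise ergodic theorem for the amenable group $Q$ (as the abstract promises): decompose $\mu$ into ergodic components, and for an ergodic $Q$-invariant random subgroup $\nu$ of $N$, use a Følner exhaustion $F_n \nearrow Q$ to show that $\nu$-almost every subgroup $L \le N$ is \enquote{almost $F_n$-periodic}, so that the finite truncations $L \cap \bigoplus_{F_n \cdot x} B$ generate (via the $Q$-translates forming a finite-index pattern) a finite-index subgroup of $N$ approximating $L$; averaging these truncations produces finitely-supported-on-finite-index IRSs converging to $\nu$. Third, I would lift this from $N$ back to $G$: combine the co-soficity of the $Q$-part with the co-soficity of the $N$-part, handling the extension by noting that a finite-index subgroup $L' \le N$ that is $Q_1$-invariant for some finite-index $Q_1 \le Q$ extends to a finite-index subgroup of $G$, and that the IRS-level approximation is compatible with taking such extensions.

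**Main obstacle.** The delicate point is the passage from ergodic-theoretic approximation of $L \le N$ to genuine \emph{finite-index} subgroups that are simultaneously $Q$-translation-compatible: truncating $L$ to a Følner window $F_n$ need not give a subgroup invariant under any finite-index subgroup of $Q$, so one must periodize the truncation and control the error introduced, showing it is small in the IRS (weak-$*$) topology — this is exactly where the pointwise ergodic theorem and the amenability of $Q$ do the work, converting \enquote{asymptotic invariance} into \enquote{approximate periodicity with vanishing defect}. A secondary technical nuisance is that $B$ may be infinite (e.g.\ $B = \ZZ$, not just $\ZZ_2$), so subgroups of $B^{F_n}$ form a positive-dimensional family and one must also separate/approximate within each coordinate $B$; but since $B$ is a finitely generated abelian group, its subgroup lattice is well-behaved and every subgroup is an intersection of finite-index ones, so this adds bookkeeping rather than a genuine difficulty. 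Finally, one should check that the reduction via Theorem~\ref{thm:BLT} is legitimate, i.e.\ that $G$ is indeed finitely generated (clear: $Q$ f.g.\ and finitely many orbits make $N$ f.g.\ as a $Q$-group) and amenable (an extension of the amenable $Q$ by the abelian $N$).
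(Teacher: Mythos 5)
Your proposal has the right high-level architecture and correctly identifies the obstacle, but the plan to overcome it is genuinely incomplete, and at one structural point it diverges from what the paper does in a way that matters.

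\emph{Where you agree with the paper.} The reduction via Theorem~\ref{thm:BLT}, the reduction to ergodic $\mu$, the observation that the pushforward to $Q$ is atomic so $Q_H = R$ a.e.\ for a fixed $R\le Q$, the use of the Goursat data $(Q_H, N_H, \alpha_H)$, and the idea of bringing in the pointwise ergodic theorem for amenable groups are all correct and shared with the paper. Your remark that $[N:\mathrm{N}_N(H)]<\infty$ a.e.\ (via the countability of the $\alpha$-fibers) is also in the paper (Proposition~\ref{prop:an element of an invariant random subgroup has finite orbit for N}).

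\emph{The gap.} Your central step is: truncate $L = H\cap N$ to a F{\o}lner window $\bigoplus_{F_n\cdot x}B$ and ``periodize'' to obtain a finite-index, $Q_1$-invariant subgroup of $N$ approximating $L$, then control the error. You flag this as the delicate point, and it is — but ``periodize the truncation'' has no obvious implementation: a truncation of $L$ is not a subgroup, and even if you force it to be one, there is no reason the periodized object is $Q_1$-invariant for any finite-index $Q_1$, nor that it agrees with $L$ outside the window in a way compatible with the Chabauty topology. The paper's resolution is quite different and this is where the real work lies. Instead of periodizing, the paper passes to the quotient module $\pi_i : B^X \to B^{\overline X_i}$ where $\overline X_i = V_i\backslash X$ and $V_i$ is a complement of $R$ inside $Q_i = R + Q[n_i]$; this makes $Q_i$-invariance automatic by construction, because $V_i$ acts trivially on $B^{\overline X_i}$. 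Then it invokes \emph{Hall's theorem} (residual finiteness of finitely generated metabelian groups, Proposition~\ref{prop:finite index normal subgroups in a metabelian group}) in the auxiliary group $\Gamma_i = Q_i\ltimes B^{\overline X_i}$ to produce a finite-index $\overline N_i$ agreeing with $\pi_i(N_H\cap M_i)$ on a prescribed finite set and containing it; pulling back gives $N_i$. Nothing in your outline supplies this finite-index approximant: ``every subgroup of $B$ is an intersection of finite-index ones'' is far too weak, since what is needed is a simultaneous approximation of $N_H$ as a $\ZZ[Q_i]$-submodule of $N$, and that is precisely what Hall + the module quotient deliver.

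\emph{A secondary structural divergence.} You propose to apply the ergodic theorem for the $Q$-action on $\Sub{N}$ to extract ``almost periodicity'' of $L$. The paper instead applies the ergodic theorem at the level of the $G$-action on $\Sub{G}$ (Theorem~\ref{thm:cosofic subgroup implies cosofic IRS}): it first builds, for a.e.\ $H$, a purely algebraic ``controlled approximation'' $(K_i, M_i, T_i)$, shows via elementary commutator estimates that this upgrades to a Weiss approximation $(K_i, F_i)$, and only then uses the ergodic theorem to conclude that the orbit averages $F_i * H$ converge to $\mu$ for a.e.\ $H$, hence $\mu$ is a limit of the $F_i * K_i\in\IRSfi{G}$. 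Also missing from your plan is the handling of the extension datum $\alpha_H$: the paper needs the ``consistent homomorphisms'' lemma (Lemma~\ref{lem:on kernel of a sequence of homomorphisms and Chabauty}) to lift $\alpha_H$ along the Chabauty-convergent $N_i$, which requires $Q_H$ to be finitely presented (automatic since $Q$ is a f.g.\ abelian group). Without some such device your ``cocycle/section datum'' cannot be matched to finite-index subgroups $K_i$.
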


%

The standard wreath product  $G = B \wr Q$ of the two finitely generated abelian groups $B$ and $Q$ is finitely presented if and only if the group $Q$ is finite \cite{de2006finitely}. 
Therefore Theorems \ref{thm:main theorem for lamplighters} and
  \ref{thm:main theorem} provide  the first known examples of infinitely presented  groups which are permutation stable\footnote{After this paper was written and circulated, an updated version of \cite{zheng2019rigid} came out, showing that the Grigorchuk group (which is amenable and infinitely presented) is also permutation stable.}.  In a subsequent paper \cite{levit2019uncountably} we   show that there exist uncountably many non-solvable such groups.

\begin{remark}
The reader may find it easier to assume that $G$ is the lamplighter group $\ZZ_2 \wr \ZZ$ upon the first reading of this paper, and especially so in  \S\ref{sec:approximation in permutational}.
\end{remark}
%

\subsection*{An explicit example}
For  the sake of illustrating Theorem  	\ref{thm:main theorem} we provide an explicit application. 
Consider the wreath product
$$ \ZZ\wr \ZZ \cong \left<b,t \; | \; [b,b^{t^i}] = \mathrm{id} \; \; \forall i \in \NN \right>.$$
The group $\ZZ \wr \ZZ$ is   infinitely presented \cite[p. 241]{lennox2004theory}. It is permutation stable  by our Theorem   \ref{thm:main theorem}.   A hands-on interpretation of this can be formulated as follows.

\begin{cor} 
\label{cor:application to wreath product}
For every $\varepsilon > 0$ there are some $\delta = \delta(\varepsilon) > 0$ and $k = k (\varepsilon) \in \NN$ with the following property:  for every $n \in \NN$ and every pair of permutations $\beta, \tau \in \mathrm{S}(n)$ satisfying
$$ d_n( [\beta, \beta^{\tau^i}  ], \, \mathrm{id} ) < \delta \quad \forall i \in \{1,\ldots, k\}$$
there exist permutations $b, t \in  \mathrm{S}(n)$ satisfying $[b,  b^{t^i} ] = \mathrm{id}$ \emph{for all} $i \in \NN$   with
$$ d_n(b,\beta) < \varepsilon \quad \text{and} \quad d_n(t,\tau) < \varepsilon.$$
\end{cor}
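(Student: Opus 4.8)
The plan is to deduce Corollary~\ref{cor:application to wreath product} from Theorem~\ref{thm:main theorem} (applied to $G = \ZZ \wr \ZZ$) together with Theorem~\ref{thm:BLT}, by a standard compactness/contradiction argument. First I would observe that the hypothesis of the corollary, read for a whole sequence of scales, is precisely the statement that a sequence of almost-homomorphisms is being produced. Concretely, suppose the conclusion fails. Then there is some $\e > 0$ such that for every $m \in \NN$ (playing the role of both $\delta = 1/m$ and $k = m$) there is an $n_m \in \NN$ and permutations $\beta_m, \tau_m \in \mathrm{S}(n_m)$ with $d_{n_m}([\beta_m, \beta_m^{\tau_m^i}], \mathrm{id}) < 1/m$ for all $i \in \{1, \dots, m\}$, yet no $\e$-close pair $(b,t)$ of permutations in $\mathrm{S}(n_m)$ satisfying all the relations $[b, b^{t^i}] = \mathrm{id}$ exists. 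Passing to this sequence, define maps $f_m \colon \ZZ \wr \ZZ \to \mathrm{S}(n_m)$ by sending the generators $b \mapsto \beta_m$, $t \mapsto \tau_m$ and extending along a fixed choice of normal form for words in $\ZZ \wr \ZZ$; this is the usual way one turns generator data into a set-theoretic map on the whole group.

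The key step is to check that $\{f_m\}$ is an almost-homomorphism of $\ZZ \wr \ZZ$, i.e. that $d_{n_m}(f_m(g)f_m(h), f_m(gh)) \to 0$ for every fixed pair $g, h$. Because $\ZZ \wr \ZZ$ is generated by $b$ and $t$ subject to the relations $[b, b^{t^i}] = \mathrm{id}$ ($i \in \NN$), and since the bi-invariant Hamming metric behaves well under products and conjugation (the word metric estimate: if each defining relator is satisfied to within $\delta$ then any fixed word that is trivial in the group is satisfied to within $C\delta$ for a constant $C$ depending only on that word), the hypothesis $d_{n_m}([\beta_m, \beta_m^{\tau_m^i}], \mathrm{id}) < 1/m$ for $i \le m$ forces, for each fixed relation (eventually $i \le m$), that it is satisfied with error $\to 0$; the standard fact that asymptotic satisfaction of a generating set of relations upgrades to $f_m$ being an almost-homomorphism then applies. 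Here one uses that $\ZZ \wr \ZZ$ is finitely generated even though it is infinitely presented: only finitely many relators are relevant to any fixed pair $g, h$, and those are eventually covered by $i \in \{1, \dots, m\}$.

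Now apply Theorem~\ref{thm:main theorem}: $\ZZ \wr \ZZ$ is a permutational (indeed standard) wreath product of the finitely generated abelian groups $\ZZ$ and $\ZZ$, hence every invariant random subgroup is co-sofic, hence by Theorem~\ref{thm:BLT} the group is permutation stable. Therefore the almost-homomorphism $\{f_m\}$ is close to a genuine sequence of homomorphisms $\rho_m \colon \ZZ \wr \ZZ \to \mathrm{S}(n_m)$, meaning $d_{n_m}(\rho_m(g), f_m(g)) \to 0$ for every $g$. Taking $g = b$ and $g = t$, set $b_m = \rho_m(b)$, $t_m = \rho_m(t) \in \mathrm{S}(n_m)$. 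Since $\rho_m$ is a homomorphism and $[b, b^{t^i}] = \mathrm{id}$ holds in $\ZZ \wr \ZZ$ for all $i \in \NN$, we get $[b_m, b_m^{t_m^i}] = \mathrm{id}$ for all $i \in \NN$; and for $m$ large, $d_{n_m}(b_m, \beta_m) < \e$ and $d_{n_m}(t_m, \tau_m) < \e$. For such $m$ this contradicts the choice of $\beta_m, \tau_m$ as a counterexample, completing the proof.

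**Expected main obstacle.** The only genuinely delicate point is the quantitative passage between ``defining relators satisfied to within $\delta$ for $i \le k$'' and ``$f_m$ is an almost-homomorphism,'' since $\ZZ \wr \ZZ$ is infinitely presented; one must be careful that the constant relating the error in a fixed word $w(b,t)$ to the errors in the relators used to trivialize $w$ depends only on $w$ (hence is absorbed as $m \to \infty$), and that the hypothesis quantified over $i \in \{1, \dots, m\}$ does eventually reach every relator. This is routine but is the step deserving care; everything else is a direct citation of Theorems~\ref{thm:BLT} and~\ref{thm:main theorem} plus the definition of permutation stability.
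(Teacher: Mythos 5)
Your proof is correct; the paper offers no explicit proof of this corollary, treating it as a direct translation of permutation stability for $\ZZ\wr\ZZ$ (Theorems \ref{thm:main theorem} and \ref{thm:BLT}), and your compactness/contradiction argument is exactly the standard unfolding that the paper implicitly relies on. The one technical step you flag (bi-invariance of $d_n$ plus the triangle inequality showing that a fixed trivial word, expressed as a product of $N$ conjugates of relators with indices $i\le I$, has error at most $N\delta$ once $k\ge I$) is handled correctly, so there is no gap.
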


This example illustrates that our results have a very concrete combinatorial meaning. It is perhaps of interest to point out that the proof, and in fact the entire rest of this paper, deals with invariant random subgroups studied  via the methods of ergodic theory. 


\subsection*{Weiss approximated subgroups } 

 Let $G$ be a discrete group and $d$ be any compatible metric on the space of all probability measures on the space of all subgroups of $G$. A  key idea of this paper is    Weiss approximations.

\begin{definition}
A \emph{Weiss approximation} for the subgroup $H$ of $G$ is	a sequence  $(K_i,F_i)$ where for every $i\in\NN$, $K_i  \le G $ is a   finite index subgroup  and $F_i$ is a finite union of disjoint transversals of $\mathrm{N}_G(K_i)$ in $G$   such that
	\begin{equation}
	 \lim_{i\to\infty} d\left(\frac{1}{|F_i|}\sum_{f\in F_i} \delta_{fK_if^{-1}}, \frac{1}{|F_i|}\sum_{f\in F_i} \delta_{fHf^{-1}}\right) = 0.
	\end{equation}
\end{definition}

This notion is valid for any group $G$, but turns out to be especially useful in the amenable case. By using Weiss' work on monotilable   groups \cite{weiss2001monotilable} combined with the Lindenstrauss      point-wise ergodic theorem   for amenable group \cite{lindenstrauss2001pointwise} we show: 

\begin{theorem}
\label{thm:cosofic iff weiss}
Let $G$ be a finitely generated residually finite amenable group. Then $G$ admits a  sequence of finite subsets $F_i$  with the following property: an ergodic invariant random subgroup $\mu$ of  $G$ is co-sofic if and only if $\mu$-almost every subgroup $H$ is Weiss approximated by $(K_i,F_i)$ for some sequence $K_i$ of finite index subgroups.
\end{theorem}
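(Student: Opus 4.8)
The plan is to derive both implications from two external inputs: Weiss's theorem on residually finite amenable groups \cite{weiss2001monotilable}, which supplies the fixed sequence $F_i$, and the Lindenstrauss pointwise ergodic theorem \cite{lindenstrauss2001pointwise}, which controls the right-hand empirical measures. Concretely, I would first fix a decreasing cofinal chain of finite-index normal subgroups $M_i \vartriangleleft G$ with $\bigcap_i M_i = \{e\}$ (take $M_i$ to be the intersection of all subgroups of index $\le i$, finite-index since $G$ is finitely generated, normal, decreasing, cofinal, and with trivial intersection by residual finiteness), and then invoke Weiss to realize this chain by a Følner sequence: a left-Følner sequence $(F_i)$ with each $F_i$ a transversal of $M_i$ in $G$. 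Passing to a subsequence I may assume $(F_i)$ is tempered, which preserves the fact that $(M_i)$ is decreasing and cofinal. Applying the ergodic theorem to the (measure-preserving, ergodic) conjugation action of $G$ on $(\Sub{G},\mu)$ along $(F_i)$ and testing against a countable dense family of continuous functions on the compact metrizable space $\Sub{G}$, one gets that for $\mu$-almost every $H$ the measures $\frac{1}{|F_i|}\sum_{f\in F_i}\delta_{fHf^{-1}}$ converge to $\mu$ in the metric $d$.

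The heart of the argument is a lemma upgrading co-soficity to convergence of a genuine \emph{sequence} of point-mass-type measures. For a finite-index subgroup $K\le G$ let $\nu_K$ be the uniform probability measure on the finite conjugacy class $\{gKg^{-1}:g\in G\}\subseteq\Sub{G}$; this is an ergodic invariant random subgroup supported on finite-index subgroups, and every invariant random subgroup supported on finite-index subgroups is, conjugacy class by conjugacy class, a countable convex combination of such $\nu_K$. Hence if $\mu$ is co-sofic then $\mu$ lies in the compact convex set $C:=\overline{\mathrm{conv}}\{\nu_K : K\le G \text{ of finite index}\}\subseteq\Probs{\Sub{G}}$. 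If in addition $\mu$ is ergodic, then $\mu$ is an extreme point of the (compact, convex) set of all invariant random subgroups, hence an extreme point of the smaller set $C$; by Milman's theorem (the partial converse to Krein--Milman) every extreme point of $C=\overline{\mathrm{conv}}(S)$ lies in $\overline{S}$, so $\mu\in\overline{\{\nu_K : K\le G \text{ of finite index}\}}$. Since this closure is metrizable, there is a sequence $(L_j)$ of finite-index subgroups with $\nu_{L_j}\to\mu$. I expect this Milman step to be the main conceptual obstacle: it is exactly what converts the a priori ``limit of convex combinations'' meaning of co-soficity into a single approximating sequence, and it is where ergodicity of $\mu$ is used in the ``only if'' direction.

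For the ``only if'' direction it remains to reconcile $(L_j)$ with the \emph{fixed} sequence $F_i$, which is the remaining (routine) obstacle. For each $j$ set $r(j):=\min\{i: M_i\le \mathrm{N}_G(L_j)\}$, finite since $[G:\mathrm{N}_G(L_j)]<\infty$ and $(M_i)$ is cofinal; put $i_0=0$, $i_j=\max(r(j),i_{j-1}+1)$, and define $K_i:=L_j$ for $i_j\le i< i_{j+1}$ (and $K_i:=G$ for $i<i_1$). Then $M_i\le M_{i_j}\le M_{r(j)}\le \mathrm{N}_G(K_i)$ on $[i_j,i_{j+1})$, so since $F_i$ is a transversal of $M_i$ it meets every coset of $\mathrm{N}_G(K_i)$ in exactly $[\mathrm{N}_G(K_i):M_i]$ points, hence is a finite disjoint union of transversals of $\mathrm{N}_G(K_i)$; for such $F_i$ the map $f\mapsto fK_if^{-1}$ factors through $G/\mathrm{N}_G(K_i)$ and is a bijection onto the conjugacy class on each transversal, giving $\frac{1}{|F_i|}\sum_{f\in F_i}\delta_{fK_if^{-1}}=\nu_{K_i}$. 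As $K_i=L_{j(i)}$ with $j(i)\to\infty$, we get $\nu_{K_i}\to\mu$, whence for $\mu$-a.e. $H$
\[ d\!\left(\tfrac{1}{|F_i|}\sum_{f\in F_i}\delta_{fK_if^{-1}},\ \tfrac{1}{|F_i|}\sum_{f\in F_i}\delta_{fHf^{-1}}\right)\le d(\nu_{K_i},\mu)+d\!\left(\mu,\ \tfrac{1}{|F_i|}\sum_{f\in F_i}\delta_{fHf^{-1}}\right)\longrightarrow 0, \]
so $(K_i,F_i)$ Weiss approximates $\mu$-a.e. $H$ (with one sequence working simultaneously for all such $H$).

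Conversely, suppose $\mu$-a.e. $H$ admits a Weiss approximation $(K^H_i,F_i)$ with our fixed $F_i$. Fix an $H$ in the $\mu$-conull set on which both this holds and $\frac{1}{|F_i|}\sum_{f\in F_i}\delta_{fHf^{-1}}\to\mu$ holds. Since $F_i$ is a finite union of transversals of $\mathrm{N}_G(K^H_i)$ (part of being a Weiss approximation), the same computation gives $\frac{1}{|F_i|}\sum_{f\in F_i}\delta_{fK^H_if^{-1}}=\nu_{K^H_i}$, and the defining limit together with the triangle inequality yields $\nu_{K^H_i}\to\mu$. As each $\nu_{K^H_i}$ is an invariant random subgroup supported on finite-index subgroups, $\mu$ is co-sofic. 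Beyond the Milman step, the only points requiring care are that Weiss's tiling hypothesis and Lindenstrauss's temperedness hypothesis can be imposed simultaneously (handled by fixing the cofinal chain first, realizing it by Weiss, then passing to a tempered subsequence) and the elementary counting behind ``transversal of $M_i$ with $M_i\le \mathrm{N}_G(K_i)$ implies a union of transversals of $\mathrm{N}_G(K_i)$''.
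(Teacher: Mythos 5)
Your proposal is correct and follows the same overall architecture as the paper: build the fixed Følner sequence $F_i$ of transversals to a cofinal chain of finite-index normal subgroups via Weiss's theorem, pass to a tempered subsequence, use the Lindenstrauss pointwise ergodic theorem to produce the $\mu$-conull set where $\frac{1}{|F_i|}\sum_{f\in F_i}\delta_{fHf^{-1}}\to\mu$, reduce co-soficity of the ergodic $\mu$ to the existence of a sequence $\nu_{L_j}\to\mu$ of conjugacy-class measures of finite-index subgroups, and then align the $L_j$'s with the fixed $F_i$'s by repetition. The one genuinely different choice is how you extract the sequence $(L_j)$. The paper proves this as Proposition \ref{prop:if ergodic weak-* limit then is weak* limit of ergodic} by a hands-on separation argument: assuming $\mu\notin\overline{\IRSfe{G}}$, it picks a continuous $f$ witnessing the separation, splits $\IRSfe{G}=F_+\cup F_-$ according to whether $\lambda(f)$ overshoots or undershoots $\mu(f)$, and takes a limit of convex decompositions to exhibit $\mu$ as a nontrivial convex combination, contradicting ergodicity. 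You instead observe that $C=\overline{\mathrm{conv}}\{\nu_K\}$ is a compact convex subset of $\IRS{G}$ containing $\mu$, that $\mu$ is extreme in $C$, and invoke Milman's partial converse to Krein--Milman to conclude $\mu\in\overline{\{\nu_K\}}$. This is cleaner and correctly identifies the abstract principle at work (indeed the paper's ad hoc argument is essentially a proof of Milman's theorem in this special case); the paper's version is self-contained and avoids a citation, which is all that distinguishes them. Your explicit bookkeeping with $r(j)$ and $i_j$ is just a precise rendering of the paper's informal ``up to repeating each $K_i$ finitely many times,'' and the counting step showing that a transversal of $M_i$ with $M_i\le\mathrm{N}_G(K_i)$ is a finite union of transversals of $\mathrm{N}_G(K_i)$ is the same elementary fact the paper uses via Proposition \ref{prop:finite index subgroups and transversals}.
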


The above result follows as a  combination of our  two  Theorems \ref{thm:cosofic subgroup implies cosofic IRS} and \ref{thm:converse on cosofic for residually finite} below.  We are then able to use Theorem \ref{thm:cosofic iff weiss} towards our main result by constructing suitable  Weiss approximations.

\subsection*{Outline of the paper}
Let us briefly outline the  structure of the paper.
 In \S\ref{sec:random subsets} we recall the notion of an invariant random subgroup of   $G$ and describe some of its properties. This is followed by \S\ref{sec:weiss approximable subgroups} where we introduce  and discuss Weiss approximations. In \S\ref{sec:Goursat} we study Folner sequences and transversals in group extensions.

The rest of the paper  is dedicated to a detailed  analysis of the subgroups and invariant random subgroups of metabelian groups. The results of   \S\ref{sec:subs and irs of metabelian} and \S\ref{sec:controlled approximations}
  hold true for all metabelian groups, while starting from \S\ref{sec:permutational metabelian groups} we restrict our attention to what we call   \enquote{permutational metabelian groups}. 
  
The   construction of   Weiss approximations is performed   in \S\ref{sec:approximation in permutational} only for the split case. It amounts to the careful construction of    Folner sequences of transversals that satisfify a certain statistical property of being \enquote{adapted}, see Definition \ref{def:adapted sequence}.

\subsection*{Open problems and questions}

We have enough evidence to believe that Theorem \ref{thm:main theorem} could be true for all permutational metabelian groups. This would imply, for instance, that finitely generated free metabelian groups are permutation stable. 

The classical result of Phillip Hall \cite{hall1959finiteness} asserts that every metabelian group is residually finite. Wishful thinking seems to  suggest  that all metabelian groups are permutation stable. If true, this   would be a far reaching strengthening of Hall's theorem. Note that the smallest residually finite solvable group known  to be non permutation stable   is the $3$-step solvable Abels' group \cite{becker2019stability}.   

More generally, a permutational wreath product $B \wr_X Q$ of two arbitrary groups is residually finite if and only if both $Q$ and $B$ are residually finite and either $B$ is abelian or $Q$ is finite \cite{grunberg1957residual}.
A wreath product  of two amenable groups is again amenable and hence sofic   \cite{weiss2000sofic}. A sofic group which is    permutation stable   must be residually finite   \cite{glebsky2009almost}.
Therefore if $B \wr_X Q$ is sofic and permutation stable then, unless $Q$ is finite, the base group $B$ must be abelian.  
 The moral  is that   while our Theorem \ref{thm:main theorem} might not be as general as possible in the sense that $Q$ could potentially be replaced by other amenable groups,  at the very least    the base group $B$  has to be abelian.
 
The  wreath product of two finitely generated abelian groups is known to be locally extended residually finite (LERF) by \cite{alperin2006metabelian}. Therefore our Theorem \ref{thm:main theorem}   gives a partial answer in the direction suggested by \cite[Question 8.6]{becker2019stability}, namely is every LERF amenable group permutation stable?

\vspace{10pt}
\emph{We follow the convention $x^y = y^{-1} x y$ for conjugation and $[x,y]  = x^y x^{-1}$ for the commutator. The following  two identities
 \begin{equation}
 \label{eq:standard commutator identities}
  \left[xy,z\right] = \left[x,z\right] ^y \left[y,z\right] \quad \text{and} \quad  \left[x,yz\right] = \left[x,z\right]\left[x,y\right]^z\end{equation}
will be frequently   used throughout this text.}

\subsection*{Acknowledgements}
The authors would like to thank Oren Becker for stimulating discussions and in particular for bringing to our attention Corollary \ref{cor:residually finite amenable has Foler sequence of finite to one transverals}. 

The first named author is indebted for support from the NSF. The second named author   is indebted for support from the NSF, the ISF and the European
Research Council (ERC) under the European Union's Horizon 2020
research and innovation program (grant agreement No. 692854).
%
%
%


\section{Random subsets and subgroups}
\label{sec:random subsets}

Let $G$ be a countable discrete group. We recall  the Chabauty space   of all subgroups of $G$ and   define invariant random subgroups of $G$. We then investigate the notion of co-sofic invariant random subgroups in some detail.

\subsection*{Space of subsets}

Consider the \emph{power set} of $G$
$$ \Pow{G} = \{0,1\}^\Gamma$$ 
equipped with the Tychonoff product topology. The space $\Pow{G}$ is compact and metrizable. Indeed, given an arbitrary enumeration $G = \{g_1, g_2, \ldots\}$, the  metric $d_{\Pow{G}}$ given by
\begin{equation}
 d_{\Pow{G}}(A,B) = \sum_{n=1}^\infty \frac{\mathrm{1}_{A \triangle B}(g_n)}{2^n} 
 \end{equation}
for every two subsets $A,B \in \Pow{G}$ induces  the topology. 

%

The group $G$ acts on its power set  $\Pow{G}$ by homeomorphisms via conjugation. We denote this action by $c_g$. Given an element $g \in G$ and a subset $A \subset G$ denote
$$c_g A = A^{g^{-1}} =   gAg^{-1}. $$

\begin{definition}
	\label{def:exhausting sequence}
A sequence $F_i \in \Pow{G}$  is \emph{exhausting} if     $F_i$ Chabauty converges to the point $G \in \Pow{G}$.
\end{definition}

\subsection*{Chabauty space of subgroups}

Consider the following subset of $\Pow{G}$
$$\Sub{G} = \{H \le G \: : \: \text{$H$ is a subgroup of $G$} \}.$$ 
The space $\Sub{G}$ is called the \emph{Chabauty space} of the group $G$. The space $\Sub{G}$ is compact since it is a  closed subset of $\Pow{G}$. Moreover $\Sub{G}$ is preserved by the conjugation action $c_g$ of $G$.

Let $\Subd{G}{d}$ denote the subset of $\Sub{G}$ consisting of all subgroups of index at most $d$ in $G$. Denote 
$$\Subfi{G} = \bigcup_{d \in \NN}\Subd{G}{d}$$ so that $\Subfi{G}$ consists of all the finite index subgroups of $G$. Of course, the subset $\Subfi{G}$ need not  be closed in general.


\subsection*{Spaces of probability measures} 

Let $\Probs{G}$ be the space of all Borel probability measures on the   set $\Sub{G}$. This is a compact   space with the weak-$*$ topology according to the Banach--Alaoglu theorem. The  conjugation action  of $G$ on its Chabauty space  $\Sub{G}$ gives rise to a corresponding push-forward action  of $G$ on the space $\Probs{G}$. We continue using the notation  $c_g$ for this push-forward action. 
\begin{lemma}
	\label{lem:lemma on IRS convergence}
	A sequence $\mu_n \in \Probs{G}$   weak-$*$ converges to  $ \mu \in \Probs{G}$ if and only if for every pair of  finite subsets $A,B \subset G$     we have    
	\begin{equation}
	 \mu_n(E_{A,B}) \xrightarrow{n\to\infty} \mu(E_{A,B}) 
	 \end{equation}
	where
	\begin{equation}
	E_{A,B} = \{ H \le G \: : \: \text{$A \subset H$ and $B \subset G\setminus H$}  \}. 
	\end{equation}
\end{lemma}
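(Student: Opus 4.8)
The plan is to identify the events $E_{A,B}$ with the basic clopen cylinders of the Chabauty space $\Sub{G}$, viewed as a closed subset of $\Pow{G} = \{0,1\}^G$, and then to test weak-$*$ convergence against the countable family of these cylinders, which spans a dense subalgebra of $C(\Sub{G})$. Throughout one may assume $A \cap B = \emptyset$, since otherwise $E_{A,B} = \emptyset$ and both sides of the claimed limit are $0$. The first observation is that membership of a subgroup $H$ in $E_{A,B}$ depends only on the restriction of (the indicator function of) $H$ to the finite set $A \cup B$; hence both $E_{A,B}$ and its complement are open in $\Sub{G}$, so $E_{A,B}$ is clopen and $\mathbbm{1}_{E_{A,B}}$ is a continuous function on $\Sub{G}$. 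This immediately gives the forward implication: if $\mu_n \to \mu$ in the weak-$*$ topology, then $\mu_n(E_{A,B}) = \int \mathbbm{1}_{E_{A,B}}\dd\mu_n \to \int \mathbbm{1}_{E_{A,B}}\dd\mu = \mu(E_{A,B})$.

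For the converse I would work with the linear span $\mathcal{D} \subseteq C(\Sub{G})$ of the functions $\mathbbm{1}_{E_{A,B}}$ over all finite disjoint $A, B \subset G$. One checks that $\mathcal{D}$ is a unital subalgebra: $\mathbbm{1}_{E_{\emptyset,\emptyset}} \equiv 1$ since $E_{\emptyset,\emptyset} = \Sub{G}$, and $\mathbbm{1}_{E_{A,B}} \cdot \mathbbm{1}_{E_{A',B'}} = \mathbbm{1}_{E_{A \cup A',\, B \cup B'}}$, where the right-hand side is interpreted as $0$ when $A \cup A'$ meets $B \cup B'$. It also separates points of $\Sub{G}$: if $H_1 \neq H_2$, pick $g$ in their symmetric difference, say $g \in H_1 \setminus H_2$, and then $\mathbbm{1}_{E_{\{g\},\emptyset}}$ takes the value $1$ at $H_1$ and $0$ at $H_2$. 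Since $\Sub{G}$ is compact Hausdorff, Stone--Weierstrass shows that $\mathcal{D}$ is dense in $C(\Sub{G})$. Now, assuming $\mu_n(E_{A,B}) \to \mu(E_{A,B})$ for all finite disjoint $A,B$, linearity yields $\int f\dd\mu_n \to \int f\dd\mu$ for every $f \in \mathcal{D}$; and since all the $\mu_n$ and $\mu$ are probability measures, a standard $\varepsilon/3$ approximation (bounding $\lvert\int f\dd\mu_n - \int f\dd\mu\rvert$ by $2\|f-g\|_\infty + \lvert\int g\dd\mu_n - \int g\dd\mu\rvert$ for a suitable $g \in \mathcal{D}$) upgrades this to all $f \in C(\Sub{G})$, which is exactly weak-$*$ convergence $\mu_n \to \mu$.

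I do not expect a serious obstacle here; the only points that want a little care are the bookkeeping showing that $\mathcal{D}$ is genuinely closed under multiplication (the empty-intersection convention) and that point separation is verified inside $\Sub{G}$ rather than in $\Pow{G}$. If one would rather not invoke Stone--Weierstrass, an equivalent route is to note that the sets $E_{A,B}$ form a clopen basis of the compact space $\Sub{G}$, so every clopen set is a finite disjoint union of such cylinders and every $f \in C(\Sub{G})$ is a uniform limit of finite linear combinations of indicators of clopen sets; the same $\varepsilon/3$ argument then finishes the proof.
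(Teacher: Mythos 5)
Your proof is correct, and the converse direction takes a genuinely different route from the paper. The forward direction is essentially the paper's: both rest on the observation that $E_{A,B}$ is clopen (you note its membership depends only on finitely many coordinates; the paper notes it is a cylinder), so convergence of the integrals of the continuous indicator is immediate --- the paper phrases this via the Portmanteau theorem, but the substance is identical. For the converse, the paper cites Billingsley's criterion for convergence-determining classes: the sets $E_{A,B}$ form a countable $\pi$-system that is also a basis for the topology of $\Sub{G}$, and the underlying mechanism is inclusion--exclusion. You instead pass to the function side: the linear span $\mathcal{D}$ of the indicators $\mathbbm{1}_{E_{A,B}}$ is a unital point-separating subalgebra of $C(\Sub{G})$ (your bookkeeping on $\mathbbm{1}_{E_{A,B}} \cdot \mathbbm{1}_{E_{A',B'}} = \mathbbm{1}_{E_{A\cup A', B\cup B'}}$ with the empty-intersection convention is exactly right), so Stone--Weierstrass gives density, and the $\varepsilon/3$ estimate using $\mu_n(\Sub{G}) = \mu(\Sub{G}) = 1$ upgrades pointwise convergence on $\mathcal{D}$ to weak-$*$ convergence. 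The two converses are of comparable difficulty; yours is self-contained and functional-analytic, needing no black box beyond Stone--Weierstrass, whereas the paper stays entirely within the measure-theoretic framework already invoked by the Portmanteau step. Your alternative remark that the $E_{A,B}$ form a clopen basis and every clopen set is a finite disjoint union of them is also a valid elementary route, and closer in spirit to the inclusion--exclusion behind the paper's cited theorem.
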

\begin{proof}
Given a pair of finite subsets $A,B \subset G$, the subset $E_{A,B} $  is the intersection of a \enquote{cylinder set} in the Tychonoff product space $\Pow{G}$ with the Chabauty subspace  $ \mathrm{Sub}(G)$. Therefore $E_{A,B}$ is clopen.  According to the Portmanteau theorem \cite[Theorem 2.1]{billingsley2013convergence}, the weak-$*$ convergence $\mu_n \to \mu$ implies   $\mu_n(E_{A,B}) \to \mu(E_{A,B})$. 

The family of all subsets $E_{A,B}$ with $A$ and $B$ ranging over finite subsets of the group $G$ is a countable basis for the topology of $\mathrm{Sub}(G)$. In addition, the family of subsets $E_{A,B}$ is closed under finite intersections. These are precisely the two conditions needed for   \cite[Theorem 2.2]{billingsley2013convergence} which amounts to the converse direction of our lemma. The proof of that theorem is essentially an inclusion-exclusion formula.
\end{proof}

The space $\Probs{G}$ is metrizable. In fact, it follows from Lemma \ref{lem:lemma on IRS convergence} that if $A_i$ and $ B_i$ is an arbitrary  enumeration of all pairs of   finite subsets of $G$ then 
\begin{equation}
\label{eq:metric on P}
 d_{\Probs{G}}(\mu,\nu) = \sum_{i \in \NN}\frac{|\mu(E_{A_i,B_i}) - \nu(E_{A_i,B_i})|}{2^i} \quad  \mu,\nu \in \Probs{G}
 \end{equation}
is a compatible metric for the topology of $\Probs{G}$.

\subsection*{Invariant random subgroups}
Denote
$$ \IRS{G} = \{ \mu \in \Probs{G} \: : \: \text{ $c_g \mu = \mu$ for all $g \in G$} \}.$$
Note that $\IRS{G}$ is a weak-$*$ closed and hence a compact subset of $\Probs{G}$. An element $\mu \in \IRS{G}$ is called an \emph{invariant random subgroup}  \cite{abert2014kesten}.

An invariant random subgroup $\mu \in \IRS{G}$ is \emph{ergodic} if every convex combination $\mu = t\mu_1 + (1-t)\mu_2$ with $\mu_1,\mu_2 \in \IRS{G}$ and $t \in \left(0,1\right)$ is trivial in the sense that $\mu_1 = \mu_2$. In other words, the ergodic invariant random subgroups $\IRSerg{G}$ are the extreme points of the compact convex set $\IRS{G}$. 

The compact convex space $\IRS{G}$ is a \emph{Choquet simplex}. This means that every $\mu \in \IRS{G}$ admits an \emph{ergodic decomposition}, namely   there is a unique probability measure $\nu_\mu$ on the Borel set $\IRSerg{G}$ satisfying
\begin{equation}
\label{eq:ergodic dec}
 \mu(f)  = \int_{\IRSerg{G}} f \; \mathrm{d} \nu_\mu
 \end{equation}
for every continuous function $ f$ on the compact   space $\Sub{G}$. To interpret this formula 
regard $f$ as a weak-$*$ continuous function on $\IRS{G}$. The left-hand side of   Equation (\ref{eq:ergodic dec}) now reads $f(\mu)$ while the right-hand side reads $\nu_\mu(f)$. 
 The reader is referred to  \cite[\S12]{phelps2001lectures} for details. 

\subsection*{Co-sofic invariant random subgroups} Let $\IRSfi{G}$ denote the subspace consisting of all $\mu \in \IRS{G}$ satisfying $\mu(\Subfi{G}) = 1$. Clearly $\IRSfi{G}$ is convex. Note that $\mu \in \IRSfi{G}$ is ergodic if and only if $\mu$ is a uniform atomic probability measure supported on the conjugacy class of some finite index subgroup $H \le G$.

\begin{definition}
\label{def:cosofic IRS}
The invariant random subgroup $\mu \in \IRS{G}$ is \emph{co-sofic} if 
$$\mu \in \overline{\IRSfi{G}}^{\text{weak-$*$}}.$$
\end{definition}

The collection   of all co-sofic invariant random subgroups is convex and weak-$*$ closed, being the closure of a convex set. 

The  important notion of \emph{sofic groups} was  introduced by Gromov in  \cite{gromov1999endomorphisms}. The name \emph{sofic} was coined by Weiss in \cite{weiss2000sofic}. For our purposes, it will suffice to state an equivalent definition, given in terms of invariant random subgroups.     If $G$ is a free group and $N \nrm G$ is a normal subgroup then the   quotient group $G/N$ is \emph{sofic} if and only if the invariant random subgroup $\delta_N$ is co-sofic.  See   \cite[\S2.5]{gelander2015lecture} for details about this fact. Every amenable   as well as every residually finite group is sofic.
 
 \begin{remark}
 \label{remark:cosofic}
Let $G$ be any group with normal subgroup $N$. 
One direction of the above statement holds true in general. Namely, if   $\delta_N$ is co-sofic then $G/N$ is sofic. This can be seen by lifting  to the free group.
 The converse implication   is false in general. For example, the invariant random subgroup $\delta_{\{e\}}$ is   co-sofic if and only if $G$ is residually finite,  and some sofic groups are not residually finite.  However, in the special case where $G/N$ is residually finite then $\delta_N$ is indeed co-sofic.
 \end{remark}
 
The following result shows that when studying co-sofic invariant random subgroups we may in some sense restrict our attention to ergodic ones.



\begin{prop}
\label{prop:enough to prove co-sofic for ergodic}
 Let $\mu \in \IRS{G}$ be an invariant random subgroup with ergodic decomposition   $\nu_\mu \in \mathcal{P}(\IRSerg{G})$.   If $\nu_\mu$-almost every $\lambda \in \IRSerg{G}$ is co-sofic then     $\mu$ is co-sofic.
\end{prop}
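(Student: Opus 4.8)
The plan is to deduce the statement from the fact that the set of co-sofic invariant random subgroups is convex and weak-$*$ closed, combined with the ergodic decomposition formula (\ref{eq:ergodic dec}). I would first set $\mathcal{C} = \overline{\IRSfi{G}}^{\text{weak-$*$}}$, the collection of all co-sofic invariant random subgroups, which as noted in the excerpt is a convex weak-$*$ closed subset of $\IRS{G}$. The hypothesis says $\nu_\mu(\mathcal{C} \cap \IRSerg{G}) = 1$. The goal is to show $\mu \in \mathcal{C}$.

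Next I would express $\mu$ as a barycenter of $\nu_\mu$ and approximate. Using the metric $d_{\Probs{G}}$ from (\ref{eq:metric on P}), fix $\e > 0$ and choose $N$ so large that the tail $\sum_{i > N} 2^{-i} < \e$. For each of the finitely many clopen sets $E_{A_i,B_i}$ with $i \le N$, formula (\ref{eq:ergodic dec}) applied to the indicator $\mathbbm{1}_{E_{A_i,B_i}}$ (which is continuous on $\Sub{G}$, hence weak-$*$ continuous on $\IRS{G}$) gives $\mu(E_{A_i,B_i}) = \int_{\IRSerg{G}} \lambda(E_{A_i,B_i}) \dd\nu_\mu(\lambda)$. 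Since $\nu_\mu$-almost every $\lambda$ lies in $\mathcal{C}$, I would approximate the measure $\nu_\mu$ itself by a finitely supported probability measure $\sum_{j} t_j \delta_{\lambda_j}$ on ergodic co-sofic points, controlling the $N$ coordinate functionals; the resulting finite convex combination $\nu = \sum_j t_j \lambda_j$ then satisfies $d_{\Probs{G}}(\mu, \nu) < \e'$ for a prescribed $\e'$. Because each $\lambda_j \in \mathcal{C}$ and $\mathcal{C}$ is convex, $\nu \in \mathcal{C}$; letting $\e' \to 0$ and using that $\mathcal{C}$ is weak-$*$ closed yields $\mu \in \mathcal{C}$.

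An alternative, cleaner route that avoids the ad hoc finite approximation: regard the assignment $\lambda \mapsto \lambda$ as a Borel map $\IRSerg{G} \to \IRS{G}$ whose image (up to a $\nu_\mu$-null set) lies in the convex weak-$*$ closed set $\mathcal{C}$. The barycenter of a probability measure concentrated on a closed convex subset $\mathcal{C}$ of a compact convex set in a locally convex space again lies in $\mathcal{C}$ — this is a standard fact about barycenters, e.g. from the theory in \cite{phelps2001lectures}. Since $\mu$ is precisely that barycenter by (\ref{eq:ergodic dec}), we conclude $\mu \in \mathcal{C}$, i.e. $\mu$ is co-sofic. I would likely present this second argument as the main line and mention the first as the hands-on unpacking.

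The only real subtlety — and the step I would be most careful about — is the measurability and the legitimacy of the barycenter manipulation: one must know that $\mathcal{C} \cap \IRSerg{G}$ is a Borel set (so that the hypothesis $\nu_\mu$-a.e.\ makes sense) and that a probability measure supported on the closed convex set $\mathcal{C}$ has its barycenter in $\mathcal{C}$. The former follows since $\mathcal{C}$ is closed and $\IRSerg{G}$ is Borel (the extreme points of a metrizable Choquet simplex form a Borel, indeed $G_\delta$, set); the latter is the standard separation/Hahn--Banach argument: if the barycenter were outside $\mathcal{C}$ it could be strictly separated from $\mathcal{C}$ by a continuous affine functional, contradicting that the functional's average over a measure supported in $\mathcal{C}$ equals its value at the barycenter. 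Everything else is routine bookkeeping with the explicit metric (\ref{eq:metric on P}).
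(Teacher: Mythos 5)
Your proof is correct and takes essentially the same route as the paper: the paper's key step, namely that $\mu$ lies in $\overline{\mathrm{conv}(\mathrm{supp}(\nu_\mu))}^{\text{weak-}*}$ which in turn sits inside the closed convex set of co-sofic invariant random subgroups, is precisely the barycenter-in-closed-convex-set fact you invoke in your ``cleaner'' second route. Your first, hands-on finite-approximation variant is just an unpacking of that same fact via the explicit metric, so there is no genuine difference in approach.
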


\begin{proof}

Assume that $\nu_\mu$-almost every $\lambda \in \IRSerg{G}$ is co-sofic, so that
 $$ \mathrm{supp}(\nu_\mu) \subset \overline{\IRSfi{G}}^{\text{weak-$*$}}.$$
 Since the collection of all co-sofic invariant random subgroups is closed and convex we deduce that
 $$ \mu \in \overline{\mathrm{conv}(\mathrm{supp}(\nu_\mu))}^{\text{weak-$*$}} \subset \overline{\IRSfi{G}}^{\text{weak-$*$}}$$
 as required.
\end{proof}

The following  observation follows immediately from Proposition \ref{prop:enough to prove co-sofic for ergodic}.

\begin{cor}
\label{cor:enough to prove co-sofic for ergodic}
If    every    $\mu \in \IRSerg{G}$ is co-sofic then every $\mu \in \IRS{G}$ is co-sofic as well.
\end{cor}



Ergodic co-sofic invariant random subgroups admit a very explicit description.

\begin{prop}
\label{prop:if ergodic weak-* limit then is weak* limit of ergodic}
Let  $\mu \in \IRS {G}$ be ergodic and co-sofic. Then $\mu = \lim \nu_i$ for some ergodic $\nu_i \in \IRSfi{G}$.
\end{prop}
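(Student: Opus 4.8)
The plan is to start from the definition of co-soficity: since $\mu$ is co-sofic, $\mu = \lim_i \mu_i$ (weak-$*$) for some sequence $\mu_i \in \IRSfi{G}$. Each $\mu_i$ need not be ergodic, but $\IRSfi{G}$ is convex and, as noted right before the statement, its ergodic elements are exactly the uniform atomic measures on conjugacy classes of finite index subgroups. By the ergodic decomposition (Choquet simplex) applied to each $\mu_i$, we may write $\mu_i$ as a (possibly infinite) convex combination $\mu_i = \int_{\IRSerg{G}} \lambda \,\mathrm{d}\nu_{\mu_i}(\lambda)$, where $\nu_{\mu_i}$ is supported on those ergodic finite-index measures. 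So it suffices to show that $\mu$, being an ergodic point which is a weak-$*$ limit of \emph{mixtures} of ergodic members of $\IRSfi{G}$, is in fact a weak-$*$ limit of ergodic members of $\IRSfi{G}$ themselves.

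The key tool I would invoke is a standard fact about Choquet simplices (see \cite{phelps2001lectures}): in a metrizable Choquet simplex $K$, the extreme points $\mathrm{ext}(K)$ form a $G_\delta$ set, and more to the point, if an extreme point $\mu$ lies in the closed convex hull of a set $S \subseteq K$, then $\mu$ lies in the closure of $\mathrm{ext}(K) \cap \overline{S}$ — roughly, extreme points of the ambient simplex that are approximated by a convex set must be approximated by extreme points sitting in the closure of that set. Here I would take $K = \IRS{G}$, which the excerpt has already told us is a Choquet simplex, $S = \IRSfi{G}$, and $\mu \in \mathrm{ext}(K) = \IRSerg{G}$ the given ergodic co-sofic measure. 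Since $\mu \in \overline{\IRSfi{G}} = \overline{\mathrm{conv}(\IRSfi{G}^{\mathrm{erg}})}$ and the ergodic elements of $\overline{\IRSfi{G}}$ that are genuinely extreme in $\IRS{G}$ are limits of ergodic elements of $\IRSfi{G}$, we conclude $\mu = \lim \nu_i$ with $\nu_i \in \IRSfi{G}$ ergodic, as claimed. Concretely one argues: $\mu$ extreme and $\mu \in \overline{\mathrm{conv}(\IRSfi{G}^{\mathrm{erg}})}$; pick $\mu_i \to \mu$ in $\mathrm{conv}(\IRSfi{G}^{\mathrm{erg}})$ and look at their ergodic decompositions $\nu_{\mu_i} \to \delta_\mu$ (this convergence of the decomposing measures follows from uniqueness of ergodic decomposition together with weak-$*$ continuity of the barycenter map on a metrizable simplex); then for each $i$ choose $\nu_i \in \mathrm{supp}(\nu_{\mu_i})$ with $d_{\Probs{G}}(\nu_i,\mu)$ small, which is possible precisely because $\nu_{\mu_i}$ concentrates near $\mu$.

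The main obstacle is making the step "$\nu_{\mu_i} \to \delta_\mu$ forces a choice of $\nu_i \in \mathrm{supp}(\nu_{\mu_i})$ converging to $\mu$" fully rigorous, i.e. getting from weak-$*$ convergence of the decomposing measures $\nu_{\mu_i}$ on the (not necessarily closed) Borel set $\IRSerg{G}$ to an actual sequence of atoms. The clean way is: extend each $\nu_{\mu_i}$ to a measure on the compact space $\Probs{G}$, pass to a weak-$*$ limit $\nu_\infty$ along a subsequence, use that the barycenter of $\nu_{\mu_i}$ is $\mu_i \to \mu$ and barycenter is weak-$*$ continuous, hence $\nu_\infty$ has barycenter $\mu$; since $\mu$ is extreme, $\nu_\infty = \delta_\mu$; therefore $\nu_{\mu_i} \to \delta_\mu$, and in particular for every neighborhood $U$ of $\mu$ we eventually have $\nu_{\mu_i}(U) > 0$, so $\mathrm{supp}(\nu_{\mu_i}) \cap U \neq \emptyset$ and we can pick the desired $\nu_i$. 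One should double-check that $\mathrm{supp}(\nu_{\mu_i})$ really consists of ergodic finite-index measures — this is exactly the remark preceding the proposition that ergodic elements of $\IRSfi{G}$ are the uniform atomic measures on conjugacy classes of finite-index subgroups, so every $\nu_i$ we extract is a bona fide ergodic element of $\IRSfi{G}$, completing the argument.
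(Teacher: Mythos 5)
Your proof is correct and takes a genuinely different route from the paper's. You run a barycenter/Choquet-theory argument: treat the ergodic-decomposing measures $\nu_{\mu_i}$ as Borel probability measures on the compact space $\IRS{G}$, pass to a weak-$*$ limit point $\nu_\infty$ by compactness, invoke weak-$*$ continuity of the barycenter map together with the fact that in a metrizable compact convex set the only representing measure of an extreme point is the Dirac mass (cf.\ \cite{phelps2001lectures}) to conclude $\nu_\infty = \delta_\mu$, hence $\nu_{\mu_i} \to \delta_\mu$, and extract atoms near $\mu$. The paper instead argues by contradiction and pure separation: assuming $\mu \notin \overline{\IRSfe{G}}^{\text{weak-$*$}}$, it chooses a continuous $f$ with $|\mu(f)-\lambda(f)| > \varepsilon$ for all $\lambda \in \IRSfe{G}$, splits $\IRSfe{G} = F_+ \cup F_-$ by the sign of $\lambda(f)-\mu(f)$, notes $\mu$ lies in neither closed convex hull, and shows the weak-$*$ limit of $\mu_i \in \mathrm{conv}(F_+ \cup F_-)$ forces $\mu$ to be a non-trivial convex combination of a point of $\overline{\mathrm{conv}(F_+)}$ with a point of $\overline{\mathrm{conv}(F_-)}$, contradicting ergodicity. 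The paper's route is more elementary and self-contained (only weak-$*$ compactness, convexity, and separation), while yours is conceptually cleaner but imports a nontrivial extreme-point/barycenter uniqueness fact. One small slip at the end of your writeup: you pick $\nu_i$ from $\mathrm{supp}(\nu_{\mu_i}) \cap U$ and assert this support consists of ergodic finite-index measures; but the topological support is the \emph{closure} of the set of atoms of $\nu_{\mu_i}$, and that closure may well contain points outside $\IRSfe{G}$. The remedy is immediate and does not use the support at all: since $\nu_{\mu_i}(\IRSfe{G}) = 1$, the Portmanteau bound $\nu_{\mu_i}(U) > 0$ gives $\nu_{\mu_i}(U \cap \IRSfe{G}) > 0$, hence $U \cap \IRSfe{G} \neq \emptyset$ and you pick $\nu_i$ there instead.
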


\begin{proof}
To ease our notations denote $\IRSfe{G} = \IRSfi{G} \cap \IRSerg{G}$. The invariant random subgroup $\mu$ is co-sofic. Therefore we may write
\begin{equation} 
\label{eq:cosofic and fi}
\mu = \lim_i \mu_i   \quad \text{where} \quad \mu_i \in  \IRSfi{G} = \mathrm{conv} \left( \IRSfe{G} \right)  \quad \forall i\in\NN. 
\end{equation}

Assume towards contradiction that 
$$ \mu \notin \overline{\IRSfe{G}}^{\text{weak-$*$}}.$$
By  the definition of weak-$*$ convergence, this means that there is some real-valued continuous function $f \in \mathrm{C}(\Sub{G})$ and some $\varepsilon > 0$ such that 
$$ |\mu(f) - \lambda(f)| > \varepsilon \quad \forall \lambda \in \IRSfe{G}.$$
Consider the decomposition $ \IRSfe{G} = F_+ \cup F_- $  into two disjoint Borel sets with
$$ \lambda(f) > \mu(f) + \varepsilon \quad \forall \lambda \in F_+ \quad \text{and} \quad \lambda(f) < \mu(f) - \varepsilon \quad \forall \lambda \in F_-.
$$
Note that
\begin{equation}
\label{eq:not in conv of pair}
\mu \notin \overline{\mathrm{conv}(F_+)}^{\text{weak-$*$}} \cup \overline{\mathrm{conv}(F_-)}^{\text{weak-$*$}}.
\end{equation}
The two Equations (\ref{eq:cosofic and fi}) and (\ref{eq:not in conv of pair}) put together  imply that both subsets $F_+$ and $F_-$ are non-empty.

The above decomposition of the set $\IRSfe{G}$ allows us to express each invariant random subgroup $\mu_i$  as a convex combination in the following way
$$ \mu_i \in \mathrm{conv}(F_+ \cup F_-) = \mathrm{conv}(\mathrm{conv}(F_+) \cup \mathrm{conv}(F_-)) \quad \forall i \in \mathbb{N}.$$
Taking the weak-$*$ limit $\mu = \lim \mu_i$ gives
\begin{equation}
\label{eq:another expression for mu}
 \mu \in \mathrm{conv}(\overline{\mathrm{conv}(F_+)}^{\text{weak-$*$}} \cup \overline{\mathrm{conv}(F_-)}^{\text{weak-$*$}}).
 \end{equation}
Putting together the two Equations (\ref{eq:not in conv of pair}) and (\ref{eq:another expression for mu}) implies that $\mu$ is necessarily  a non-trivial convex combination. This is a contradiction to the ergodicity of $\mu$.
%
%
\end{proof}

%

\subsection*{Chabauty spaces of subgroups and quotients}

Let $H$ be any subgroup of $G$. There is a continuous \emph{restriction map}  given by
$$ \cdot_{|H} : \Pow{G} \to \Pow{H}, \quad A \mapsto A_{|H} = A \cap H \quad \forall A \subset G. $$
It is clear that $ \Sub{G}_{|H} = \Sub{H}$. Similarly
$$ \Subd{G}{d}_{|H} \subset \Subd{H}{d} \quad \forall d \in \NN \quad \text{and} \quad \Subfi{G}_{|H} \subset \Subfi{H}.$$
The restriction map is $H$-equivariant for the conjugation action   of the subgroup $H$.
Pushing-forward via the restriction determines a map
$$ \cdot_{|H} : \Probs{G} \to \Probs{H}, \quad \mu \mapsto \mu_{|H} \quad \forall \mu \in \Probs{G}.$$

\begin{prop}
\label{prop:restriction of an IRS is an IRS}
If $H$ is a subgroup of $G$ then $\IRS{G}_{|H} \subset \IRS{H}$.
\end{prop}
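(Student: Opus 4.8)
The plan is to show that, for $\mu\in\IRS{G}$, the pushed-forward measure $\mu_{|H}$ is a Borel probability measure on $\Sub{H}$ which is invariant under the conjugation action of $H$. The measurability and normalization are immediate: the restriction map $\cdot_{|H}\colon\Pow{G}\to\Pow{H}$ is continuous (hence Borel) and carries $\Sub{G}$ into $\Sub{H}$ — in fact onto, as already recorded above — so $\mu_{|H}\in\Probs{H}$ whenever $\mu\in\Probs{G}$. It therefore only remains to establish $H$-conjugation invariance.

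The heart of the matter is the $H$-equivariance of the restriction map with respect to conjugation, which has already been noted above and which I would spell out as follows: for every $h\in H$ and every subset $A\subset G$ one has $(c_hA)_{|H}=c_h(A_{|H})$, i.e. $(hAh^{-1})\cap H=h(A\cap H)h^{-1}$. This is exactly the place where the hypothesis $h\in H$ is used, since then $hHh^{-1}=H$ and hence
$$ h(A\cap H)h^{-1}=(hAh^{-1})\cap(hHh^{-1})=(hAh^{-1})\cap H. $$
Thus $\cdot_{|H}\circ c_h=c_h\circ\cdot_{|H}$ as maps $\Pow{G}\to\Pow{H}$, for each $h\in H$.

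Finally I would combine this with functoriality of push-forward. For $h\in H$, viewing $c_h$ as acting on both $\Probs{G}$ and $\Probs{H}$,
$$ c_h(\mu_{|H})=(c_h)_*(\cdot_{|H})_*\mu=(c_h\circ\cdot_{|H})_*\mu=(\cdot_{|H}\circ c_h)_*\mu=(\cdot_{|H})_*(c_h\mu)=(c_h\mu)_{|H}. $$
Since $h\in H\subset G$ and $\mu\in\IRS{G}$, we have $c_h\mu=\mu$, so $c_h(\mu_{|H})=\mu_{|H}$; as $h\in H$ was arbitrary, $\mu_{|H}\in\IRS{H}$, as claimed.

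I do not expect a genuine obstacle here. The only point requiring a little care is that the restriction map is equivariant for conjugation by elements of $H$ only, not by arbitrary elements of $G$ — but this is precisely what is needed, since we only want $\mu_{|H}$ to be invariant under $H$. One could also phrase the whole argument on the level of the clopen sets $E_{A,B}$ of Lemma \ref{lem:lemma on IRS convergence} using Lemma \ref{lem:lemma on IRS convergence}, but the measure-theoretic push-forward argument above is cleaner and avoids any approximation.
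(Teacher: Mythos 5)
Your argument is correct and is exactly the paper's: the paper's one-line proof cites precisely the $H$-equivariance of the restriction map under $H$-conjugation, which you spell out in full together with the functoriality of push-forward. No discrepancy.
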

\begin{proof}
This follows immediately from the fact that restriction to $H$ is equivariant with respect to the conjugation action of $H$.
\end{proof}


%
Let $Q$ be a quotient   of $G$ admitting   a surjective homomorphism $\pi : G \to Q$. There is a corresponding map $\pi : \Sub{G} \to \Sub{Q}$ of subgroups taking every subgroup $H \le G$ to its image $\pi(H)$ in $Q$.

\begin{prop}
\label{prop:pushing forward invariant random subgroups}
The map $\pi : \Sub{G} \to \Sub{Q}$  is $G$-equivariant and Borel measurable.
\end{prop}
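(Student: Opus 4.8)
The plan is to verify the two asserted properties of the map $\pi : \Sub{G} \to \Sub{Q}$ separately, both being essentially formal consequences of the definitions.

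\emph{Equivariance.} First I would unpack the conjugation actions. For $g \in G$ and $H \le G$ we have $c_g H = gHg^{-1}$, and since $\pi$ is a group homomorphism, $\pi(gHg^{-1}) = \pi(g)\pi(H)\pi(g)^{-1}$. On the quotient side, the action of $G$ on $\Sub{Q}$ is the one induced through $\pi$, i.e.\ $g$ acts on $K \le Q$ by $K \mapsto \pi(g) K \pi(g)^{-1}$. Hence $\pi(c_g H) = \pi(g)\pi(H)\pi(g)^{-1} = c_g(\pi(H))$, which is exactly $G$-equivariance. (If one instead wants the statement for the $Q$-action on $\Sub{Q}$, the same identity shows $\pi$ intertwines the $G$-action on $\Sub{G}$ with the $Q$-action on $\Sub{Q}$ via the homomorphism $\pi$; either reading is a one-line check.)

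\emph{Borel measurability.} Since $\Sub{Q}$ carries the Chabauty topology, whose Borel $\sigma$-algebra is generated by the clopen sets $E_{A,B} = \{K \le Q : A \subset K,\ B \subset Q \setminus K\}$ for finite $A, B \subset Q$ (as recalled in Lemma \ref{lem:lemma on IRS convergence} and the discussion around the metric \eqref{eq:metric on P}), it suffices to show that $\pi^{-1}(E_{A,B})$ is Borel in $\Sub{G}$ for each such pair. Now $H \in \pi^{-1}(E_{A,B})$ iff $A \subset \pi(H)$ and $B \cap \pi(H) = \emptyset$. The second condition, $B \cap \pi(H) = \emptyset$, is equivalent to $\pi^{-1}(B) \cap H = \emptyset$, i.e.\ $H$ avoids the (possibly infinite, but explicit) set $\pi^{-1}(B) = \bigcup_{b \in B}\pi^{-1}(b)$; this is the countable intersection over $x \in \pi^{-1}(B)$ of the clopen sets $\{H : x \notin H\}$, hence closed, hence Borel. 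The first condition, $A \subset \pi(H)$, says that for each $a \in A$ there exists $x \in G$ with $\pi(x) = a$ and $x \in H$; writing $\pi^{-1}(a) = \{x : \pi(x)=a\}$, this is $\bigcap_{a \in A}\bigcup_{x \in \pi^{-1}(a)}\{H : x \in H\}$, a countable intersection of countable unions of clopen sets, hence again Borel. Intersecting the two gives $\pi^{-1}(E_{A,B})$ as a Borel subset of $\Sub{G}$, and measurability follows.

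I do not anticipate a genuine obstacle here; the only mild subtlety is that $\pi^{-1}(a)$ and $\pi^{-1}(b)$ are cosets of $\ker \pi$ and so may be infinite, which is why the preimage of a clopen cylinder is merely Borel (a countable Boolean combination of clopen sets) rather than clopen. It is worth remarking — and I would note this explicitly, since it is presumably the point of stating this proposition — that combining equivariance and measurability with Proposition \ref{prop:restriction of an IRS is an IRS}-style reasoning shows that the pushforward $\pi_* : \Probs{G} \to \Probs{Q}$ sends $\IRS{G}$ into $\IRS{Q}$: if $c_g\mu = \mu$ for all $g$, then $c_g(\pi_*\mu) = \pi_*(c_g\mu) = \pi_*\mu$ by equivariance, and $\pi_*\mu$ is a well-defined Borel measure on $\Sub{Q}$ by measurability.
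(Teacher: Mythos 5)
Your proof is correct and takes essentially the same route as the paper: both reduce measurability to checking preimages of a generating family of sets and observe that each such preimage is a countable Boolean combination of the clopen sets $\{H : x \in H\}$; the paper uses the subbasic sets $S_q = \{L \le Q : q \in L\}$ where you use the basic sets $E_{A,B}$, but this is a cosmetic difference. The equivariance check is likewise the same one-line computation the paper elides as ``clear.''
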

\begin{proof}
The group $G$ is acting on  $\Sub{G}$ and on $\Sub{Q}$ by conjugation. The $G$-equivariance of the map $\pi$ is clear. To see that $\pi$ is Borel measurable observe that the Chabauty topology on $\Sub{Q}$ is generated by the subsets 
$$ S_q = \{ L \le Q \: : \: q \in L \}$$ 
and their complements. It is easy to verify that the preimage $\pi ^{-1}(S_q)$ is Borel in $\Sub{G}$ for every $q \in Q$.
\end{proof}

In particular, if $\mu$ is an   invariant random subgroup of $G$ then $\pi_* \mu$ is an   invariant random subgroup of $Q$, and if $\mu$ is ergodic then so is $\pi_* \mu$.

\section{Weiss approximable subgroups}
\label{sec:weiss approximable subgroups}

We introduce a notion of Weiss approximable subgroups and relate this to co-soficity of invariant random subgroups. Weiss approximation is inspired by Benjamin Weiss' work \cite{weiss2001monotilable}.
 Our arguments   in this section crucially rely on the pointwise ergodic theorem for amenable groups due to Lindenstrauss \cite{lindenstrauss2001pointwise}.

 \subsection*{Transversals and finite index subgroups}

Let $G$ be a   discrete  group and $H$ be a fixed subgroup of $G$. 
\begin{definition}
\label{def:transversal}
A \emph{(left) transversal} for $H$ in $G$ is a subset $T_0$ consisting of one element from each left coset $tH$ of $H$ in $G$. A \emph{finite-to-one (left) transveral} $T$ for $H$ in $G$ is a disjoint union of finitely many left transversals for $H$ in $G$.
\end{definition}

That is to say $T$ is a finite-to-one   transversal for $H$ in $G$ if there is some $ k\in \NN$ so that $T$ consists of   exactly $k$ elements from each   coset of $H$.

\begin{prop}
\label{prop:universal sequence}
If $G$ is finitely generated then there is a countable   family $\{T_i\}_{i\in\NN}$ of finite subsets such that every finite index subgroup $H$ admits   $T_i$ as a finite-to-one transversal  for all indices $i \in \NN$ sufficiently large.
\end{prop}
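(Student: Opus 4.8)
The plan is to exploit the finite generation of $G$ to enumerate finite index subgroups by their index in a way compatible with a fixed, finitely generated subgroup of the free group, and then to produce the transversals $T_i$ by a pigeonhole argument on the finitely many possibilities at each index. More precisely, fix a finite generating set $S$ of $G$ with $S = S^{-1}$ and $e \in S$, and for each $n \in \NN$ let $B_n = S^n$ denote the ball of radius $n$ in the word metric. I would let $T_i$ be a suitable union of balls $B_{n}$ (or rather a carefully chosen subset of a ball) and argue that for every finite index subgroup $H$ and all large $i$, the set $T_i$ meets each coset of $H$ in the same number of points.

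The key steps, in order, are as follows. First, recall the standard fact that a finitely generated group has only finitely many subgroups of index at most $d$, for every $d \in \NN$; this is because each such subgroup is determined by the kernel of a transitive action on $\{1, \ldots, d\}$, i.e.\ by a homomorphism $G \to \mathrm{S}(d)$, of which there are at most $|\mathrm{S}(d)|^{|S|}$. Call these subgroups $H_1, \ldots, H_{m(d)}$. Second, observe that for a single finite index subgroup $H$ of index $d$, the ball $B_n$ surjects onto $G/H$ once $n$ is at least the diameter of $G/H$ in the generators $S$ (which is at most $d-1$), and more is true: once $n$ is large, $B_n$ is itself a finite-to-one transversal in a strong approximate sense only after we pass to a Folner-type modification. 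Rather than chase exact equidistribution of balls, the cleaner route — and the one I expect the paper to take — is: for each $d$, choose $n(d)$ large enough that $B_{n(d)}$ surjects onto $G/H_j$ for all $j \le m(d)$, and then \emph{define} $T_i$ by taking, inside $B_{n(i)}$, exactly one representative of each coset of a fixed reference subgroup in such a way that the resulting set is automatically a finite-to-one transversal for every $H_j$ simultaneously. Concretely, one can take $T_i = B_{n(i)}$ itself and accept that it is a finite-to-one transversal for $H$ \emph{up to a bounded discrepancy}; but since the statement demands an exact finite-to-one transversal, the honest fix is to intersect with the $H$-orbit structure: let $T_i$ consist of all elements of $B_{n(i)}$, and note that for a subgroup $H$ of index $d \le i$ with diameter $\le n(i)/2$, every coset of $H$ is hit, but possibly unequally. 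To force equality, replace $B_{n(i)}$ by the union over all cosets $gH$ with $g \in B_{n(i)/2}$ of the sets $gH \cap B_{n(i)}$, truncated to a common cardinality $k_i(H)$ — the problem is that $k_i(H)$ depends on $H$.

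The main obstacle is therefore exactly this: making a \emph{single} finite set $T_i$ serve as an exact (not approximate) finite-to-one transversal for \emph{all} finite index subgroups of index up to $i$ at once, when the natural candidates (balls) only do so approximately. I expect the resolution to be the following trick. Enumerate $\Subfi{G}$ as $\{H_j\}_{j \in \NN}$, ordered so that $H_1, \ldots, H_{m(d)}$ are exactly those of index $\le d$. For a fixed $i$, we only need $T_i$ to work for $H_1, \ldots, H_i$ (all of which have index $\le$ some bound $D_i$ and pairwise-common refinement $N_i = H_1 \cap \cdots \cap H_i$, a finite index \emph{normal-ish} subgroup — at least a finite index subgroup, with finite index $[G:N_i]$). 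Now pick any finite-to-one transversal $T_i$ for $N_i$ in $G$ that is a genuine union of left transversals of $N_i$: since every $H_j$ ($j \le i$) contains $N_i$, a union of left transversals of $N_i$ is automatically a union of left transversals of $H_j$, hence a finite-to-one transversal for $H_j$. Thus the construction is: set $N_i = \bigcap_{j \le i} H_j$, and let $T_i$ be any choice of $[G:N_i]$-many coset representatives of $N_i$ (one full transversal suffices, $k=1$). Then for every finite index subgroup $H$, once $i$ is large enough that $H \in \{H_1, \ldots, H_i\}$, we have $N_i \le H$, so $T_i$ — being a transversal for $N_i$ — descends to a finite-to-one transversal for $H$ with multiplicity $[H:N_i]$. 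This proves the proposition, and the only inputs are: finitely many subgroups of each bounded index in a finitely generated group, finite intersections of finite index subgroups have finite index, and the elementary observation that a transversal of a smaller subgroup refines to a finite-to-one transversal of any larger subgroup containing it. I would write it up in roughly that order, flagging the last observation as the one meriting an explicit (one-line) justification.
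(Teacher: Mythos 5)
Your final construction — take $N_i$ to be the intersection of (the finitely many) finite-index subgroups of index at most $i$, let $T_i$ be a transversal of $N_i$, and observe that any transversal of $N_i$ is automatically a finite-to-one transversal of every $H \ge N_i$ — is exactly the paper's proof. The preliminary discussion of word-metric balls and approximate equidistribution is a detour you correctly abandon; the paper goes straight to the intersection argument.
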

\begin{proof}
For every $i \in \NN$ let $N_i$ be the intersection of all subgroups of $G$ with index at most $i$. Since $G$ is finitely generated there are only finitely many such subgroups and therefore   $N_i$ is a finite index normal subgroup of $G$. Let $T_i$ be any transversal to the subgroup $N_i$ in the group $G$. It follows that $T_i$ is a finite-to-one transversal to any finite index subgroup $H$ satisfying $\left[G:H\right] \le i$.
\end{proof}

We say that  such $\{T_i\}_{i\in\NN}$ is a \emph{universal sequence of  transversals} in the group $G$.
 
\subsection*{Actions of finite sets} 
Let $F \subset G$ be a finite subset. Given any subgroup $H \le G$ we will    let $F * H$ denote the   probability measure 
\begin{equation}
\label{eq:conv}
 F * H = \frac{1}{|F|}\sum_{f \in F} c_f \delta_{H}
 \end{equation}
regarded as a point in  $\Probs{G}$. Recall that $c_f \delta_H = \delta_{fHf^{-1}} = \delta_{H^{f^{-1}}}$.

\begin{prop}
\label{prop:finite index subgroups and transversals}
If $\left[G: \mathrm{N}_G(H) \right] < \infty$ and $T$ is \emph{any} finite-to-one transversal of the subgroup $\mathrm{N}_G(H)$   then 
$ T * H \in \IRSfi{G}$ and $T * H$ is supported on the finite conjugacy class of $H$ in $G$.
\end{prop}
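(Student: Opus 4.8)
The plan is to show two things: first that $T*H$ is conjugation invariant, and second that it is supported on the finite conjugacy class of $H$ (which also establishes that $\mu(\Subfi{G}) = 1$, since $H$ itself has finite index whenever $\mathrm{N}_G(H)$ does — wait, one must be careful here: finite index of $\mathrm{N}_G(H)$ does not force finite index of $H$).

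Let me reconsider. Actually $[G : \mathrm{N}_G(H)] < \infty$ only bounds the size of the conjugacy class of $H$, not the index of $H$. So for $T*H \in \IRSfi{G}$ we genuinely need $H$ itself to be of finite index; but a finite-to-one transversal of $\mathrm{N}_G(H)$ is a finite set exactly when $[G:\mathrm{N}_G(H)]<\infty$. Hmm, but then $T*H$ is an atomic measure on finitely many subgroups all conjugate to $H$, and it lies in $\IRSfi{G}$ iff $H$ has finite index. The statement as written must be implicitly assuming $H$ has finite index — re-reading, Proposition \ref{prop:finite index subgroups and transversals} sits under "Finite index subgroups and transversals" and the whole point is $K_i$ finite index. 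I think the cleanest reading is that $H$ is a finite index subgroup (as in the Weiss approximation definition, $K_i \le G$ finite index), so $\mathrm{N}_G(H) \supseteq H$ automatically has finite index. I will write the proof under that understanding, but the key content is invariance and the support claim.

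=== PROOF PROPOSAL ===

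\begin{proof}[Proof sketch]
Write $N = \mathrm{N}_G(H)$ and let $k \in \NN$ be such that the finite-to-one transversal $T$ meets each left coset of $N$ in exactly $k$ elements; thus $|T| = k[G:N]$. Since $H$ has finite index, so does $N$, and $T$ is indeed a finite subset of $G$.

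\textbf{Support on the conjugacy class.} For $f, f' \in G$ we have $fHf^{-1} = f'Hf'^{-1}$ if and only if $f^{-1}f' \in N$, i.e. if and only if $f$ and $f'$ lie in the same left coset of $N$. Hence, grouping the elements of $T$ by which coset of $N$ they lie in, the measure $T*H = \frac{1}{|T|}\sum_{f \in T}\delta_{fHf^{-1}}$ assigns mass $k/|T| = 1/[G:N]$ to each of the $[G:N]$ distinct conjugates of $H$. In particular $T*H$ is the uniform probability measure on the (finite) conjugacy class $\{gHg^{-1} : g \in G\}$, and it is supported on finitely many subgroups of finite index, so $T*H \in \IRSfi{G}$ once we check invariance.

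\textbf{Conjugation invariance.} Let $g \in G$. Then $c_g(T*H) = \frac{1}{|T|}\sum_{f\in T}\delta_{gfHf^{-1}g^{-1}} = \frac{1}{|T|}\sum_{f\in T}\delta_{(gf)H(gf)^{-1}}$. By the previous paragraph the value of $\delta_{(gf)H(gf)^{-1}}$ depends only on the coset $gfN$, and as $f$ ranges over $T$ the element $gf$ ranges over a set $gT$ which again meets each left coset of $N$ in exactly $k$ elements (left translation by $g$ permutes the cosets of $N$ and preserves cardinalities of intersections with $T$). Therefore $gT$ is also a finite-to-one transversal of $N$ with the same multiplicity $k$, and by the support computation above $\frac{1}{|T|}\sum_{f\in T}\delta_{(gf)H(gf)^{-1}}$ is again the uniform probability measure on the conjugacy class of $H$. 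Hence $c_g(T*H) = T*H$ for every $g \in G$, so $T*H \in \IRS{G}$, and combined with the support statement, $T*H \in \IRSfi{G}$ supported on the finite conjugacy class of $H$.
\end{proof}

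The only genuinely delicate point is bookkeeping the coset-multiplicities: one must observe that the map $f \mapsto gf$ carries a finite-to-one transversal of $N$ to another one with the same multiplicity, which is what forces the conjugated measure to coincide with the original uniform measure on the conjugacy class. Everything else is a direct unwinding of definitions.
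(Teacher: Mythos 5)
Your proof is correct and takes essentially the same approach as the paper: group the elements of $T$ by left coset of $\mathrm{N}_G(H)$ to recognize $T*H$ as the uniform probability measure on the finite conjugacy class of $H$, and observe that this measure is invariant under conjugation (the paper does this by equating $T*H$ with the a priori invariant $\mu_H$; you do it by noting $gT$ is again a finite-to-one transversal with the same multiplicity, which is the same bookkeeping). Your preliminary remark is also a good catch: $[G:\mathrm{N}_G(H)]<\infty$ does not by itself force $[G:H]<\infty$, so for the conclusion $T*H\in\IRSfi{G}$ (rather than merely $T*H\in\IRS{G}$) one must read $H$ as a finite index subgroup, which is how the proposition is used throughout the paper (always with $H=K_i$ of finite index).
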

\begin{proof}
Let $T$ be any finite-to-one left transversal in $G$ of the subgroup $\mathrm{N}_G(H)$. Let   $k  \in \NN$ be such that $T = \coprod_{j=1}^{k} T_{j}$ where each $T_{j}$ is a left one-to-one transversal of $\mathrm{N}_G(H) $.  Let $\mu_H \in \IRS{G}$ be the invariant random subgroup of $G$ supported on the finite family of subgroups conjugate to   $H$. Note that
$$\mu_H = \frac{1}{k}\sum_{j=1}^{k} \mu_H  = \frac{1}{k}\sum_{j=1}^{k} \frac{1}{\left[G:\mathrm{N}_G(H)\right]} \sum_{g \in T_{j}} \delta_{H^{g^{-1}}}  = \frac{1}{|T|}\sum_{g \in T} c_g \delta_{H} = T * H.$$ 
as required.
\end{proof}

 

\subsection*{Weiss approximation}

Let $H$ be a subgroup of the discrete group $G$.

\begin{definition}
\label{def:cosofic subgroup}

The subgroup $H$ is \emph{Weiss approximable} in $G$ if there are finite index subgroups $K_i$ of $G$ with finite-to-one transversals $F_i$ to $\mathrm{N}_G(K_i)$ such that
$$ d_{\Probs{G}}(F_i * K_i, F_i * H) \xrightarrow{i \to\infty} 0 $$
where $d_{\Probs{G}}$ is any compatible metric on the space $\Probs{G}$. We will say that the sequence $(K_i, F_i)$   is a \emph{Weiss approximation} for $H$ in $G$. 
\end{definition}
The two sequences   $F_i * K_i$ and $ F_i * H$ are not required to converge.  However, by  the compactness of the space  $\Probs{G}$ it is always possible to  pass to a subsequence such that the limits  of $F_i * K_i$ and of $F_i * H$ do exist and coincide in $\Probs{G}$.

\begin{prop}
\label{prop:co-sofic subgroup implies co-sofic IRS}
Let   $(K_i, F_i)$ be a Weiss approximation for the subgroup  $H$ of $G$. If $ F_i * H $ weak-$*$ converges to some $\mu \in \Probs{G}$ then $\mu \in \IRS{G}$ and $\mu$ is co-sofic.
\end{prop}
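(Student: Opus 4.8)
The plan is to combine the earlier Proposition \ref{prop:finite index subgroups and transversals} with a triangle inequality in the metric $d_{\Probs{G}}$. First I would observe that for each $i \in \NN$, since $K_i$ is a finite index subgroup of $G$ and $F_i$ is a finite-to-one transversal of $\mathrm{N}_G(K_i)$, Proposition \ref{prop:finite index subgroups and transversals} applies and gives $F_i * K_i \in \IRSfi{G}$; in particular each $F_i * K_i$ is an honest invariant random subgroup, supported on the finite conjugacy class of $K_i$ in $G$.

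Next I would pass to the limit. The metric $d_{\Probs{G}}$ is compatible with the weak-$*$ topology, so the hypothesis that $F_i * H$ weak-$*$ converges to $\mu$ means $d_{\Probs{G}}(F_i * H, \mu) \to 0$. The defining property of the Weiss approximation $(K_i, F_i)$ for $H$ is that $d_{\Probs{G}}(F_i * K_i, F_i * H) \to 0$. The triangle inequality then yields
\begin{equation*}
d_{\Probs{G}}(F_i * K_i, \mu) \le d_{\Probs{G}}(F_i * K_i, F_i * H) + d_{\Probs{G}}(F_i * H, \mu) \xrightarrow{i\to\infty} 0,
\end{equation*}
so that $F_i * K_i$ weak-$*$ converges to $\mu$ as well.

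Finally I would conclude. Since $\IRS{G}$ is weak-$*$ closed in $\Probs{G}$ and each $F_i * K_i$ lies in $\IRS{G}$, the limit $\mu$ lies in $\IRS{G}$; in particular $\mu$ is conjugation invariant. Since moreover each $F_i * K_i$ lies in $\IRSfi{G}$, we obtain $\mu \in \overline{\IRSfi{G}}^{\text{weak-$*$}}$, which is exactly the assertion that $\mu$ is co-sofic in the sense of Definition \ref{def:cosofic IRS}. I do not expect a genuine obstacle here: the entire content is already packaged into Proposition \ref{prop:finite index subgroups and transversals}, namely that $F_i * K_i$ is a finitely supported invariant random subgroup concentrated on finite index subgroups; granting that, what remains is the one-line triangle inequality above together with the closedness of $\IRS{G}$ under weak-$*$ limits.
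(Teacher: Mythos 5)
Your proof is correct and takes essentially the same approach as the paper: identify each $F_i * K_i$ as an element of $\IRSfi{G}$ via Proposition \ref{prop:finite index subgroups and transversals}, show its limit is $\mu$, and use weak-$*$ closedness of $\IRS{G}$. The only (cosmetic) difference is that the paper passes to a subsequence to extract the limit of $F_i * K_i$, whereas you observe via the triangle inequality that the full sequence already converges to $\mu$, which is marginally cleaner.
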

\begin{proof}
Up to passing to a subsequence we may assume that the weak-$*$ limit of the sequence $\mu_i = F_i * K_i$  in $\Probs{G}$ exists and coincides with   $\mu = \lim F_i * H$.  In addition $\mu_i \in \IRSfi{G}$ by Proposition \ref{prop:finite index subgroups and transversals}. As $ \IRS{G}$ is weak-$*$ closed we have that $\mu \in \IRS{G}$.
We conclude that $\mu$ is co-sofic.
\end{proof}

\begin{cor}
\label{cor:normal subgroups Weiss equals co-sofic}
 A normal subgroup $N \nrm G$ is Weiss approximable if and only if the invariant random subgroup $\delta_N\in\IRS{G}$ is co-sofic.
\end{cor}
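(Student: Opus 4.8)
The plan is to prove both implications by reducing to the two propositions already available, using the single key observation that normality of $N$ makes the conjugation orbit trivial: for every finite subset $F\subset G$ one has $F*N = \frac{1}{|F|}\sum_{f\in F} c_f\delta_N = \delta_N$, since $c_f\delta_N = \delta_{fNf^{-1}} = \delta_N$. So in all the Weiss-approximation expressions, the side $F_i * N$ is constantly equal to $\delta_N$.

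For the forward direction, I would simply assume $(K_i,F_i)$ is a Weiss approximation for $N$. Then $F_i * N = \delta_N$ for all $i$, so in particular $F_i * N$ weak-$*$ converges to $\delta_N$, and Proposition \ref{prop:co-sofic subgroup implies co-sofic IRS} immediately gives that $\delta_N \in \IRS{G}$ is co-sofic. This direction is essentially formal.

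For the converse, suppose $\delta_N$ is co-sofic. The first step is to note that $\delta_N$ is a Dirac mass on a $G$-fixed point of $\Sub{G}$ (as $N\nrm G$), hence ergodic, so Proposition \ref{prop:if ergodic weak-* limit then is weak* limit of ergodic} applies and yields ergodic $\nu_i \in \IRSfi{G}$ with $\nu_i \to \delta_N$. The second step is to make these concrete: an ergodic element of $\IRSfi{G}$ is the uniform atomic measure on the conjugacy class of some finite index subgroup $K_i \le G$, and since $K_i$ has finite index so does $\mathrm{N}_G(K_i)$; choosing any (one-to-one) transversal $F_i$ of $\mathrm{N}_G(K_i)$ in $G$, Proposition \ref{prop:finite index subgroups and transversals} gives $F_i * K_i = \nu_i$. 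Combining with $F_i * N = \delta_N$, we get
$$ d_{\Probs{G}}(F_i * K_i,\, F_i * N) = d_{\Probs{G}}(\nu_i,\delta_N) \xrightarrow{i\to\infty} 0,$$
so $(K_i,F_i)$ is a Weiss approximation for $N$, completing the proof.

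The only place that is not completely formal is the converse, and the potential obstacle there is passing from an abstract weak-$*$ limit of finite-index invariant random subgroups to the concrete data $(K_i,F_i)$ required by Definition \ref{def:cosofic subgroup}; this is exactly what Propositions \ref{prop:if ergodic weak-* limit then is weak* limit of ergodic} and \ref{prop:finite index subgroups and transversals} were set up to handle, so no extra work is needed.
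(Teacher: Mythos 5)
Your proof is correct and follows essentially the same route as the paper: both directions hinge on the observation that $F_i * N = \delta_N$ for all $i$ by normality, the forward direction invokes Proposition \ref{prop:co-sofic subgroup implies co-sofic IRS}, and the converse combines Propositions \ref{prop:if ergodic weak-* limit then is weak* limit of ergodic} and \ref{prop:finite index subgroups and transversals}, exactly as the paper indicates. You have merely unpacked the paper's terse references into explicit steps.
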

\begin{proof}
Let $N \nrm G$ be a normal subgroup. It is clear that $F * \delta_N = \delta_N$ for any finite subset $F \subset G$. If $N$ is Weiss approximable then  Proposition \ref{prop:co-sofic subgroup implies co-sofic IRS} shows that   $\delta_N$ is co-sofic.
Conversely, if $\delta_N$ is co-sofic then we obtain a Weiss approximation for $N$ by  combining       Propositions  \ref{prop:if ergodic weak-* limit then is weak* limit of ergodic} and \ref{prop:finite index subgroups and transversals}.
\end{proof}

 The question of co-soficity  for   invariant random subgroups of the form $\delta_N$ was discussed in  Remark  \ref{remark:cosofic}.  In particular, if $G$ is any discrete group and $N$ is a normal subgroup such that $G/N$ is residually finite then $N$ is Weiss approximable in $G$.

 
 
 \vspace{5pt}
 
We now present an   explicit condition for a given sequence $(K_i, F_i)$   to constitute a Weiss approximation for the subgroup $H$.

\begin{prop}
\label{prop:a numberical condition for a group to be co-sofic}
Let $H$ be a subgroup of $G$. If  a given sequence  $K_i$ of finite index subgroups of $G$ with   finite-to-one transversals $F_i$ of $\mathrm{N}_G(K_i)$ satisfies
$$ p_i(g) = \frac{|\{f \in F_i \: : \: g^f \in K_i \triangle H \}| }{|F_i|} \xrightarrow{i \to \infty} 0 $$
for all elements $g \in G$ then $(K_i,F_i)$ is  a Weiss approximation for the subgroup $H$.
\end{prop}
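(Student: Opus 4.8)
The plan is to deduce the metric convergence $d_{\Probs{G}}(F_i * K_i, F_i * H) \to 0$ directly from the pointwise-type hypothesis $p_i(g) \to 0$ by unpacking the explicit metric \eqref{eq:metric on P}. Recall that this metric is a weighted sum $\sum_{j} 2^{-j} |\nu(E_{A_j,B_j}) - \nu'(E_{A_j,B_j})|$ over an enumeration of all pairs of finite subsets of $G$. Since the weights $2^{-j}$ are summable, it suffices to show that for each \emph{fixed} pair of finite subsets $A, B \subset G$ we have
\[
\left| (F_i * K_i)(E_{A,B}) - (F_i * H)(E_{A,B}) \right| \xrightarrow{i\to\infty} 0,
\]
and then conclude by a standard $\varepsilon$/$3$ argument: choose $N$ so that the tail $\sum_{j > N} 2^{-j} < \varepsilon/2$, handle the finitely many remaining terms using the per-pair convergence.

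The heart of the matter is the per-pair estimate. Unwinding the definition \eqref{eq:conv} of $F_i * K_i$, we have $(F_i * K_i)(E_{A,B}) = |F_i|^{-1}\,|\{f \in F_i : A \subset fK_if^{-1},\ B \cap fK_if^{-1} = \emptyset\}|$, and similarly for $H$. The condition $A \subset fK_if^{-1}$ is equivalent to $a^{f} \in K_i$ for all $a \in A$, where I use the convention $a^f = f^{-1}af$; likewise $B \cap fK_if^{-1} = \emptyset$ means $b^f \notin K_i$ for all $b \in B$ (this uses $fK_if^{-1} = (K_i)^{f^{-1}}$ as noted after \eqref{eq:conv}, so one should be slightly careful to match the conjugation convention, but it is purely bookkeeping). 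Therefore the set of $f \in F_i$ counted by $(F_i * K_i)(E_{A,B})$ and the set counted by $(F_i * H)(E_{A,B})$ differ only among those $f$ for which, for some $g \in A \cup B$, the membership $g^f \in K_i$ disagrees with $g^f \in H$ — that is, $g^f \in K_i \triangle H$. Hence their symmetric difference is contained in $\bigcup_{g \in A \cup B}\{f \in F_i : g^f \in K_i \triangle H\}$, which has cardinality at most $|F_i|\sum_{g \in A \cup B} p_i(g)$. Dividing by $|F_i|$ and letting $i \to \infty$ with $A \cup B$ finite gives the required bound, since each $p_i(g) \to 0$.

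Finally, combining the two steps yields $d_{\Probs{G}}(F_i * K_i, F_i * H) \to 0$, which is exactly the definition of $(K_i, F_i)$ being a Weiss approximation for $H$ (Definition \ref{def:cosofic subgroup}). I do not expect any serious obstacle here: the only points requiring care are (a) getting the conjugation conventions consistent between $E_{A,B}$ and the definition of $p_i(g)$, and (b) the order of quantifiers in the $\varepsilon$/$3$ split — the truncation level $N$ must be chosen before passing to the limit in $i$, which is fine because it depends only on $\varepsilon$ and not on $i$. The argument does not use amenability or the pointwise ergodic theorem; those enter only later, when one verifies that a hypothesis of this form can actually be arranged.
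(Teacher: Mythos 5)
Your proposal is correct and follows essentially the same route as the paper's own proof: unwind the explicit metric \eqref{eq:metric on P}, reduce to the per-pair estimate $\left|(F_i*K_i)(E_{A,B}) - (F_i*H)(E_{A,B})\right| \le \sum_{g\in A\cup B} p_i(g)$, and then pass to the limit using the boundedness of the individual terms together with the summability of the weights. The only differences are cosmetic: you spell out the truncation argument (which the paper dispatches with ``all terms are uniformly bounded''), and you are slightly more explicit about the conjugation bookkeeping.
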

\begin{proof}
Consider a pair    $A$ and $B$ of   finite subsets of $G$. Denote as before
 $$E_{A,B} = \{ L \le G \: : \: \text{$A \subset L$ and $B \subset G\setminus L$}  \} $$
so that $E_{A,B}$ is a clopen subset  of the Chabauty space $\Sub{G}$. By definition
$$    (F_i * K_i - F_i * H)(E_{A,B})  = \frac{| \{f \in F_i \: : \: c_fK_i  \in E_{A,B}\}| - |\{f \in F_i \: : \: c_fH \in E_{A,B}\}| }{|F_i|}. 
$$
Let $\chi_{ A,B}$ denote the characteristic function of the subset $E_{A,B}$.    An element $f \in F_i$ has a non-zero contribution  to the numerator of the above expression if and only if $\chi_{A,B}(c_fH) \neq \chi_{A,B}(c_fK_i)$. This happens precisely whenever $(A \cup B)^f \cap (K_i \triangle H) \neq \emptyset$. 
We obtain the following upper bound
\begin{align*}
 | (F_i * K_i - F_i * H)(E_{A,B}) |  &\le \frac{ | \{  f \in F_i \: : \: (A \cup B)^f \cap (K_i \triangle H) \neq \emptyset \} |   }{|F_i|} \le \\
 &\le   \sum_{g \in A \cup B} \frac{ | \{  f \in F_i \: : \: g^f \in  K_i \triangle H  \} |   }{|F_i|}  =   \sum_{g \in A \cup B} p_i(g).
\end{align*}
The assumption  that  $p_i(g) \to 0$  for all elements $g \in G$ gives
\begin{equation}
\label{eq:convergence to zero of difference}\lim_{i \to \infty} |(F_i * K_i - F_i * H)(E_{A,B})| = 0
\end{equation} 
for every pair of   finite subsets $A$ and $B$ of $G$. 

Recall Equation $(\ref{eq:metric on P})$ introduced in the discussion following Lemma 	\ref{lem:lemma on IRS convergence}. This is an infinite series expression defining an explicit compatible metric $d_{\Probs{G}}$. All terms of that series are uniformly bounded in the range $\left[0,1\right]$. Therefore Equation (\ref{eq:convergence to zero of difference}) implies $d_{\Probs{G}}(F_i * K_i,F_i * H) \to 0$.
\end{proof}

\subsection*{The pointwise ergodic theorem for amenable groups}  

\begin{definition}
\label{def:Folner sequence}
	A \emph{(left) Folner sequence} for $G$ is a sequence of finite subsets $F_i$ of $G$ such that 
	\begin{equation}
	 \frac{|g F_i \triangle F_i |}{|F_i|} \to 0 
	 \end{equation}
for every element $g \in G$.
\end{definition}
Recall that a Folner sequence $F_i$ is \emph{tempered} if there is a constant $c> 0$ so that
$$| \bigcup_{j<i}F_j^{-1}F_i| \le c|F_i|. $$ 
The tempered  condition is needed for the ergodic theorem for amenable groups, at least in the general case. However, every Folner sequence admits a tempered subsequence \cite[  1.4]{lindenstrauss2001pointwise}. For this reason the tempered condition will not be a major  issue   from our   point of view.

\begin{theorem}[Lindenstrauss  \cite{lindenstrauss2001pointwise}]
\label{thm:pointwise ergodic for amenable}
Let $G$ be an amenable group acting by homeomorphisms on a Hausdorff  second countable   compact space  $X$    ergodically with invariant probability measure $\mu$. Let $F_i$ be a tempered Folner sequence for $G$. Then for  $\mu$-almost every point $x \in X$ we have
\begin{equation}
 \frac{1}{|F_{i}|} \sum_{g \in F_{i}} \delta_{gx} \xrightarrow{i \to\infty} \mu 
 \end{equation}
in the weak-$*$ topology.
\end{theorem}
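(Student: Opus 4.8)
The plan is to deduce Theorem \ref{thm:pointwise ergodic for amenable} from an $L^1$ pointwise ergodic theorem via a separability argument, so that the genuine work is an $L^1$ maximal inequality along a tempered F\o lner sequence. First I would reduce the weak-$*$ convergence of the empirical measures $\frac{1}{|F_i|}\sum_{g\in F_i}\delta_{gx}$ to the scalar statement that, for a fixed $f\in \mathrm{C}(X)$,
\begin{equation}
A_i f(x) := \frac{1}{|F_i|}\sum_{g \in F_i} f(gx) \xrightarrow{i\to\infty} \int_X f \dd\mu \quad \text{for $\mu$-a.e. } x .
\end{equation}
Since $X$ is second countable and compact, $\mathrm{C}(X)$ is separable in the uniform norm; fixing a countable uniformly dense $\mathcal{D}\subset \mathrm{C}(X)$ and intersecting the full-measure sets obtained for each $f\in\mathcal{D}$, a uniform-approximation estimate then upgrades this to convergence for \emph{all} $f\in\mathrm{C}(X)$ on a single full-measure set, which is exactly weak-$*$ convergence of the empirical measures to $\mu$.

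Second, and this is the crux, I would establish that the maximal operator $A^*f = \sup_i |A_i f|$ obeys a weak-type $(1,1)$ bound $\mu(\{A^*f>\lambda\}) \le \frac{C}{\lambda}\|f\|_{L^1(\mu)}$ with $C$ depending only on the temperedness constant. By the Calder\'on transference principle it suffices to prove the corresponding combinatorial maximal inequality for the left translation action of $G$ on itself, restricted to a large F\o lner window. That inequality in turn follows from a Vitali-type covering lemma for amenable groups: from any finite collection of translates $\{xF_{i(x)}\}$ one can extract a subcollection that still covers a definite fraction of the union while having multiplicity bounded in terms of the temperedness constant. The tempered hypothesis $|\bigcup_{j<i}F_j^{-1}F_i|\le c|F_i|$ is exactly what makes a greedy selection — processing the F\o lner sets in order of decreasing size — produce such a bounded-overlap subcover. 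This covering argument, due to Lindenstrauss, is the step I expect to be the main obstacle: controlling overlaps of translates of distinct, non-nested F\o lner sets is delicate and is where the tempered condition is genuinely used.

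Third, with the maximal inequality in hand I would invoke the Banach principle: the set of $f\in L^1(\mu)$ for which $A_i f$ converges $\mu$-a.e. is closed in $L^1(\mu)$, so it suffices to verify convergence on a dense subclass. On $L^\infty$ one has the mean ergodic theorem along a F\o lner sequence, giving $L^2$-convergence of $A_i f$ to the conditional expectation of $f$ onto the $G$-invariant $\sigma$-algebra; by ergodicity this limit is the constant $\int_X f \dd\mu$. Passing to an a.e.-convergent subsequence and combining with the maximal bound promotes this to full a.e. convergence, first for $f\in L^\infty$ and then, by density and closedness, for all $f\in L^1(\mu)$, in particular all $f\in\mathrm{C}(X)$. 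Assembling the three steps yields Theorem \ref{thm:pointwise ergodic for amenable}; for the full technical details I would follow Lindenstrauss \cite{lindenstrauss2001pointwise}.
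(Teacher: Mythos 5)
Your first paragraph is exactly the paper's argument: reduce weak-$*$ convergence to scalar convergence for each fixed continuous $f$, invoke Lindenstrauss's $L^1$ pointwise ergodic theorem for a countable uniformly dense family of continuous functions (the paper uses Stone--Weierstrass to get such a family), and intersect the resulting conull sets. The remaining two paragraphs of your proposal sketch a proof of Lindenstrauss's theorem itself, which the paper treats as a black-box citation of \cite{lindenstrauss2001pointwise}, so you've done more than the paper does, but the approach is the same.
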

For the reader's convenience let us briefly recall the well-known argument to derive the above statement as an immediate consequence of \cite[Theorem 1.2]{lindenstrauss2001pointwise}.

 \begin{proof}[Proof of Theorem \ref{thm:pointwise ergodic for amenable}]
It follows from      \cite{lindenstrauss2001pointwise}   that  for every function $f \in L^1(X,\mu)$ 
\begin{equation}
\label{eq:pw}
 \frac{1}{|F_i|} \sum_{g \in F_i} f(gx) \xrightarrow{i \to \infty} \int f \; \mathrm{d} \mu
 \end{equation}
holds true for every point $x$ belonging to some $\mu$-conull subset $X_f \subset X$.

 Let $\mathcal{F}$ be a countable $L^\infty$-dense subset of $C(X)$. Such a family $\mathcal{F}$ exists by the  Stone--Weierstrass theorem. It is clear that $\mathcal{F} \subset L^1(X,\mu)$. So   Equation \ref{eq:pw} holds for every function $f \in \mathcal{F}$ and every point in the $\mu$-conull subset $\bigcap_{f \in \mathcal{F}} X_f$. In particular, for every such point the sequence of probability measures $ \frac{1}{|F_{i}|} \sum_{g \in F_{i}} \delta_{gx}$ weak-$*$ converges to the ergodic probability measure $\mu$.
 \end{proof}

\subsection*{The ergodic theorem and co-sofic subgroups}
 
The following result will be our main tool in establishing that a given invariant random subgroup is co-sofic. 
 
\begin{theorem}
\label{thm:cosofic subgroup implies cosofic IRS}
Let $G$ be an amenable group and $\mu \in \IRS{G}$.  Let $F_i$ be a fixed   Folner sequence in the group $G$. If   $\mu$-almost every subgroup $H$ admits a Weiss approximation $(K_i, F_i)$ with some sequence   of finite index subgroups $K_i$ of $G$  then $\mu$ is co-sofic.
\end{theorem}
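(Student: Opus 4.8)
The plan is to reduce to the ergodic case and then apply the Lindenstrauss pointwise ergodic theorem (Theorem \ref{thm:pointwise ergodic for amenable}) to the conjugation action of $G$ on $\Sub{G}$. First I would invoke Corollary \ref{cor:enough to prove co-sofic for ergodic} (via Proposition \ref{prop:enough to prove co-sofic for ergodic}) to note that it suffices to handle an ergodic $\mu \in \IRSerg{G}$: indeed the hypothesis that $\mu$-almost every $H$ is Weiss approximated by $(K_i,F_i)$ passes to $\nu_\mu$-almost every ergodic component, since the ergodic decomposition of $\mu$ is a convex combination and the "bad set" of subgroups admitting no Weiss approximation is a Borel null set for $\mu$, hence null for $\nu_\mu$-a.e.\ component. (One should check measurability of the relevant set, but this is routine: admitting a Weiss approximation from the fixed sequence $F_i$ is a countable condition on the countably many candidate finite-index subgroups $K_i$.) So assume from now on that $\mu$ is ergodic.

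Next, replace $F_i$ by a tempered subsequence, which exists by \cite[1.4]{lindenstrauss2001pointwise}; passing to a subsequence does not destroy the Weiss approximation property for any individual $H$, and a Folner sequence remains Folner under subsequencing. Now apply Theorem \ref{thm:pointwise ergodic for amenable} to the ergodic system $(\Sub{G}, \mu)$ with the $G$-action by conjugation $c_g$: for $\mu$-almost every $H \le G$ we get
\begin{equation}
\label{eq:erg-conv-H}
F_i * H = \frac{1}{|F_i|}\sum_{f\in F_i} c_f \delta_H \xrightarrow{i\to\infty} \mu
\end{equation}
in the weak-$*$ topology of $\Probs{G}$. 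Fix a single subgroup $H$ lying in the intersection of this $\mu$-conull set with the $\mu$-conull set of subgroups admitting a Weiss approximation $(K_i,F_i)$ (with the same, now tempered, $F_i$). For this $H$ we have on one hand $F_i * H \to \mu$ by \eqref{eq:erg-conv-H}, and on the other hand $d_{\Probs{G}}(F_i * K_i, F_i * H) \to 0$ by the definition of Weiss approximation. Hence $F_i * K_i \to \mu$ as well. But each $F_i * K_i \in \IRSfi{G}$ by Proposition \ref{prop:finite index subgroups and transversals}, so $\mu$ is a weak-$*$ limit of invariant random subgroups supported on finite-index subgroups, i.e.\ $\mu$ is co-sofic.

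The one genuinely delicate point is making sure that the \emph{same} Folner sequence $F_i$ can be used simultaneously in the pointwise ergodic theorem and in the Weiss approximations, and that this is consistent with the tempering step: this is why the statement fixes $F_i$ in advance and why the Weiss approximation in Definition \ref{def:cosofic subgroup} is built around a prescribed transversal sequence rather than an arbitrary one. The tempering subsequence is chosen first (it depends only on $G$, not on $\mu$ or $H$), and then both the ergodic averages and the Weiss approximations are run along it; since \eqref{eq:erg-conv-H} and the convergence $d_{\Probs{G}}(F_i*K_i,F_i*H)\to 0$ each survive passing to subsequences, no conflict arises. Everything else — measurability of the exceptional set, the fact that $\IRS{G}$ is weak-$*$ closed so the limit $\mu$ genuinely lies in $\IRS{G}$ — is bookkeeping already available from Section \ref{sec:random subsets}.
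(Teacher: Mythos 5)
Your proof is correct and follows essentially the same route as the paper: reduce to the ergodic case, pass to a tempered subsequence, apply the Lindenstrauss pointwise ergodic theorem to get $F_i * H \to \mu$ for $\mu$-a.e.\ $H$, and intersect with the $\mu$-conull set of Weiss-approximable $H$ (the paper packages your final two steps as an appeal to Proposition \ref{prop:co-sofic subgroup implies co-sofic IRS}). The only slip is the parenthetical measurability justification: for a general countable amenable $G$ there may be uncountably many finite-index subgroups, so the ``countably many candidate $K_i$'' argument fails outside the finitely generated case; the paper simply treats measurability of the Weiss-approximable set as part of the hypothesis (``$\mu$-almost every'') rather than proving it.
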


\begin{proof}
 In light of Proposition \ref{prop:enough to prove co-sofic for ergodic} we may assume without loss of generality that $\mu$ is ergodic. 
Consider the probability measure preserving  action of the amenable group $G$ on the Borel space $(\Sub{G},\mu)$.

We may assume, up to passing to a subsequence,  that the Folner sequence $F_i$ is tempered. Since a subsequence of Weiss approximation is again a Weiss approximation, the assumption of the Theorem continues to hold.

Let $X_1 \subset \Sub{G}$ be the subset consisting of all subgroups $H \in \Sub{G}$ for which  $\frac{1}{|F_{i}|} \sum_{f \in F_{i}} \delta_{c_f H}$ converges to the invariant random subgroup $\mu$ as $i \to \infty$.    By Theorem \ref{thm:pointwise ergodic for amenable} the subset $X_1$ is $\mu$-measurable and satisfies  $\mu(X_1) = 1$.  

The subset $X_2 \subset \Sub{G}$ consisting of all subgroups $H \in \Sub{G}$ which are Weiss approximable  with respect to some sequence $(K_i, F_i)$  is $\mu$-measurable and satisfies $\mu(X_2)= 1$ by our assumption.
Therefore $X_1 \cap X_2 \neq \emptyset$. Let $H \in \Sub{G}$ be any subgroup belonging to $X_1 \cap X_2$.   The result   follows from Proposition \ref{prop:co-sofic subgroup implies co-sofic IRS}.
\end{proof}

\begin{remark}
There is nothing special about amenable groups in Theorem \ref{thm:cosofic subgroup implies cosofic IRS}. In principle  the same argument  applies to any group $G$ and sequence of subsets $F_i$ satisfying a pointwise ergodic theorem.  
Examples of such situations include the case where $G$ is hyperbolic and $F_i$ is a sequence of balls \cite{bowen2013neumann} and the case of $S$-arithmetic lattices \cite{gorodnik2009ergodic}.
\end{remark}

For residually finite amenable groups Theorem \ref{thm:cosofic subgroup implies cosofic IRS} admits a converse. We include it below for the sake of completeness, even though it will not be used towards our main results.
This   will rely on  the following interesting result due to Weiss \cite{weiss2001monotilable}, see also \cite[Theorem 6]{abert2007rank}.
\begin{theorem}[Weiss]
\label{thm:Weiss}
Let $G$ be a residually finite amenable group. Let $N_i$ be a descending sequence   of finite index normal subgroups of $G$ satisfying $\bigcap_i N_i = \{e\}$. Then there exists a Folner sequence $F_i$ of transversals of the subgroups $N_i$.
\end{theorem}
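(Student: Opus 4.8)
The plan is to manufacture a transversal of $N_i$ as an almost disjoint, almost covering union of right translates of genuinely invariant Folner sets, so that the transversal inherits their invariance; letting the invariance of the building blocks improve as $i\to\infty$ then produces the required Folner sequence. \emph{Reduction.} Fix finite sets $K_1\subseteq K_2\subseteq\cdots$ with $\bigcup_j K_j=G$ and reals $\e_j\downarrow 0$; it suffices to prove the claim that for each $j$ there is an index $i(j)$ so that every $N_i$ with $i\ge i(j)$ admits a $(K_j,\e_j)$-invariant transversal in $G$. Granting this, take $i(1)<i(2)<\cdots$, let $F_i$ be a $(K_j,\e_j)$-invariant transversal of $N_i$ when $i(j)\le i<i(j+1)$ and arbitrary when $i<i(1)$; for $g\in G$ and $\delta>0$, choosing $j$ with $g\in K_j$ and $\e_j<\delta$ gives $|gF_i\triangle F_i|/|F_i|<\delta$ for all $i\ge i(j)$, so $(F_i)_{i\in\NN}$ is a Folner sequence whose $i$-th term is a transversal of $N_i$.

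\emph{Construction of the invariant transversals.} Fix $j$ and abbreviate $K=K_j$, $\e=\e_j$. By the Ornstein--Weiss quasi-tiling theorem for the amenable group $G$, there are finitely many $(K,\e/4)$-invariant finite sets $T_1,\dots,T_m\subseteq G$, each containing $e$, with the property that any finite quotient $G/N$ of $G$ for which $N$ avoids a certain fixed finite set $D\setminus\{e\}$ (depending only on $K$ and $\e$, chosen so that $\pi_N$ is injective on each $T_k$ and on $KT_k$) can be $(\e/4)$-efficiently tiled by right translates of the images $\pi_N(T_k)$. Since $\bigcap_i N_i=\{e\}$ and the $N_i$ decrease, there is $i(j)$ with $N_i\cap(D\setminus\{e\})=\emptyset$ for all $i\ge i(j)$; for such $i$ we get a disjoint decomposition $G/N_i=\bigl(\bigsqcup_\ell \pi_i(T_{k(\ell)})\,\pi_i(c_\ell)\bigr)\sqcup E$ with $|E|\le\tfrac{\e}{4}[G:N_i]$ and $\pi_i$ injective on each $T_{k(\ell)}c_\ell$. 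Picking an arbitrary representative in $G$ for each coset in $E$, call the resulting set $E_i$, and set $F_i=\bigl(\bigsqcup_\ell T_{k(\ell)}c_\ell\bigr)\sqcup E_i\subseteq G$. Since $N_i$ is normal, a right translate of a left transversal is again a left transversal, so each $T_k c$ meets every coset of $N_i$ at most once; combined with the disjointness and covering of the images in $G/N_i$, this shows $F_i$ is a transversal of $N_i$, so in particular $|F_i|=[G:N_i]$.

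\emph{Invariance.} For $g\in K$, using the disjoint decomposition of $F_i$, the elementary inclusion $X\triangle Y\subseteq\bigcup_\ell(X_\ell\triangle Y_\ell)$ for matched disjoint decompositions, and $|gXc\triangle Xc|=|gX\triangle X|$, one gets
\[ |gF_i\triangle F_i|\;\le\;\sum_\ell |gT_{k(\ell)}\triangle T_{k(\ell)}|+|gE_i\triangle E_i|\;\le\;\tfrac{\e}{4}\sum_\ell|T_{k(\ell)}|+2|E_i|\;\le\;\tfrac{3\e}{4}|F_i|\;<\;\e|F_i|, \]
so $F_i$ is $(K,\e)$-invariant, which proves the claim and hence the theorem.

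\emph{Main obstacle.} The crux is the quasi-tiling step: producing, uniformly for all large $i$, an $(\e/4)$-efficient tiling of the finite group $G/N_i$ by right translates of the images of the fixed invariant sets $T_k$. This is exactly where amenability enters, through the Ornstein--Weiss quasi-tilings of $G$, and where residual finiteness enters, through $\bigcap_i N_i=\{e\}$: one transports a quasi-tiling of a large Folner set of $G$ into the quotient $G/N_i$ by observing that for $i$ large the quotient coincides with $G$ on the bounded-size neighbourhoods inspected by the quasi-tiling procedure. Everything else is routine bookkeeping.
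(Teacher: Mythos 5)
The paper does not actually prove this theorem: it cites Weiss \cite{weiss2001monotilable} and Ab\'ert--Weiss \cite{abert2007rank} and treats the statement as imported. So there is no in-paper proof to compare against, and your write-up must stand on its own merits.

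Your outline is the correct one (quasi-tiling is indeed how Weiss proves this), and the reduction and bookkeeping are fine. But the step you yourself flag as the crux is not a consequence of the Ornstein--Weiss quasi-tiling theorem as you invoke it, and you do not in fact prove it. Ornstein--Weiss produces tiles $T_1,\dots,T_m\subseteq G$ that $\varepsilon$-quasi-tile sufficiently invariant finite subsets \emph{of $G$}; it says nothing about finite quotients $G/N$, and your assertion that ``any finite quotient $G/N$ for which $N$ avoids $D\setminus\{e\}$ can be $(\varepsilon/4)$-efficiently tiled by translates of the images $\pi_N(T_k)$'' is precisely the content of Weiss's theorem itself, not a corollary you can cite. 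The heuristic you offer does not close the gap: transporting a quasi-tiling of a large F{\o}lner set $F\subset G$ into $G/N_i$ only tiles the proper subset $\pi_i(F)\subsetneq G/N_i$, since $|F|$ is fixed while $[G:N_i]\to\infty$; covering the \emph{whole} quotient is exactly the hard part. (What actually has to be done is to run the Ornstein--Weiss filling argument inside the finite group $G/N_i$ itself, with the images $\pi_i(T_k)$ as candidate tiles, and to check that for $N_i$ deep enough every set the algorithm ``inspects'' lifts injectively to a bounded subset of $G$, so that the produced tiles really are images of a fixed collection of tiles in $G$. None of this is in your write-up.) A secondary, more minor issue: Ornstein--Weiss yields $\varepsilon$-disjoint families, not genuinely disjoint ones, so the ``disjoint decomposition'' $G/N_i=\bigl(\bigsqcup_\ell\cdots\bigr)\sqcup E$ requires an extra trimming step before you can read off a transversal; this is routine, but it silently worsens the invariance constants and should be accounted for in your $\tfrac{3\varepsilon}{4}$ estimate.
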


The notion of a universal sequence of transversals was introduced following Proposition \ref{prop:universal sequence}.

\begin{cor}
\label{cor:residually finite amenable has Foler sequence of finite to one transverals}
If $G$ is a finitely generated  residually finite amenable group then $G$ admits a  Folner sequence of universal transversals.
\end{cor}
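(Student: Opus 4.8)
The plan is to combine Theorem \ref{thm:Weiss} with the universal sequence of transversals produced in Proposition \ref{prop:universal sequence}. Recall that for each $i \in \NN$ we let $N_i$ be the intersection of all subgroups of $G$ of index at most $i$; since $G$ is finitely generated, each $N_i$ is a finite index normal subgroup of $G$, and the sequence $(N_i)$ is descending. First I would observe that because $G$ is residually finite, $\bigcap_i N_i = \{e\}$: any nontrivial element $g$ lies outside some finite index subgroup $K$, and then $g \notin N_i$ for every $i \ge [G:K]$. Passing to the subsequence $(N_{i_j})$ if necessary does no harm, so we may assume $(N_i)$ is a genuine strictly descending (or at least non-increasing) sequence of finite index normal subgroups with trivial intersection, exactly the hypothesis of Theorem \ref{thm:Weiss}.

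Next I would apply Weiss' Theorem \ref{thm:Weiss} to this sequence $(N_i)$, obtaining a Folner sequence $F_i$ in $G$ such that each $F_i$ is a transversal of $N_i$ in $G$. It remains to check that $(F_i)$ is a sequence of \emph{universal} transversals in the sense defined after Proposition \ref{prop:universal sequence}, i.e. that every finite index subgroup $H \le G$ admits $F_i$ as a finite-to-one transversal for all sufficiently large $i$. This is immediate from the construction: if $[G:H] = d$, then for every $i \ge d$ we have $N_i \le N_d \le H$, so $H$ is a union of $[H:N_i]$ cosets of $N_i$; since $F_i$ contains exactly one element of each coset of $N_i$, it contains exactly $[H:N_i]$ elements of each coset of $H$ in $G$. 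Hence $F_i$ is a finite-to-one transversal of $H$ for every $i \ge d$, which is precisely universality.

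I do not expect a serious obstacle here: the statement is essentially a packaging of Weiss' theorem together with the elementary observation that a transversal of a small normal subgroup refines to a finite-to-one transversal of any larger finite index subgroup. The only point requiring a little care is making sure the sequence $(N_i)$ as defined is eligible for Theorem \ref{thm:Weiss} — that it is descending (clear, since more constraints give smaller intersections) and has trivial intersection (residual finiteness) — and that one may freely pass to a subsequence, which is harmless because a subsequence of a Folner sequence is again Folner and a subsequence of universal transversals is still universal. So the proof is short: verify the hypotheses of Theorem \ref{thm:Weiss} for $(N_i)$, invoke it, and then read off universality from the coset-counting argument above.
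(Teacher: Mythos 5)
Your proof is correct and takes essentially the same route as the paper, which simply says to follow the proof of Proposition \ref{prop:universal sequence} while using Weiss' Theorem \ref{thm:Weiss} to choose the transversals. You have spelled out the two small verifications the paper leaves implicit (that the $N_i$ have trivial intersection by residual finiteness, and the coset-counting argument showing a transversal of $N_i$ is a finite-to-one transversal of any $H$ with $N_i \le H$), and these are exactly the right checks; the passage to a subsequence you mention is unnecessary since the $N_i$ as defined are already descending, but this is harmless.
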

\begin{proof}
The Corollary follows exactly as in the proof of Proposition \ref{prop:universal sequence}  while relying   on  Weiss Theorem \ref{thm:Weiss} in the choice of  the transversals.
\end{proof}

We are ready to present the promised converse to Theorem \ref{thm:cosofic subgroup implies cosofic IRS}.

\begin{theorem}
\label{thm:converse on cosofic for residually finite}
Let $G$ be a finitely generated residually finite amenable group. There exists  a   Folner sequence in the group $G$ such that if  $\mu \in \IRS{G}$ is ergodic and co-sofic then $\mu$-almost every subgroup $H$ of $G$ admits a Weiss approximation $(K_i, F_i)$ for some sequence $K_i$ of finite index subgroups.
\end{theorem}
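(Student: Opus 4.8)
The plan is to let $F_i$ be a \emph{tempered} Fölner sequence of universal transversals in $G$. Such a sequence exists: Corollary \ref{cor:residually finite amenable has Foler sequence of finite to one transverals} produces a Fölner sequence of universal transversals, any subsequence of it is still universal in the sense of Proposition \ref{prop:universal sequence}, and every Fölner sequence has a tempered subsequence. The only property of $F_i$ we shall use beyond its being Fölner and tempered is universality: for every finite index subgroup $M \le G$, the set $F_i$ is a finite-to-one transversal of $\mathrm{N}_G(M)$ for all $i$ sufficiently large. This $F_i$ is fixed once and for all, before $\mu$ is chosen.

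Now fix an ergodic co-sofic $\mu \in \IRS{G}$ and record two facts. First, by Proposition \ref{prop:if ergodic weak-* limit then is weak* limit of ergodic} we may write $\mu = \lim_n \nu_n$ with each $\nu_n \in \IRSfi{G}$ ergodic; being a uniform atomic measure, $\nu_n$ is supported on the conjugacy class of some finite index subgroup $L_n \le G$, and by Proposition \ref{prop:finite index subgroups and transversals} we have $F_i * L_n = \nu_n$ whenever $i$ is large enough that $F_i$ is a finite-to-one transversal of $\mathrm{N}_G(L_n)$. Second, applying the pointwise ergodic theorem (Theorem \ref{thm:pointwise ergodic for amenable}) to the conjugation action of $G$ on $(\Sub{G},\mu)$ along the tempered Fölner sequence $F_i$ yields a $\mu$-conull set $X_1 \subseteq \Sub{G}$ such that $F_i * H \to \mu$ weak-$*$ for every $H \in X_1$.

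The heart of the argument is a diagonal selection. Fix $H \in X_1$, and for each $i$ let $S_i = \{ n \in \NN : F_i \text{ is a finite-to-one transversal of } \mathrm{N}_G(L_n)\}$; the sets $S_i$ increase with $i$ and $\bigcup_i S_i = \NN$, so $S_i$ is non-empty for all large $i$. For such $i$ pick $n(i) \in S_i$ minimising $d_{\Probs{G}}(\nu_n, F_i * H)$ and put $K_i := L_{n(i)}$ (choosing $K_i$ arbitrarily for the remaining finitely many $i$). Then $F_i$ is a finite-to-one transversal of $\mathrm{N}_G(K_i)$ and $F_i * K_i = \nu_{n(i)}$, so it suffices to show $d_{\Probs{G}}(\nu_{n(i)}, F_i * H) \to 0$. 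Given $\e > 0$, fix a single index $n_0$ with $d_{\Probs{G}}(\nu_{n_0}, \mu) < \e$; since $n_0 \in S_i$ for all large $i$, the minimality of $n(i)$ gives
$$ d_{\Probs{G}}(\nu_{n(i)}, F_i * H) \le d_{\Probs{G}}(\nu_{n_0}, F_i * H) \le \e + d_{\Probs{G}}(\mu, F_i * H) $$
for all large $i$, and $d_{\Probs{G}}(\mu, F_i * H) \to 0$ since $H \in X_1$. Hence $(K_i, F_i)$ is a Weiss approximation for $H$, which proves the theorem.

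I expect the one genuine subtlety — the main obstacle — to be reconciling the \emph{resolution} of the transversal sequence with the rate of approximation of $\mu$: because $F_i$ is only a finite-to-one transversal of $\mathrm{N}_G(M)$ for subgroups $M$ whose index is bounded in terms of $i$, one cannot simply take $K_i = L_i$, as the indices $[G:L_n]$ may grow without control. The resolution, exploited above, is that co-soficity already supplies a \emph{fixed} finite index subgroup $L_{n_0}$ with $\nu_{n_0}$ arbitrarily close to $\mu$, and this $L_{n_0}$ stays admissible for all large $i$; combined with the pointwise ergodic convergence $F_i * H \to \mu$, this forces the selected $\nu_{n(i)}$ toward $\mu$, hence toward $F_i * H$. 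The remaining points — that thinning a universal sequence to a tempered one preserves universality, and that $F_i * L_n$ equals $\nu_n$ for $i$ large — are routine consequences of Propositions \ref{prop:universal sequence} and \ref{prop:finite index subgroups and transversals}.
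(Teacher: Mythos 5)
Your proof is correct and takes essentially the same route as the paper: fix a tempered Folner sequence of universal transversals, combine Propositions~\ref{prop:if ergodic weak-* limit then is weak* limit of ergodic} and~\ref{prop:finite index subgroups and transversals} to produce finite index subgroups with $F_i * K_i \to \mu$, and invoke Theorem~\ref{thm:pointwise ergodic for amenable} to get $F_i * H \to \mu$ for $\mu$-almost every $H$. The one variation is how you align the transversal sequence with the finite index subgroups: you perform an $H$-dependent diagonal selection minimizing $d_{\Probs{G}}(\nu_n, F_i * H)$ over the admissible set $S_i$, whereas the paper simply \enquote{repeats} each subgroup finitely many times to re-index a single sequence $K_i$ so that $F_i$ is a finite-to-one transversal of $\mathrm{N}_G(K_i)$ for all $i$, which is a touch more economical and, as the paper remarks afterwards, produces one sequence $K_i$ working for $\mu$-almost every $H$ simultaneously. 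A small wrinkle in your version: $S_i$ may be infinite, so the infimum of $d_{\Probs{G}}(\nu_n, F_i * H)$ over $n \in S_i$ need not be attained; taking a near-minimizer (or restricting to $n \le i$) fixes this immediately and does not affect the estimate.
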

\begin{proof}
Fix a  Folner sequence $F_i$ of universal transversals in the group $G$ as provided by   Corollary \ref{cor:residually finite amenable has Foler sequence of finite to one transverals}. Up to passing to a subsequence we may assume that the sequence $F_i$ is tempered.

Let $\mu$ be an ergodic and co-sofic invariant random subgroup of $G$. There exists a sequence of finite index subgroups $K_i$ so that $T_i * K_i \to \mu$ for any sequence finite-to-one transversals $T_i$, see Propositions  \ref{prop:if ergodic weak-* limit then is weak* limit of ergodic} and
	\ref{prop:finite index subgroups and transversals}. Up to \enquote{repeating} each subgroup $K_i$ finitely many times, we may assume that $F_i$ is indeed a finite-to-one transversal to $K_i$ for all $i \in \NN$. In particular $F_i * K_i \to \mu$  with respect to our fixed universal Folner sequence of transversals. 

 The ergodic theorem for amenable groups (Theorem  \ref{thm:pointwise ergodic for amenable}) implies that $\mu$-almost every subgroup $H$ satisfies $F_i * H \to \mu$. We deduce that $\mu$-almost every subgroup $H$ admits a Weiss approximation of the form $(K_i, F_i)$.
\end{proof}

Since the invariant random subgroup $\mu$ appearing in  Theorem \ref{thm:converse on cosofic for residually finite} is   ergodic   the same sequence  $(K_i,F_i)$ is a Weiss approximation to $\mu$-almost every subgroup.

\section{Group extensions and Goursat triples}
\label{sec:Goursat}

Let $G$ be a   discrete group. Assume   that $G$ is an extension of the group $Q$ by the group $N$. This means that  there is a short exact sequence
$$ 1 \to N \to G \to Q \to 1.$$
It will be useful to fix an arbitrary lift  $\widehat{q} \in G$ for every element $q \in Q$. This means that $\widehat{q} N = q \in Q$. It particular if $I \subset Q$ is any subset we will use the notation
$$ \widehat{I} = \{\widehat{q} \: : \: q \in I \}.$$

\subsection*{Goursat's triplets}
Given a subgroup $H$ of $G$ consider the pair of subgroups $$N_H = H \cap N \le N \quad \text{and} \quad  Q_H = HN/N  \le Q.$$
Let $\alpha_H$ denote the natural group isomorphism
$$ \alpha_H : Q_H \to H / N_H.  $$
For every element   $q  \in Q_H$ we will regard the image $\alpha_H(q )$ as a subset of the group  $G$. Namely $\alpha_H(q ) \subset H \subset G$ is a coset of the subgroup $N_H$.  Indeed
$$ \alpha_H(q ) = H \cap \widehat{q}N \quad \forall q  \in Q_H.$$

\begin{prop}[Goursat lemma for group extensions]
	\label{prop:description of subgroups of direct products}
	There is a bijective correspondence between subgroups $H$ of $G$  and triples $[Q_H, N_H,  \alpha_H]$ where 
	\begin{itemize}
		\item $Q_H$ is a subgroup of $Q$, 
		\item $N_H$ is a subgroup of $N$, and
		\item  $\alpha_H : Q_H \to \mathrm{N}_G(N_H)/N_H$ is a homomorphism satisfying   $\alpha_H(q )N_H \subset \widehat{q} N$ for every element $q  \in Q_H$.
	\end{itemize}
	\end{prop}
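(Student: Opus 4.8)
The plan is to establish a bijection by constructing maps in both directions and verifying they are mutually inverse. First I would, given a subgroup $H \le G$, produce the triple $[Q_H, N_H, \alpha_H]$ as already defined in the text: set $N_H = H \cap N$, $Q_H = HN/N$, and let $\alpha_H : Q_H \to H/N_H$ be the canonical isomorphism arising from the second isomorphism theorem applied inside $H$. The key points to check here are that $N_H$ is normalized by $H$, hence that $H \subset \mathrm{N}_G(N_H)$, so that $\alpha_H$ may legitimately be viewed as a homomorphism $Q_H \to \mathrm{N}_G(N_H)/N_H$; and that the coset $\alpha_H(q)N_H$, regarded as a subset of $G$, is exactly $H \cap \widehat{q}N$ and therefore lies inside $\widehat{q}N$. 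This last identity is the compatibility condition $\alpha_H(q)N_H \subset \widehat{q}N$, and it is essentially immediate from $Q_H = HN/N$: an element of $H$ maps to $q \in Q$ precisely when it lies in $\widehat{q}N$.

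Conversely, given a triple $[P, M, \beta]$ with $P \le Q$, $M \le N$, and $\beta : P \to \mathrm{N}_G(M)/M$ a homomorphism with $\beta(q)M \subset \widehat{q}N$ for all $q \in P$, I would define
\[
 H = \bigcup_{q \in P} \beta(q) M,
\]
the union of the cosets $\beta(q)M \subset G$. The main verification is that $H$ is a subgroup: closure under multiplication follows because $\beta$ is a homomorphism — $\beta(q)M \cdot \beta(q')M = \beta(qq')M$ since $M$ is normalized by each $\beta(q)$ — and closure under inversion follows similarly from $\beta(q)^{-1}M = \beta(q^{-1})M$. The condition $\beta(q)M \subset \widehat{q}N$ guarantees that distinct $q$ give genuinely disjoint cosets (they lie in distinct cosets of $N$), so the union is well-behaved and $H \cap N = \beta(e)M = M$ while $HN/N = P$.

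The final step is to check the two constructions are inverse to each other. Starting from $H$, forming the triple, and rebuilding the subgroup returns $\bigcup_{q \in Q_H}(H \cap \widehat{q}N) = H$, using that the cosets $\widehat{q}N$ partition $G$ and that $H$ meets only those $\widehat{q}N$ with $q \in Q_H = HN/N$. Starting from a triple $[P,M,\beta]$, building $H$ and extracting its triple returns $N_H = M$, $Q_H = P$, and $\alpha_H = \beta$ by the coset identities just noted. I do not expect any serious obstacle here; the only mildly delicate point is bookkeeping the identification of the abstract isomorphism $\alpha_H : Q_H \to H/N_H$ with a map into $\mathrm{N}_G(N_H)/N_H$ whose image cosets sit in the prescribed $\widehat{q}N$, i.e. keeping straight the difference between a quotient-group element and the subset of $G$ it names. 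Everything else is a routine application of the isomorphism theorems together with the observation that a choice of lifts $\widehat{q}$ is only needed to phrase the compatibility condition and plays no essential role.
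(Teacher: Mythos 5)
Your proposal is correct and follows essentially the same route as the paper's proof: construct the triple from $H$ via $N_H = H\cap N$, $Q_H = HN/N$, and the canonical isomorphism $\alpha_H$, and reconstruct $H$ from a triple as the union $\bigcup_{q\in P}\beta(q)M$ of the cosets named by $\beta$, which is exactly the paper's $H = \alpha_H(Q_H)N_H$. You fill in the verification (closure under multiplication and inversion using that $M$ is normalized, disjointness of the cosets, recovery of the triple) which the paper leaves to the reader, but the underlying argument is identical.
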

		We will denote   $[H] = \left[Q_H, N_H, \alpha_H\right]$ and call this   the \emph{Goursat triplet} associated to the subgroup $H$. The subgroup $H$ is uniquely determined by its Goursat triplet. Every homomorphism $\alpha_H$ participating in a Goursat triplet is necessarily injective.

	\begin{proof}[Proof of Proposition 	\ref{prop:description of subgroups of direct products}]

The   discussion preceding the statement  demonstrates how to associate to every  subgroup $H\le G$ a triple $[N_H, Q_H, \alpha_H]$ as above.  
		
For the converse, assume that we are given a triple $[Q_H, N_H, \alpha_H]$ with the above properties. Consider the   subgroup $H$ of $G$ given by
$$ H = \alpha_H(Q_H)N_H.$$
In other words $H$ is the image of the homomorphism $\alpha_H$ being regarded as a subset of the group $G$. It is easy to verify that the resulting subgroup  $H$ corresponds to the given Goursat triplet.
%
		
We conclude that the   triple $[Q_H, N_H, \alpha_H] $ uniquely determines the subgroup $H$ and that this correspondence is bijective.
	\end{proof}

The above makes it clear that the index of a subgroup $H$ of $G$ is equal to
$$\left[G:H\right] = \left[N:N_H\right]\left[Q:Q_H\right]$$
 with the understanding that $\infty \cdot n = \infty \cdot \infty = \infty$.


\begin{prop}  
\label{prop:triple notation for conjugates}
Let $H$ be a subgroup of $G$ and $g  \in G$ be an element with $q = gN \in Q$.
  Then the Goursat triplet of the conjugate subgroup $H^g$ is  
  $$[H^g]  = [N_H^g, Q_H^q, \alpha_H^g]$$ 
where the isomorphism $\alpha_H^g : Q_H^q \to H^g/N_H^g$ is given by 
$$ \alpha_H^g(r^q) N_{H^g} = (\alpha_H(r)N_H)^g$$
for every $r \in Q_H$.
\end{prop}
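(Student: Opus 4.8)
The plan is to verify directly that the triple $[N_H^g, Q_H^q, \alpha_H^g]$ satisfies the three conditions of Proposition \ref{prop:description of subgroups of direct products}, and then to check that the subgroup it determines is exactly $H^g$; by the uniqueness half of the Goursat correspondence, this will prove the claim. First I would unwind the definitions: $N_{H^g} = H^g \cap N = (H \cap N)^g = N_H^g$, using that $N$ is normal so $N^g = N$. Likewise $Q_{H^g} = H^g N / N = (HN)^g N / N = (HN/N)^q = Q_H^q$, where the last equality is just the statement that conjugation by $g$ in $G$ descends to conjugation by $q = gN$ in $Q$. So the first two components are forced, and the only real content is the identification of the homomorphism.

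For the third component, recall that by definition $\alpha_{H^g}(s) = H^g \cap \widehat{s} N$ for $s \in Q_{H^g}$, viewed as a coset of $N_{H^g}$ inside $G$. Given $r \in Q_H$, I would compute $\alpha_{H^g}(r^q)$: it is $H^g \cap \widehat{r^q} N$. Since $\widehat{r^q} N = r^q = q^{-1} r q = q^{-1}(rN)q$ in $Q$, and $\alpha_H(r) = H \cap \widehat{r} N$ is a coset of $N_H$ lying in $\widehat{r}N$, conjugating by $g$ gives $(\alpha_H(r))^g = H^g \cap (\widehat{r}N)^g = H^g \cap g^{-1}\widehat{r}N g$, and $g^{-1}\widehat{r}N g$ maps to $r^q$ in $Q$, i.e. it equals $\widehat{r^q}N$. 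Hence $(\alpha_H(r)N_H)^g = H^g \cap \widehat{r^q}N = \alpha_{H^g}(r^q)N_{H^g}$, which is precisely the asserted formula. The only subtlety is bookkeeping with cosets versus coset representatives and the choice of lifts $\widehat{\cdot}$: the two lifts $\widehat{r^q}$ and $g^{-1}\widehat{r}g$ need not be equal, only congruent mod $N$, but since $\alpha_{H^g}(r^q)$ is defined as an honest coset of $N_{H^g}$ (namely $H^g \cap \widehat{r^q}N$, which does not depend on the choice of lift within $\widehat{r^q}N$), this causes no problem.

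It remains to check that $\alpha_H^g$ so defined is a well-defined homomorphism $Q_H^q \to \mathrm{N}_G(N_H^g)/N_H^g$ and that the normalization condition $\alpha_H^g(r^q)N_{H^g} \subset \widehat{r^q}N$ holds; but conjugation by $g$ is an automorphism of $G$ carrying $\mathrm{N}_G(N_H)$ to $\mathrm{N}_G(N_H^g)$ and carrying $N_H$ to $N_H^g$, so $\alpha_H^g$ is literally the composition of $\alpha_H$ with the induced isomorphism $\mathrm{N}_G(N_H)/N_H \to \mathrm{N}_G(N_H^g)/N_H^g$, precomposed with the isomorphism $Q_H^q \to Q_H$ given by $r^q \mapsto r$. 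All three properties are then inherited from the corresponding properties of $[H]$, and the normalization condition follows from the coset computation above. Finally, $\alpha_H^g(Q_H^q)N_{H^g} = (\alpha_H(Q_H)N_H)^g = H^g$, so the subgroup reconstructed from the triple is $H^g$ as needed. I do not anticipate a genuine obstacle here; the work is entirely in keeping the lifts $\widehat{q}$, the cosets, and the quotient map straight, and in being careful that every object is interpreted as a subset of $G$ in the way set up in the discussion preceding Proposition \ref{prop:description of subgroups of direct products}.
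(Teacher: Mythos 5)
Your proof is correct and is precisely the "straightforward verification" that the paper leaves to the reader: a direct check that $N_{H^g} = N_H^g$ and $Q_{H^g} = Q_H^q$ (both using normality of $N$, and that conjugation by $g$ descends to conjugation by $q$ in $Q$), followed by the coset computation $(\alpha_H(r)N_H)^g = H^g \cap \widehat{r^q}N = \alpha_{H^g}(r^q)N_{H^g}$, with the well-definedness of $\alpha_H^g$ and the Goursat conditions inherited via the automorphism $x \mapsto x^g$. Nothing is missing; the handling of the lifts $\widehat{\cdot}$ modulo $N$ is exactly the only point requiring care and you have addressed it.
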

\begin{proof}
We leave the straightforward verification to the  reader.
\end{proof}


\subsection*{Transversals   in group extensions}

Recall that $\widehat{q} \in G$ is some fixed and arbitrary  lift for each element $q \in Q$.

\begin{prop}
	\label{prop:product of finite-to-one is finite-to-one transversal}
Let $I$ and $P$    be left transversals, respectively, for the subgroups $R \le    Q$ and $L  \le N$. 
Then 
 $ F = \widehat{I} P$ is a left transversal for any subgroup $H$ of $G$ with $Q_H = R$ and $N_H = L$.
\end{prop}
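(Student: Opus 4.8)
The plan is to verify directly that $F=\widehat{I}P$ selects exactly one element from each left coset $gH$ of $H$ in $G$. Recall from the Goursat description (Proposition \ref{prop:description of subgroups of direct products}) that $H=\alpha_H(R)L$ with $R=Q_H$ and $L=N_H$, and that $\alpha_H(r)$ is a coset of $L$ contained in $\widehat{r}N$ for each $r\in R$. The key observation is that the image of $H$ under the quotient map $\pi\colon G\to Q$ is exactly $R$, so the cosets of $H$ in $G$ project onto the cosets of $R$ in $Q$; this lets us organize the coset space of $H$ in two layers, indexed first by $Q/R$ and then, within each fiber, by $N/L$.

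\textbf{Key steps.} First I would show $F$ meets every coset $gH$. Given $g\in G$ with $q=gN\in Q$, pick the representative $r\in I$ of the coset $qR\in Q/R$, so $q=r r'$ for some $r'\in R$; then $\widehat{r}^{-1}g\in \widehat{r'}N$. Using that $\alpha_H(r')$ is a full coset of $L$ inside $\widehat{r'}N$ and that $P$ is a transversal for $L$ in $N$, I would produce $p\in P$ with $\widehat{r}p \in gH$: concretely, $\widehat{r}^{-1}g = n$ for some $n\in N$ lying in some coset $nL$ of $L$ in $N$; choosing $h_0\in H\cap \widehat{r'}N=\alpha_H(r')$ and then $p\in P$ with $p L = h_0^{-1} n L$ (possible since $h_0^{-1}n\in N$), one checks $\widehat{r}p$ and $g$ differ by an element of $H$. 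Second, I would show $F$ meets each coset \emph{at most} once. Suppose $\widehat{r_1}p_1 H=\widehat{r_2}p_2 H$ with $r_i\in I$, $p_i\in P$. Applying $\pi$ gives $r_1 R=r_2 R$ in $Q$, hence $r_1=r_2$ since $I$ is a transversal for $R$; call it $r$. Then $(\widehat{r}p_1)^{-1}(\widehat{r}p_2)=p_1^{-1}p_2\in H$, and since $p_1^{-1}p_2\in N$ this element lies in $H\cap N=L$; as $P$ is a transversal for $L$ in $N$, we get $p_1=p_2$.

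\textbf{Main obstacle.} The only real subtlety is the surjectivity step: one must use the defining property $\alpha_H(r')L\subset\widehat{r'}N$ together with the fact that $\alpha_H(r')$ is a \emph{single} coset of $L$ (equivalently $H\cap\widehat{r'}N$ is a coset of $N_H$, as noted in the discussion before Proposition \ref{prop:description of subgroups of direct products}) to guarantee that, after matching the $Q$-layer via $I$, the element $\widehat{r}^{-1}g\in N$ can be pushed into $H$ by modifying on the right by a unique element of $P$. Everything else is bookkeeping with the two-layer coset structure $[G:H]=[N:L][Q:R]$ already recorded after Proposition \ref{prop:description of subgroups of direct products}. I expect the write-up to be short, essentially the two displayed computations above with the Goursat identification invoked once.
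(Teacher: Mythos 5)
Your uniqueness half is correct and matches the paper in spirit. The existence half has a genuine gap coming from the order of multiplication, and it is exactly the subtlety that the paper's write-up is organized to sidestep.

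A small slip first: you write "$\widehat{r}^{-1}g = n$ for some $n\in N$," but $n := \widehat{r}^{-1}g$ lies in the coset $\widehat{r'}N$, not in $N$ (unless $r'$ is trivial); your subsequent use of $h_0^{-1}n\in N$ shows you were really treating $n$ as an element of $\widehat{r'}N$, so this one is cosmetic. The substantive problem is the choice "$pL = h_0^{-1}nL$." You want $\widehat{r}p\in gH$, i.e. (since $g=\widehat{r}n$) $g^{-1}\widehat{r}p = n^{-1}p\in H$. With $p=h_0^{-1}n\,l$, $l\in L$, you get $n^{-1}p = \left(n^{-1}h_0^{-1}n\right)l$, and the factor $n^{-1}h_0^{-1}n$ is a conjugate of $h_0^{-1}$ by an element $n\notin H$. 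There is no reason for this to lie in $H$: one would need the $N$-valued commutator $[h_0^{-1},n]$ to fall in $N_H$, which you have not arranged. A concrete failure: $G=S_3$, $N=A_3$, $H=\{e,(12)\}$, $g=(13)$; there $n=(13)$, $h_0=(12)$, your recipe gives $p=(12)(13)=(132)$, and $\widehat{r}p=(132)\notin gH=(123)H$. The coset you actually want is $nh_0^{-1}L$ (note $nh_0^{-1}\in N$, so this is a genuine coset of $L$ in $N$); then $p=nh_0^{-1}l$ yields $n^{-1}p=h_0^{-1}l\in H$ immediately, and $P$ supplies the unique such $p$.

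The paper dodges this bookkeeping by running the argument in the opposite direction: for fixed $g$ it produces $h=h_1h_2\in H$ with $gh\in F$, choosing $h_1\in\alpha_H(r)$ first (pushing $g$ into $\widehat{s}N$) and then $h_2\in L$ (pushing into $\widehat{s}P$). Right-multiplying $g$ by elements of $H$ keeps the witness inside $H$ for free; your dual approach of computing the $(\widehat{I},P)$-coordinates of the transversal element forces a membership check $n^{-1}p\in H$ at the end, and that is exactly where the factor order bites. Once the coset is corrected to $nh_0^{-1}L$ your proof goes through and is essentially equivalent to the paper's.
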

\begin{proof}
Let $H \le G$ be any subgroup with Goursat triplet 
$ [H] = [R, L, \alpha]$ 	where $\alpha : R \to \mathrm{N}_G(L)/L$ is any   homomorphism with $\alpha(r) L \subset \widehat{r} N$ for every  element $r \in R$. We claim that for every element $g \in G$ there is a unique element $h \in H$ such that $gh\in F$.
	
Consider some fixed element $g \in G$ with $gN = q \in Q$.  Let $r \in R$ be the unique element so that $qr = s \in I$ and $h_1 \in \alpha_H(r) \subset H$ be an arbitrary element. As $r=h_1N$ and $s = \widehat{s}N$ we have 
$$g h_1 =  \widehat{s} n$$
 for some element   $n \in N$. There a unique element  $h_2 \in L $ such that $n h_2 = p \in P$. Denote $h = h_1 h_2 \in H$ so that
$$g h = g h_1 h_2 =  \widehat{s} nh_2 = \widehat{s} p \in F.$$

The uniqueness of the element $h \in H$ satisfying the above condition   follows from the   uniqueness   of   $r \in R$ combined with the uniqueness of the element $h_2 \in L$.
\end{proof}

\subsection*{Folner sets in   group extensions} Folner sets for $G$ can be constructed by combing Folner sets for $Q$ and for $N$. Recall that following result of Weiss from \cite[\S2]{weiss2001monotilable}.




\begin{prop}[\cite{weiss2001monotilable}]
	\label{prop:adapted and uniformly Folner gives product Folner}
Let $I_i$ be a Folner sequence in $Q$ and $P_i$ be a Folner sequence in $N$. Then there is a subsequence $k_i$ such that 
 $F_i = \widehat{I_i} P_{k_i}$
is a Folner sequence in $G$.
\end{prop}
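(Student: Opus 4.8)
The plan is to construct the subsequence $k_i$ so that the product sets $F_i = \widehat{I_i} P_{k_i}$ become asymptotically invariant under left multiplication, by exploiting two separate mechanisms: the Folner property of $I_i$ in $Q$ handles the "horizontal" directions, while for each fixed horizontal error the Folner property of $P_j$ in $N$ can be made to dominate by choosing $k_i$ large enough. First I would fix a generating structure: it suffices to verify the Folner condition for $gF_i \triangle F_i$ for every $g$ in a generating set of $G$, and since $G$ is generated by the lifts $\widehat{s}$ of a generating set of $Q$ together with a generating set of $N$, I would treat these two types of generators in turn.

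For a generator $n_0 \in N$: left multiplication by $n_0$ acts only within cosets of $N$. Write $F_i = \coprod_{s \in I_i} \widehat{s} P_{k_i}^{(s)}$, where $P_{k_i}^{(s)}$ is a translate of $P_{k_i}$ by an element of $N$ chosen so that $\widehat{s}$ times that set lands correctly; the point is that $n_0 \widehat{s} P_{k_i} = \widehat{s} (\widehat{s}^{-1} n_0 \widehat{s}) P_{k_i} = \widehat{s} n_0' P_{k_i}$ where $n_0' = n_0^{\widehat{s}}$ depends on $s$ but ranges over finitely many elements of $N$ as $s$ ranges over the (eventually) finitely many generators times $I_i$ — wait, $I_i$ grows, so $n_0'$ ranges over infinitely many conjugates. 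This is the crux: I would instead argue that for each \emph{individual} coset the symmetric difference $|n_0' P_{k_i} \triangle P_{k_i}| = |n_0' P_{k_i} \triangle P_{k_i}|$ need not be small uniformly, but Weiss's observation is that one only needs control after summing, and one can pass to $k_i$ large enough that the \emph{total} relative error over all $s \in I_i$ is small — this requires a diagonal/uniformity argument, and it is exactly here that the hypothesis that $P_j$ is a Folner sequence (for the conjugated generating sets, using that conjugation by a fixed $\widehat{s}$ is an automorphism) must be invoked carefully. The honest route, following Weiss, is: enlarge the generating set of $N$ to be invariant under conjugation by the finitely many generating lifts $\widehat{s}$ (their conjugates still generate, and a Folner sequence for one generating set is Folner for any other), so that $n_0 \widehat{s} P_{k_i}$ differs from $\widehat{s} P_{k_i}$ by a controlled amount \emph{independent of $s$}, giving $|n_0 F_i \triangle F_i| \le |I_i| \cdot \max_{s} |n_0^{\widehat{s}} P_{k_i} \triangle P_{k_i}| = |I_i| \cdot \varepsilon_{k_i} |P_{k_i}| = \varepsilon_{k_i}|F_i|$, which tends to $0$ once $k_i$ is chosen with $\varepsilon_{k_i} \to 0$.

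For a generator $\widehat{s_0}$ with $s_0$ in the generating set of $Q$: here $\widehat{s_0} \widehat{I_i} = \widehat{s_0 I_i} \cdot (\text{correction in } N)$, since $\widehat{s_0}\widehat{s} = \widehat{s_0 s} \, c(s_0,s)$ for a cocycle $c(s_0,s) \in N$. The horizontal part $s_0 I_i \triangle I_i$ is small by the Folner property of $I_i$ in $Q$; this contributes at most $|s_0 I_i \triangle I_i| \cdot |P_{k_i}| = o(|I_i|)|P_{k_i}| = o(|F_i|)$ to the symmetric difference. The remaining issue is the cocycle correction: on the overlap $s_0 I_i \cap I_i$, the set $\widehat{s_0} \widehat{s} P_{k_i} = \widehat{s_0 s} c(s_0,s) P_{k_i}$ must be compared with $\widehat{s_0 s} P_{k_i}$, and again this differs by $|c(s_0,s) P_{k_i} \triangle P_{k_i}|$. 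As above, the cocycle values $c(s_0,s)$ for $s_0$ a generator and $s$ ranging over \emph{all} of $Q$ need not lie in a finite set, so I would again enlarge the $N$-generating set: but now it cannot be made invariant under all these corrections. The correct fix, which is the genuinely delicate point, is to choose $k_i$ \emph{after} $I_i$ is fixed and to use that $I_i$ is a finite set, so that the finitely many cocycle values $\{c(s_0, s) : s \in I_i, s_0 \in \text{generators}\}$ form a finite subset $C_i$ of $N$; then pick $k_i$ large enough that $\max_{n \in C_i} |n P_{k_i} \triangle P_{k_i}| \le \delta_i |P_{k_i}|$ with $\delta_i \to 0$.

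Assembling these bounds: for each generator of $G$, $|gF_i \triangle F_i| \le o(|F_i|)$, so $F_i = \widehat{I_i} P_{k_i}$ is a Folner sequence in $G$. The main obstacle, as indicated, is organizing the choice of $k_i$ to simultaneously dominate both the conjugation of $N$-generators by the (growing family of) elements of $\widehat{I_i}$ and the cocycle corrections indexed by $I_i$; the key point that makes it work is that $I_i$ is \emph{finite} for each fixed $i$, so only finitely many elements of $N$ need to be controlled at stage $i$, and one can choose $k_i \to \infty$ fast enough, invoking the Folner property of $P_j$ for the finitely many relevant translates. I would present this as: fix $I_i$; let $S_i \subset N$ be the (finite) set of all conjugates $n_0^{\widehat{s}}$ and cocycle corrections $c(s_0,s)$ with $n_0, s_0$ generators and $s \in I_i \cup s_0^{-1} I_i$; choose $k_i$ so large that $|n P_{k_i} \triangle P_{k_i}| < \tfrac{1}{i}|P_{k_i}|$ for all $n \in S_i$ and also $k_i > k_{i-1}$; then verify the two estimates above line by line.
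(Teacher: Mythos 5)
The paper does not prove this proposition; it quotes it from Weiss \cite{weiss2001monotilable}, so there is no in-paper argument to compare against. The final paragraph of your proposal is the correct argument: at stage $i$ the set $I_i$ is \emph{finite}, so the conjugates $n_0^{\widehat{s}}$ and cocycle corrections $c(s_0,s)$, with $n_0,s_0$ ranging over generators and $s$ ranging over $I_i\cup s_0^{-1}I_i$, form a finite set $S_i\subset N$; choosing $k_i\ge k_{i-1}$ with $|nP_{k_i}\triangle P_{k_i}|<|P_{k_i}|/i$ for all $n\in S_i$ (possible because $S_i$ is finite and $P_j$ is Folner in $N$) makes both of your estimates go through.

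The middle of the proposal, however, contains a false claim that should be deleted rather than kept as a first attempt. Enlarging a finite generating set of $N$ to be invariant under conjugation by the generating lifts $\widehat{s_j}$ does \emph{not} control $|n_0^{\widehat{s}}P_{k_i}\triangle P_{k_i}|$ uniformly over $s\in I_i$: closure under the finitely many conjugations $\widehat{s_j}^{\pm1}$ forces closure under the whole $Q$-action, which in general produces an infinite subset of $N$ (already in $\ZZ_2\wr\ZZ$ the conjugates of a single lamp by powers of the shift are pairwise distinct), and ``$P_j$ is Folner with respect to a finite generating set'' is not the same as ``$P_j$ is uniformly almost-invariant over an infinite subset of $N$.'' You notice this obstruction yourself a few lines later in the cocycle case; the remedy of choosing $k_i$ only \emph{after} $I_i$ is fixed is the correct and only fix and should constitute the entire proof. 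One smaller point: in this paper's applications $N = B^X$ is typically \emph{not} finitely generated, so ``a generating set of $N$'' is unavailable; instead fix a finite generating set $\Sigma$ of $G$, write each $g\in\Sigma$ as $g = \widehat{q}\,n$ with $q\in Q$ and $n\in N$, and use $|gF_i\triangle F_i|\le|nF_i\triangle F_i|+|\widehat{q}F_i\triangle F_i|$, which reduces the verification to the finitely many $n$'s and $\widehat{q}$'s so produced — exactly the cases your two estimates handle.
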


Unfortunately Proposition \ref{prop:adapted and uniformly Folner gives product Folner} will not suffice for our needs, as we will usually not be able to pass to a subsequence of the Folner sequence $P_i$ in the group $N$. We will have to rely on the more precise but specialized Lemma  \ref{lem:F_i is Folner}.

\section{Subgroups and invariant random subgroups of   metabelian groups}
\label{sec:subs and irs of metabelian}

Let $G$ be a finitely generated metabelian group. Therefore the group $G$ is the intermediate term in a   short exact sequence
$$ 1 \to N \to G \to Q \to 1$$
such that    the normal subgroup $N$ as well as the finitely generated quotient subgroup $Q$ are abelian.   The conjugation action of $G$ on $N$ factors through the quotient $Q$. Given an element $q \in Q$ we will frequently use the notation $n \mapsto n^q \in N$. The abelian group $N$ can be regarded as a finitely generated $\ZZ[Q]$-module with respect  to this conjugation action of $Q$.
	
	\vspace{7.5pt}
	
	\emph{From now on, when studying metabelian groups we will use multiplicative notation for the quotient group $Q$ and additive notation for the normal subgroup $N$.}


\subsection*{Residual finiteness in metabelian groups} In our work we crucially rely on the following classical theorem  \cite{hall1959finiteness}.

\begin{theorem}[Hall]
\label{thm:Hall}
Finitely generated metabelian groups are residually finite.
\end{theorem}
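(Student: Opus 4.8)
The plan is to prove Hall's theorem that finitely generated metabelian groups are residually finite, following the classical module-theoretic argument. The key structural observation is already recorded in the excerpt: with $1 \to N \to G \to Q \to 1$ where $N$ and $Q$ are abelian and $Q$ finitely generated, the group $N$ becomes a finitely generated module over the commutative Noetherian ring $\ZZ[Q]$. The proof splits into two independent tasks. First, separate a nontrivial element of $N$ from the identity by a finite quotient; second, separate an element lying outside $N$.

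For the second task I would observe that $G/N \cong Q$ is a finitely generated abelian group, hence residually finite, so any $g \in G \setminus N$ is detected in a finite quotient of $Q$ pulled back to $G$. So the whole difficulty concentrates in the first task: given $0 \neq n \in N$, find a finite-index normal subgroup of $G$ avoiding $n$. Here I would invoke the main input from commutative algebra, the Krull intersection theorem (or Artin--Rees): since $\ZZ[Q]$ is Noetherian and $N$ is a finitely generated $\ZZ[Q]$-module, for any proper ideal $\mathfrak{a}$ contained in the Jacobson radical one has $\bigcap_k \mathfrak{a}^k N = 0$; more usefully, one shows that the $\ZZ[Q]$-submodules of finite index in $N$ intersect trivially. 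Concretely, the plan is: pick a maximal ideal $\mathfrak{m}$ of $\ZZ[Q]$ with $n \notin \mathfrak{m}N$ — such $\mathfrak{m}$ exists because $N/\,(\text{something})$ localizes appropriately — and then $\mathfrak{m}^k N$ for large $k$, or rather a cofinite-index submodule built from $\mathfrak{m}$, gives the desired separating submodule. The cleanest route: the residue field $\ZZ[Q]/\mathfrak{m}$ is finite (this uses that $\ZZ[Q]$ is a finitely generated $\ZZ$-algebra, so maximal ideals have finite residue fields by the Nullstellensatz over $\ZZ$), hence $N/\mathfrak{m}N$ is finite, and iterating, $N/\mathfrak{m}^k N$ is finite; choosing $k$ with $n \notin \mathfrak{m}^k N$ (possible by Krull intersection applied to the localization at $\mathfrak{m}$) yields a finite-index $\ZZ[Q]$-submodule $M = \mathfrak{m}^k N$ with $n \notin M$. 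Since $M$ is $Q$-invariant it is normal in $G$, and $G/M$ is a metabelian group with $N/M$ finite and $G/N$ abelian finitely generated.

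At this point $G/M$ need not yet be finite, but it is a finitely generated abelian-by-finite group, in particular virtually a finitely generated abelian group, hence residually finite; so $nM$, being nontrivial in $G/M$, is separated from the identity in a finite quotient of $G/M$, and pulling back finishes the case $n \in N$. Combining the two cases: every nonidentity element of $G$ survives in some finite quotient, which is exactly residual finiteness.

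The main obstacle I anticipate is the commutative-algebra step isolating the separating submodule $M \lhd G$ of finite index: one must correctly package the Krull intersection theorem and the finiteness of residue fields of $\ZZ[Q]$ at maximal ideals, and verify that the submodule produced is genuinely of finite index in $N$ (not merely of finite index in some localization) and still avoids the chosen element $n$. A careful treatment would either cite Hall's original paper \cite{hall1959finiteness} or the standard account, rather than reproducing the localization argument in full; since the theorem is quoted here as a black box, the honest move is simply to attribute it and refer the reader to \cite{hall1959finiteness}.
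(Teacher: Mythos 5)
The paper does not prove Theorem~\ref{thm:Hall} at all: it is quoted as a black box and attributed to \cite{hall1959finiteness}, as you correctly observe at the end. So there is no in-paper argument to compare against; what you have written is a sketch of Hall's classical module-theoretic proof, and the question is only whether the sketch is sound. It is broadly correct, but two of its steps as written do not go through and need the following repairs.

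First, the claim that one can ``pick a maximal ideal $\mathfrak{m}$ of $\ZZ[Q]$ with $n \notin \mathfrak{m}N$'' is false in general. For example with $R=\ZZ$, $N=\ZZ/4\ZZ$ and $n=2$, one has $n\in\mathfrak{m}N$ for every maximal ideal $\mathfrak{m}$. Your ``cleanest route'' correctly shifts to $\mathfrak{m}^k N$ with $k$ chosen by the Krull intersection theorem in the localization, but for that to detect $n$ you must first choose $\mathfrak{m}$ containing $\mathrm{Ann}_{\ZZ[Q]}(n)$, so that $n/1\neq 0$ in $N_\mathfrak{m}$; for a maximal ideal that misses $\mathrm{Ann}(n)$ the localization kills $n$ and $n\in\mathfrak{m}^kN$ for all $k$ (in the example above this happens for every odd prime). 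With that choice of $\mathfrak{m}$, and using that $\ZZ[Q]$ is a finitely generated $\ZZ$-algebra so $\ZZ[Q]/\mathfrak{m}$ is finite, the step produces the desired finite-index $\ZZ[Q]$-submodule $M\le N$ with $n\notin M$.

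Second, $G/M$ is \emph{finite-by-(f.g.\ abelian)}, not abelian-by-finite as stated. The two are in fact equivalent here, but the passage requires an argument: a finitely generated group whose commutator subgroup is finite has centre of finite index (the conjugacy class of each generator $g$ lies in $g\cdot(G/M)'$, so each $C_{G/M}(g)$ has finite index), and that centre is finitely generated abelian, giving residual finiteness of $G/M$. Alternatively one can pass to the finite-index subgroup of $G/M$ acting trivially on $N/M$, which is a finite central extension of a finitely generated abelian group, and use a standard core-intersection argument to return to $G$. Either way this gap must be filled; as written, ``abelian-by-finite'' is asserted without justification and is not the obvious structure of $G/M$. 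With these two corrections the sketch is a sound account of Hall's argument, although for the purposes of this paper simply citing \cite{hall1959finiteness} is, as you say, the appropriate choice.
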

Hall's theorem allows us to find \enquote{Chabauty approximations from above} inside the subgroup $N$, in the following sense.
\begin{prop}
\label{prop:finite index normal subgroups in a metabelian group}
Let $H$ be a   subgroup of $G$ with Goursat triplet
$$ [H] = [Q_H, N_H, \alpha_H].$$ 
Assume that the metabelian subgroup $HN$ is finitely generated. Then for every finite subset  $T \subset N$   there is  a finite index subgroup $M \le N$ with
$$ M \cap T = N_H \cap T, \quad N_H \le M  \le N, \quad \text{and} \quad  H  \le \mathrm{N}_G(M ).$$
\end{prop}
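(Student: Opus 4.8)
The plan is to pass to the quotient group $\overline{HN} := HN/N_H$ and invoke Hall's Theorem \ref{thm:Hall} there. First I would check that this quotient makes sense: $N_H = H\cap N$ is normalized by $H$ — this is built into the Goursat triplet, since $\alpha_H$ takes values in $\mathrm{N}_G(N_H)/N_H$ — and it is also normalized by the abelian group $N$, so $N_H \nrm HN$. Since $HN$ is assumed finitely generated, $\overline{HN}$ is a finitely generated metabelian group, with normal abelian subgroup $\bar N := N/N_H$ and with $\overline{HN}/\bar N \cong Q_H$; the quotient map $N \to \bar N$ is equivariant for the conjugation action of $HN$, and this action on $\bar N$ factors through $Q_H$.

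Next I would use residual finiteness of $\overline{HN}$ (Hall's Theorem \ref{thm:Hall}) applied to the finite subset $S := \{\, t + N_H : t \in T,\ t\notin N_H \,\}$ of $\bar N \setminus \{0\}$: for each element of $S$ there is a finite index normal subgroup of $\overline{HN}$ missing it, and intersecting the finitely many of these gives a single finite index normal subgroup $\bar M' \nrm \overline{HN}$ with $\bar M' \cap S = \emptyset$. I would then set $\bar M := \bar M' \cap \bar N$; this is again normal in $\overline{HN}$, has finite index in $\bar N$, and still satisfies $\bar M \cap S = \emptyset$. Finally I would take $M \le N$ to be the preimage of $\bar M$ under $N \to \bar N$.

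It then remains to verify the three asserted properties. Two are immediate: $N_H \le M \le N$ by construction, and $[N:M] = [\bar N : \bar M] < \infty$. For $H \le \mathrm{N}_G(M)$, note that $\bar M$, being normal in $\overline{HN}$, is preserved by conjugation by the images of elements of $H$; since the quotient map $N \to \bar N$ is $H$-equivariant for conjugation, its preimage $M$ is preserved by conjugation by $H$, i.e. $H \le \mathrm{N}_G(M)$. For $M \cap T = N_H \cap T$: the inclusion $\supseteq$ holds since $N_H \le M$, and if $t \in T \setminus N_H$ then $t + N_H \in S$, so $t + N_H \notin \bar M$ and hence $t \notin M$, which gives $M \cap T \subseteq N_H$.

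I do not expect a genuine obstacle here; the one point that needs care is arranging \emph{simultaneously} that the subgroup have finite index in $N$ (not merely in $HN$) and be normalized by $H$ — this is exactly why one intersects the normal subgroup $\bar M'$ furnished by Hall's theorem with $\bar N$, rather than using $\bar M'$ itself. It is also worth pointing out where the hypothesis is used: finite generation of $HN$ is precisely what makes $\overline{HN}$ finitely generated, so that Hall's theorem is applicable.
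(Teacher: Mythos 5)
Your proof is correct and follows essentially the same route as the paper: pass to $HN/N_H$, invoke Hall's theorem to obtain a finite index normal subgroup avoiding the images of $T \setminus N_H$, then pull back and intersect with $N$ to get $M$. The only cosmetic difference is that you intersect with $\bar N$ inside the quotient before taking preimages, whereas the paper takes the preimage first and then intersects with $N$; these produce the same subgroup.
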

 \begin{proof}
The subgroup $N_H$ is normalized by the subgroup $H$ as well as by the abelian group $N$. Therefore $N_H$ is a normal subgroup of $HN$. Let $\pi_H$ denote the quotient homomorphism $\pi_H:HN \to HN/N_H$.
	
The finitely generated  metabelian group $HN/N_H $ is residually finite by Hall's theorem.  So there is a finite index normal subgroup $\overline{M} \nrm HN/N_H $ such that $\pi_H(n) \notin \overline{M}$ for every element $n \in T \setminus N_H$.

 Consider the subgroup  $M = \pi_H^{-1}(\overline{M} ) \cap N$.   Since $\pi_H^{-1}(\overline{M} )$ has finite index in $HN$, the subgroup $M $ has finite index in $N$. It is clear that $N_H = \ker \pi_H \le M$. The condition $\overline{M} \cap \pi_H(T) = N_H$ implies that $M \cap T = N_H \cap T$. Finally, since $\overline{M} $ is normal in $HN/N_H$ it follows that $M $ is normalized by $H $.
\end{proof}

\subsection*{Invariant random subgroups of metabelian groups} Let $\pi$ be the natural quotient map from $\Sub{G}$ to $\Sub{Q}$. The map $\pi$ is Borel and $G$-equivariant according to Proposition \ref{prop:pushing forward invariant random subgroups}. Let $\rho$ be the restriction map from $\Sub{G}$ to $\Sub{N}$. The map $\rho$ is continuous and $N$-equivariant. 
 In light of the Goursat triplet description established in Proposition 	\ref{prop:description of subgroups of direct products} the fiber of the product map
$$ \pi \times \rho : \Sub{G} \to \Sub{Q} \times \Sub{N} $$
over a given pair of subgroups $(R,L)$ with $R \le Q$ and $L \le N$ consists of all   possible   homomorphisms $\alpha$ of the form
$$ \alpha : R \to \mathrm{N}_G(L)/L$$
where $\alpha(r) L \subset \widehat{r} N$ for all elements $r \in R$.

Recall that $Q$ is a finitely generated abelian group. In particular $Q$ is Noetherian in the sense that every subgroup of $Q$ is finitely generated. Therefore there at most countably many homomorphisms whose domain is a subgroup of $Q$. By the previous paragraph, the fibers of the product map $\pi \times \rho$ are  countable as well. 
Let us apply these observations to the study of invariant random subgroups of $G$.

\begin{prop}
\label{prop:an element of an invariant random subgroup has finite orbit for N}
Let $\mu$ an   invariant random subgroup of the finitely generated metabelian group $G$. Then $\mu$-almost every subgroup $H$ satisfies $[N:\mathrm{N}_N(H)] < \infty$.
\end{prop}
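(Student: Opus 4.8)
The plan is to analyze the conjugation action of the abelian normal subgroup $N$ on $\Sub{G}$ through the Goursat parametrization of Proposition~\ref{prop:description of subgroups of direct products} together with a disintegration of $\mu$. Since $N$ is abelian, conjugation by an element $n\in N$ fixes $N$ pointwise, so $N_{H^{n}}=N_{H}$ and $Q_{H^{n}}=Q_{H}$ for every subgroup $H\le G$. Hence the Borel $G$-equivariant map
\[
\Phi:=\pi\times\rho:\Sub{G}\longrightarrow\Sub{Q}\times\Sub{N},\qquad \Phi(H)=(Q_{H},N_{H}),
\]
(Borel by Proposition~\ref{prop:pushing forward invariant random subgroups} and continuity of the restriction $\rho$) intertwines the $N$-conjugation action on $\Sub{G}$ with the \emph{trivial} $N$-action on the target, so $c_{n}$ preserves every fibre $\Phi^{-1}(R,L)$, and $\mathrm{N}_{N}(H)$ is precisely the $N$-stabilizer of $H$. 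Unwinding Proposition~\ref{prop:triple notation for conjugates} with $g=n\in N$ (so $q=nN=e$): the triples $[H]$ and $[H^{n}]$ agree in their first two entries, while $\alpha_{H^{n}}(r)N_{H}=\bigl(\alpha_{H}(r)N_{H}\bigr)^{n}=\alpha_{H}(r)N_{H}+\bigl((1-r)n+N_{H}\bigr)$ for $r\in Q_{H}$, where $(1-r)n:=n-n^{r}\in N$. Therefore $H^{n}=H$ if and only if $(1-r)n\in N_{H}$ for all $r\in Q_{H}$; in particular the $N$-stabilizer of $H$ depends only on $\Phi(H)=(R,L)$, call it $N_{0}(R,L)$. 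Finally, the fibre $\Phi^{-1}(R,L)$ is in bijection, via $H\mapsto\alpha_{H}$, with the set of homomorphisms $R\to\mathrm{N}_{G}(L)/L$ compatible with the lifts $\widehat{r}$, and this set is \emph{countable}, since $R$ is finitely generated (as $Q$ is Noetherian) and $\mathrm{N}_{G}(L)/L$ is countable.

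Next I would disintegrate $\mu=\int\mu_{y}\,d\Phi_{*}\mu(y)$ over $\Phi$, with $\mu_{y}$ a probability measure supported on $\Phi^{-1}(y)$. Because $(c_{n})_{*}\mu=\mu$ and $c_{n}$ lies over the identity of $\Sub{Q}\times\Sub{N}$, essential uniqueness of the disintegration gives, for each fixed $n\in N$, that $(c_{n})_{*}\mu_{y}=\mu_{y}$ for $\Phi_{*}\mu$-almost every $y$; since $N$ is countable, for $\Phi_{*}\mu$-almost every $y=(R,L)$ the probability measure $\mu_{y}$ on the countable fibre $\Phi^{-1}(R,L)$ is invariant under the conjugation action of all of $N$, which (all point stabilizers being $N_{0}(R,L)$) factors through a \emph{free} action of $N/N_{0}(R,L)$. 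If this group were infinite, each of its orbits would be infinite, so $\mu_{y}$ restricted to any one orbit would be a nonzero, translation-invariant, finite measure on a copy of an infinite group --- impossible; as $\Phi^{-1}(R,L)$ is a countable union of orbits this would force $\mu_{y}\bigl(\Phi^{-1}(R,L)\bigr)=0$, a contradiction. Hence $[N:N_{0}(R,L)]<\infty$ for $\Phi_{*}\mu$-almost every $(R,L)$, and since $[N:\mathrm{N}_{N}(H)]=[N:N_{0}(\Phi(H))]$ we conclude
\[
\mu\bigl(\{\,H:[N:\mathrm{N}_{N}(H)]=\infty\,\}\bigr)=\Phi_{*}\mu\bigl(\{\,(R,L):[N:N_{0}(R,L)]=\infty\,\}\bigr)=0.
\]

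I expect the main difficulty to be bookkeeping rather than conceptual: one must extract from Proposition~\ref{prop:triple notation for conjugates} the exact effect of $N$-conjugation on Goursat triples, so that on each fibre it genuinely becomes a free action of the abelian quotient $N/N_{0}(R,L)$, and one must check the measure-theoretic routine --- that $\Phi$ is Borel, that $\{\,H:[N:\mathrm{N}_{N}(H)]=\infty\,\}$ is Borel, that a disintegration exists (the relevant spaces being compact metrizable), and that the fibre measures can be chosen to satisfy the invariance simultaneously for all $n\in N$. It is worth noting that, in contrast with most of this paper, the argument uses neither the amenability of $G$ nor Hall's Theorem~\ref{thm:Hall}: it rests only on $N$ being abelian and $Q$ being Noetherian, together with the countability of $G$.
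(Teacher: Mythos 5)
Your proof is correct, and the core machinery --- a Borel, $G$-equivariant map from $\Sub{G}$ with countable fibres, disintegration of $\mu$ along it, $N$-invariance of the fibre measures, and the impossibility of an infinite orbit carrying positive mass --- is the same as the paper's. The difference is how you kill the $Q_H$ degree of freedom. The paper first passes to the ergodic case, uses the atomicity of the ergodic IRS $\pi_*\mu$ of the abelian group $Q$ to pin down $Q_H = Q_\mu$ almost surely, and then disintegrates along the restriction map $\rho$ alone, whose fibres are then a.e.\ contained in a single countable set. You instead disintegrate along the product map $\Phi = \pi \times \rho$, whose fibres are unconditionally countable, which bypasses the ergodic decomposition and the atomicity argument entirely and works for general $\mu$ in one stroke. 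You also make explicit the pleasant observation (via Proposition~\ref{prop:triple notation for conjugates}, with $g=n\in N$) that the $N$-stabilizer $\mathrm{N}_N(H)$ depends only on $\Phi(H)=(Q_H,N_H)$, so the action on each fibre is a genuinely \emph{free} action of the quotient $N/N_0(Q_H,N_H)$; the paper instead appeals to the general fact that a probability-measure-preserving action of a countable group on a countable set has a.e.\ finite orbits, which does not require constancy of the stabilizer. Your version is a bit tighter in both respects; the paper's version parallels how it treats $\pi_*\mu$ elsewhere. (Minor: your formula $(1-r)n = n - n^r$ is off by an inversion of $r$ against the paper's commutator convention $[x,y]=x^yx^{-1}$, which yields $[\widehat{r},n]=n^{r^{-1}}-n$; this is harmless since $r$ ranges over a subgroup, so the two conditions on $\mathrm{N}_N(H)$ agree.)
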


\begin{proof}
The ergodic decomposition allows us to assume without loss of generality that $\mu$ is ergodic. In particular $\pi_* \mu$ is an ergodic invariant random subgroup of the abelian group $Q$ and as such must be atomic. Therefore there is a subgroup $Q_\mu$ of $Q$ such that $\mu$-almost every subgroup $H$ satisfies $\pi(H) = Q_\mu$.

Consider the push-forward $\nu = \rho_* \mu \in \IRS{N}$ where   $\rho : \Sub{G} \to \Sub{N}$ is the restriction map as above. By the discussion preceding this proposition, the fibers of the map $\rho$ are $\mu$-almost surely countable. We would like  to conclude that the $N$-orbit by conjugation of $\mu$-almost every subgroup $H$ of $G$ is finite.

 With the above goal in mind, consider the disintegration of the probability measure  $\mu$ along the Borel map $\rho$. This is  a $\nu$-measurable map 
$$\lambda : \Sub{N} \to \Probs{G}, \quad \lambda : L \mapsto \lambda_L$$ 
with the following properties
\begin{itemize}
\item $\lambda_L(\rho^{-1}(L)) = 1$ for $\nu$-almost every subgroup $L \in \Sub{N}$, and
\item $\mu = \int_{\Sub{N}} \lambda_L \; \mathrm{d} \nu(L)$.
\end{itemize}
This means that $\lambda_L$ is a probability measure giving total measure one to the countable fiber $p^{-1}(L)$ for $\nu$-almost every subgroup $L$  of $ N$.  

 The conjugation action of $N$ preserves the fiber $\rho^{-1}(L)$ as well as the probability measure $\lambda_L$ for $\nu$-almost every subgroup $L$, see Proposition  \ref{prop:triple notation for conjugates}. Recall that a probability measure preserving action on a countable set has finite orbits.   Therefore the $N$-orbit  of $\lambda_L$-almost every point of the fiber    $\rho^{-1}(L)$ is finite $\nu$-almost surely. 
 
 The conclusion follows by relying on the integral formula $\mu = \int \lambda_L \mathrm{d} \nu$ to verify that the condition  $[N:\mathrm{N}_N(H)] < \infty$ is satisfied by $\mu$-almost every subgroup $H \le G$.
\end{proof}

\subsection*{On subgroups with finite orbit for $N$-conjugation}
Let $H$ be a subgroup of the metabelian group  $G$. We now  discuss  certain consequences of the algebraic information    $[N:\mathrm{N}_N(H)] < \infty$ to be used in \S\ref{sec:controlled approximations} below.



\begin{prop}
\label{prop:finite orbit implies finitely many elements}
 If    $[N:\mathrm{N}_N(H)] < \infty$ then  the subset $\left[h,N\right] N_H$  of the coset space $N/N_H$ is finite 
  for every $h \in H$. 
\end{prop}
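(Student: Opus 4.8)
The plan is to unpack what the hypothesis $[N:\mathrm{N}_N(H)] < \infty$ means for the action of a single element $h \in H$ on the abelian group $N$, and then to push this information down to the quotient $N/N_H$. Recall that $N$ is abelian and written additively, while conjugation by $h$ acts on $N$ as an automorphism; the fixed points of this automorphism are exactly $\mathrm{C}_N(h) = \{n \in N : n^h = n\}$, which contains $\mathrm{N}_N(H)$ whenever $h \in H$ — indeed if $n$ normalizes $H$ then $[h,n] \in H \cap N = N_H$, so at the very least $\mathrm{N}_N(H)$ maps into the set where $[h,n] \in N_H$. So the first step is to observe that the map $n \mapsto [h,n]$ is, since $N$ is abelian, a \emph{group homomorphism} $\varphi_h : N \to N$ (this is immediate from the commutator identity $[x,yz] = [x,z][x,y]^z$ together with the fact that $Q$ acts trivially-up-to-conjugacy, i.e. $[x,y]^z = [x,y]$ for $x \in N$; alternatively $\varphi_h(n) = n^h - n$ in additive notation, which is visibly additive).

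The second step is the counting argument. The kernel of $\varphi_h$ is $\mathrm{C}_N(h)$, and since $\mathrm{N}_N(H) \le \mathrm{C}_N(h)$ we get $[N : \mathrm{C}_N(h)] \le [N : \mathrm{N}_N(H)] < \infty$. Therefore the image $\varphi_h(N) = [h,N]$ is a subgroup of $N$ isomorphic to $N/\mathrm{C}_N(h)$, hence \emph{finite}. In particular $[h,N]$ is a finite subset of $N$, so its image $[h,N] N_H / N_H$ inside $N/N_H$ is a finite subset (being the image of a finite set under the quotient map $N \to N/N_H$). That is exactly the assertion. Note we do not even need $N_H$ to be of finite index or the subgroup $HN$ to be finitely generated; finiteness of $[h,N]$ alone suffices.

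One small point to be careful about: the statement writes $[h,N] N_H$ as a subset of $N/N_H$, so I should make explicit that $N_H \le N$ (true, since $N_H = H \cap N$) and that therefore $N/N_H$ is a well-defined abelian group, and that the natural reduction of the finite set $[h,N]$ is what is meant. The only genuine content is the homomorphism observation plus $\ker \varphi_h \supseteq \mathrm{N}_N(H)$; everything else is bookkeeping. I expect no real obstacle here — the main (mild) subtlety is just getting the commutator conventions straight, namely confirming that in the metabelian setting $\varphi_h(n_1 + n_2) = \varphi_h(n_1) + \varphi_h(n_2)$, which follows from $[x, n_1 n_2] = [x,n_2][x,n_1]^{n_2} = [x,n_2][x,n_1]$ because $[x,n_1] \in N$ is centralized by $n_2 \in N$.
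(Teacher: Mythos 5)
There is a genuine gap in the counting step. You first correctly observe that if $n \in \mathrm{N}_N(H)$ then $[h,n] \in H \cap N = N_H$; that shows $\varphi_h(\mathrm{N}_N(H)) \subseteq N_H$, nothing more. But you then upgrade this to the much stronger claim $\mathrm{N}_N(H) \le \mathrm{C}_N(h) = \ker\varphi_h$, which is false in general: for $n$ to centralize $h$ you would need $[h,n]$ to be the \emph{identity}, not merely an element of $N_H$. A concrete counterexample is $G = \ZZ_2 \wr \ZZ$, $H = G$, and $h = t$ the generator of $\ZZ$. Then $\mathrm{N}_N(H) = N$ and $[N:\mathrm{N}_N(H)] = 1$, yet $\mathrm{C}_N(t) = 0$ (the shift has no nonzero fixed points) and $[t,N]$ is an infinite subset of $N$. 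Consequently your intermediate assertion ``the image $\varphi_h(N) = [h,N]$ is a finite subgroup of $N$'' is wrong, and so is the closing remark that ``finiteness of $[h,N]$ alone suffices.'' (The proposition is still true in this example, but only because $N_H = N$ makes $N/N_H$ trivial.)

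The repair is exactly the inclusion you yourself wrote down before over-strengthening it. Since $\varphi_h: N \to N$ is a homomorphism with $\varphi_h(\mathrm{N}_N(H)) \subseteq N_H$, the composite $N \to N \to N/N_H$ kills $\mathrm{N}_N(H)$ and hence factors through the finite group $N/\mathrm{N}_N(H)$; its image, which is $[h,N] N_H$ viewed in $N/N_H$, is therefore finite. This is precisely the paper's argument: show $f_h(n) = [h,n]$ is additive (your commutator-identity justification is the same as the paper's, modulo a harmless sign slip --- with the paper's conventions $[h,n] = n^{h^{-1}} - n$ rather than $n^h - n$), note $f_h(\mathrm{N}_N(H)) \subseteq N_H$, and conclude finiteness \emph{in the quotient} $N/N_H$, not in $N$.
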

\begin{proof}
Let $h \in H$ be fixed. Consider the map  $f_h$ given by
$$ f_h :   N \to N, \quad f_h(n) = \left[h,n\right].$$ 
We claim that the map $f_h$ is a group homomorphism. Indeed, given any pair of elements $n,m \in N$ it follows from Equation (\ref{eq:standard commutator identities}) that 
$$ f_h(n + m) = \left[h,n+m\right] + N_H = \left[h,n\right] + \left[h,m\right]^n + N_H = f_h(n)  + f_h(m).$$
Observe that  $f_h\left(\mathrm{N}_N(H)\right)  \subset N_H$. The assumption   $[N:\mathrm{N}_N(H)] < \infty$ implies that $f_h(N)= [h,N]$ is finite in the coset space $N/N_H$, as required.
\end{proof}

The following Corollary \ref{cor:technical lemma on Psi_q} is a more sophisticated  variant of Proposition \ref{prop:finite orbit implies finitely many elements} where we take into account an ascending sequence $M_i$ of subgroups of $N$.  Recall that a sequence of subsets  or subgroups  of $G$ is exhausting if it converges to $G$ in the Chabauty topology, see Definition \ref{def:exhausting sequence}. 
\begin{cor}
	\label{cor:technical lemma on Psi_q}
Let $M_i$ be an exhausting sequence of subgroups of $N$ such that $H \le \mathrm{N}_G(M_i)$ for all $ i\in \NN$. If $\left[N:\mathrm{N}_N(H)\right] < \infty $ then   for every   element $h \in H$ there is a finite subset $\Phi_h \subset N$ such that
$$ \left[h,M_i\right]  \subset \Phi_h + (H \cap M_i)$$ 
for 
   all $i \in \NN$ sufficiently large. 
\end{cor}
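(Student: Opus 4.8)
The plan is to bootstrap from Proposition \ref{prop:finite orbit implies finitely many elements}, which already gives finiteness of $[h,N]N_H$ in $N/N_H$, and then to control how the relevant finite set of commutator values interacts with the exhausting sequence $M_i$. First I would fix $h \in H$ and consider the homomorphism $f_h : N \to N$, $f_h(n) = [h,n]$, which was shown to be a homomorphism killing $\mathrm{N}_N(H)$; hence its image $[h,N]$ is finite modulo $N_H$. Choose finitely many elements $n_1,\ldots,n_r \in N$ whose values $[h,n_1],\ldots,[h,n_r]$ represent all cosets of $N_H$ met by $[h,N]$, and let $\Phi_h^0 = \{[h,n_1],\ldots,[h,n_r]\}$ be this finite set. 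Since the $M_i$ exhaust $N$, all of $n_1,\ldots,n_r$ lie in $M_i$ for $i$ large, so for such $i$ we have $[h,M_i] \subset \Phi_h^0 + N_H$ (the inclusion $[h,M_i]\subset[h,N]\subset \Phi_h^0 + N_H$ is immediate, but we need more: that we can replace $N_H$ by $H\cap M_i$).

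The crux is therefore the refinement from $N_H$ to $H \cap M_i$. Note $N_H = H \cap N \supseteq H \cap M_i$, so $\Phi_h^0 + N_H$ is a priori larger than $\Phi_h^0 + (H\cap M_i)$; I need to produce the extra elements to absorb the gap. The key point is that each commutator $[h,n]$ with $n \in M_i$ actually lies in $H \cap M_i$ already, NOT merely in $N_H$: indeed $[h,n] = n^{-h}\cdot\, $(using $[x,y]=x^yx^{-1}$, so $[h,n] = h^{-1}n^{-1}h\cdot n$ in additive/multiplicative mixed notation — more carefully, with additive notation on $N$, $[h,n] = n^h - n \in N$), and since $n \in M_i$ and $H$ normalizes $M_i$ we get $n^h \in M_i$, hence $[h,n] = n^h - n \in M_i$; and clearly $[h,n] \in H$ because $h\in H$ and (writing $[h,n] = h^{-1}\cdot h^n$ with $h^n \in H$ since $H$ need not be normal — careful here) — the cleanest route is: $[h,n]$ need not lie in $H$ in general. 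So instead I would argue as follows. We have $[h,M_i] \subset M_i$ as just shown. Also $f_h(M_i \cap \mathrm{N}_N(H)) \subset N_H \cap M_i$, and since $[M_i : M_i \cap \mathrm{N}_N(H)]$ is finite (bounded by $[N:\mathrm{N}_N(H)]$), the image $f_h(M_i) = [h,M_i]$ is finite modulo $N_H \cap M_i$. Pick coset representatives; but these representatives may depend on $i$, which is exactly the obstacle.

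To get a single finite $\Phi_h$ working for all large $i$, I would exploit the exhausting/ascending structure. The sequence $M_i$ can be taken ascending (an exhausting sequence of subgroups of the abelian $N$; if not ascending one passes to $M_i' = \langle M_1,\ldots,M_i\rangle$, still exhausting and still normalized by $H$). Then $N_H \cap M_i$ is ascending with union $N_H$. The finitely many cosets of $N_H$ hit by $[h,N]$ are represented by $[h,n_1],\ldots,[h,n_r]$; for $i$ large enough these $n_j$ all lie in $M_i$, and then for any $m \in M_i$ we have $[h,m] \equiv [h,n_j] \pmod{N_H}$ for some $j$, i.e. $[h,m] - [h,n_j] \in N_H$. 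But also $[h,m] - [h,n_j] = [h, m - n_j] \in [h,M_i] \subset M_i$ (using that $f_h$ is a homomorphism and $m - n_j \in M_i$), so $[h,m] - [h,n_j] \in N_H \cap M_i \subseteq H \cap M_i$. Hence $[h,M_i] \subset \{[h,n_1],\ldots,[h,n_r]\} + (H\cap M_i)$, and we set $\Phi_h = \{[h,n_1],\ldots,[h,n_r]\}$, a finite subset of $N$ independent of $i$. The main obstacle, as flagged, is precisely ensuring the representative set does not drift with $i$; this is resolved by first choosing the representatives $[h,n_j]$ globally from $[h,N]/N_H$ and only afterward noting that $n_j \in M_i$ eventually, combined with the homomorphism property of $f_h$ to land the difference inside $M_i$ as well as inside $N_H$.
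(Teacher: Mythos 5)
Your argument is essentially identical to the paper's: fix the homomorphism $f_h$, choose a global finite set of coset representatives $\Phi_h$ for $f_h(N)/N_H$, wait until $\Phi_h \subset M_i$, use $H \le \mathrm{N}_G(M_i)$ to get $f_h(M_i) \subset M_i$, and then observe that the leftover element of $N_H$ lands in $N_H \cap M_i = H \cap M_i$ (the paper subtracts $x - f$ directly where you invoke the homomorphism property, but these are the same observation). The aside about passing to an ascending sequence is unnecessary (Chabauty-exhaustion already places each finite set in $M_i$ eventually, and your final paragraph never uses ascendingness), and your unpacking of $[h,n]$ has a sign/inverse slip (under the paper's convention $[h,n]=n^{h^{-1}}-n$, not $n^h-n$), but neither affects the correctness of the argument.
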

\begin{proof}
Fix an element $h \in H$.   Consider the map $f_h: N \to N$ given by
$f_h(n) = \left[h,n\right]$. It is a group homomorphism, as verified in the proof of Proposition \ref{prop:finite orbit implies finitely many elements}. It follows that there is a finite subset $\Phi_h \subset N$ such that 
$$f_h(N) = \Phi_h + N_H \subset N.$$

The family $M_i$ exhausts $N$ and we may assume that $i$ is sufficiently large so that $\Phi_h \subset M_i$. Since $h \in H \le \mathrm{N}_G(M_i)$ we have $f_h(M_i) \subset M_i$. 

Observe that every element $x \in f_h(M_i)$ satisfies $x \in M_i$ as well as $x = f + n$ for a pair of elements $f \in \Phi_h \subset M_i$ and $n \in N_H$. Therefore $n = x-f \in M_i$ and hence $$x = f + n \in \Phi_h + (N_H \cap M_i).$$
 We conclude that
$$ f_h(M_i) \subset  \Phi_h + (H \cap M_i) $$
as required.
\end{proof}

While it is possible to prove Corollary \ref{cor:technical lemma on Psi_q} directly and then deduce Proposition \ref{prop:finite orbit implies finitely many elements} as a special case, the current outline seems to be more transparent.

\subsection*{Formulas for conjugates in   metabelian groups}

 The following is an elementary commutator computation     that can be used   to decide which elements in a given conjugacy class belong to a given subgroup.

\begin{prop}
	\label{prop:condition on conjugate to be in H}
Let $H$ be a subgroup of $G$. Consider a pair of elements $g,f \in G$ with
	$$ g = \widehat{q} n, \quad f = \widehat{r} m, \quad q,r \in Q, \quad \text{and} \quad n,m \in N.$$
	Then $g^f \in H$ if and only if both $q \in Q_H$ and
	$$  \left[g,f\right] + (n - a_q) \in N_H $$
	where $a_q\in N$ is any element satisfying 
	$$ \alpha_H(q)N_H = H \cap \widehat{q}N = \widehat{q}( a_q + N_H).$$
\end{prop}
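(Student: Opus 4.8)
The plan is to reduce everything to a single normal-form computation of $g^{f}$ inside the coset $\widehat q N$, where $q = gN$. First I would record the structural fact that, since $Q = G/N$ is abelian, one has $[G,G]\le N$; in particular $[g,f]\in N$. The commutator convention $[x,y]=x^{y}x^{-1}$ then gives $g^{f}=[g,f]\,g$, hence $g^{f}=[g,f]\,\widehat q\, n\in \widehat q N$, so that $g^{f}N=q$ in $Q$. This already disposes of one direction: if $g^{f}\in H$ then $q=g^{f}N\in HN/N=Q_{H}$, so $q\in Q_{H}$ is necessary. (Note $a_{q}$ is only defined once $q\in Q_{H}$, so the statement is to be read as: $q\notin Q_{H}$ forces $g^{f}\notin H$, while for $q\in Q_{H}$ the displayed congruence is the criterion.)

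For the remaining direction, assume $q\in Q_{H}$. By the Goursat description (Proposition \ref{prop:description of subgroups of direct products}) the intersection $H\cap\widehat q N$ is exactly the coset $\alpha_{H}(q)N_{H}=\widehat q(a_{q}+N_{H})$ of $N_{H}$ inside $\widehat q N$. Since we have shown $g^{f}\in\widehat q N$, membership $g^{f}\in H$ is equivalent to $g^{f}\in H\cap\widehat q N$, i.e. to $\widehat q^{\,-1}g^{f}\in a_{q}+N_{H}$. I would then compute $\widehat q^{\,-1}g^{f}$ directly: from $g^{f}=[g,f]\,\widehat q\, n$ and the fact that conjugation by $\widehat q$ on the abelian group $N$ is the $\ZZ[Q]$-module action of $q$, one gets $\widehat q^{\,-1}g^{f}=\widehat q^{\,-1}[g,f]\widehat q + n = [g,f]+n$ in $N$ (additive notation). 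Hence $g^{f}\in H$ iff $[g,f]+n-a_{q}\in N_{H}$, which together with the necessity of $q\in Q_{H}$ is precisely the asserted equivalence.

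The argument is essentially a bookkeeping exercise, so I do not expect a genuine obstacle; the two points that need care are (a) the lift ambiguity of $\widehat q$, which is harmless because $N$ is abelian and the conjugation action of $G$ on $N$ factors through $Q$, so the element $\widehat q^{\,-1}[g,f]\widehat q$ and the coset $\widehat q(a_{q}+N_{H})$ do not depend on the chosen lift; and (b) keeping the conventions straight throughout — $x^{y}=y^{-1}xy$, $[x,y]=x^{y}x^{-1}$, $N$ written additively and $Q$ multiplicatively — since a misplaced sign or a stray twist by $q$ in moving $\widehat q$ past an element of $N$ is the only thing that could derail the computation. If desired one can also expand $[g,f]$ itself via the identities in \eqref{eq:standard commutator identities} to make the criterion fully explicit in terms of $q,r,n,m$, but this is not needed for the statement as given.
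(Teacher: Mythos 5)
Your overall strategy is sound and matches the paper's: use the Goursat description to reduce membership of $g^f$ in $H$ to a condition inside the coset $\widehat q N$. But there is a genuine algebraic error in the key computation, and it is worth tracing carefully because it interacts with an apparent typo in the paper's stated commutator convention.

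You start from $[x,y]=x^yx^{-1}$ as announced in the introduction, so $g^f=[g,f]g=[g,f]\widehat q n$. To compare against $\widehat q(a_q+N_H)$ you then move $[g,f]$ past $\widehat q$, and you write $\widehat q^{-1}[g,f]\widehat q + n=[g,f]+n$. This identity is false: $\widehat q^{-1}[g,f]\widehat q$ is $[g,f]^q$ in the module notation, and $[g,f]^q\neq [g,f]$ in general (it is true, as you note, that $[g,f]^q$ is independent of the chosen lift $\widehat q$, but lift-independence is not the same as triviality of the $q$-action). With that step done correctly your approach yields the criterion $[g,f]^q+n-a_q\in N_H$, which differs from the stated criterion $[g,f]+n-a_q\in N_H$ by $[g,f]^q-[g,f]$, and this difference need not lie in $N_H$. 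A concrete counterexample in $\ZZ_2\wr\ZZ$: take $H=\langle t^2,e_0\rangle$ so $Q_H=2\ZZ$ and $N_H=\bigoplus_{\mathrm{even}}\ZZ_2$, with $g=t^2(e_{-1}+e_1)$, $f=e_1$, $q=t^2$, $n=e_{-1}+e_1$, $a_q=0$. Then $g^f=t^2\in H$, and indeed $[g,f]+n-a_q=0\in N_H$ with the convention $[g,f]=g^{-1}g^f$; but $g^fg^{-1}=(e_{-1}+e_1)^{t^{-2}}=e_1+e_3$, and $e_1+e_3+n=e_{-1}+e_3\notin N_H$, so your (uncorrected) criterion would wrongly declare $g^f\notin H$.

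The resolution is that the convention the paper actually uses — the one under which the displayed identities $[xy,z]=[x,z]^y[y,z]$ and $[x,yz]=[x,z][x,y]^z$ in \eqref{eq:standard commutator identities} hold, and under which the proof of Proposition \ref{prop:finite orbit implies finitely many elements} goes through — is $[x,y]=x^{-1}x^y$, i.e.\ $x^y=x[x,y]$. With that convention one has $g^f=g[g,f]=\widehat q\,n\,[g,f]=\widehat q(n+[g,f])$, no twist by $q$ appears at all, and the criterion $[g,f]+n-a_q\in N_H$ drops out immediately, which is exactly the paper's one-line proof. So: replace $g^f=[g,f]g$ by $g^f=g[g,f]$ (noting that the introduction's stated formula $[x,y]=x^yx^{-1}$ is inconsistent with the commutator identities that the paper relies on throughout), and then the spurious conjugation by $\widehat q$ disappears and your argument is correct and matches the paper.
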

 
 \begin{proof}
 We may use Equation (\ref{eq:standard commutator identities}) to express the  conjugate $g^f$ as
$$
	 g^f = g\left[g,f\right] = \widehat{q} (n + \left[g,f\right]).$$
	In particular $g^f \in H$ if and only if $q\in Q_H$ as well as
	$$ n + \left[g,f\right]  \in  a_q + N_H.$$
	The result follows.
\end{proof}

%

\section{Consistent homomorphisms}


Let $A$ and $B$ be discrete groups. Let $C_i$ be a family of normal subgroups of $B$. Consider a family of homomorphisms
$ f_i : A \to B/C_i$.
\begin{definition}
	\label{def:consistent}
	The family $f_i$ is \emph{consistent}  if for every  $a \in A$ there exists an element $b_a \in B$  with
	$f_i(a) = b_a C_i$
	for all $i$.
\end{definition}

Chabauty convergence can be used to get a consistent family of maps.

\begin{lemma}
	\label{lem:on kernel of a sequence of homomorphisms and Chabauty}
	Assume that $A$ is finitely presented. Let $f : A \to B/C$ be a homomorphism where $C$ is some  normal subgroup of $B$.  If the normal subgroups $C_i$ Chabauty converge to $C$ then   there exists a family of homomorphisms $f_i : A \to B/C_i$
	consistent with the homomorphism $f$ for all $i \in \NN$ sufficiently large.
\end{lemma}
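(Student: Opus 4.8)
The plan is to use the finite presentation of $A$ to reduce the statement to a finite amount of data and then exploit Chabauty convergence $C_i \to C$ to lift that data. Write $A = \langle x_1,\ldots,x_k \mid w_1,\ldots,w_m\rangle$ with $x_j$ a finite generating set and $w_1,\ldots,w_m$ a finite set of defining relators. Let $\pi : B \to B/C$ denote the quotient map and $\pi_i : B \to B/C_i$ the quotient maps. First I would lift the generators: for each $j$ choose $b_j \in B$ with $\pi(b_j) = f(x_j)$. The tentative definition is $f_i(x_j) = \pi_i(b_j)$, extended to words in the $x_j$ in the obvious way; this gives a well-defined \emph{map} on the free group $F$ on $x_1,\ldots,x_k$, and consistency (with witness $b_a$ obtained by evaluating a chosen word representing $a$ in the $b_j$) is automatic by construction. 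The only thing to check is that this map descends to a homomorphism $A \to B/C_i$, i.e. that each relator $w_\ell$ is killed.

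\textbf{Key step.} Evaluate each relator $w_\ell$ in the lifted generators $b_j$ to get an element $\beta_\ell \in B$. Since $f$ is a homomorphism on $A$ we have $\pi(\beta_\ell) = f(w_\ell) = \mathrm{id}$, i.e. $\beta_\ell \in C$. Because there are only finitely many relators, the finite set $\{\beta_1,\ldots,\beta_m\}$ is contained in $C$. Now Chabauty convergence $C_i \to C$ in $\Pow{B}$ means precisely that for each fixed $b \in C$ we have $b \in C_i$ for all $i$ sufficiently large (and for each $b \notin C$, $b \notin C_i$ eventually); applying this to the finitely many elements $\beta_\ell$ simultaneously, there is $i_0$ such that $\beta_\ell \in C_i$ for all $\ell$ and all $i \ge i_0$. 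For such $i$, the map on $F$ sends every relator $w_\ell$ into $C_i$, so $\pi_i(w_\ell) = \mathrm{id}$ in $B/C_i$, hence the map factors through $A = F/\langle\langle w_1,\ldots,w_m\rangle\rangle$ to give a genuine homomorphism $f_i : A \to B/C_i$. Consistency of the family $(f_i)_{i \ge i_0}$ with $f$ holds because for any $a \in A$, picking a word $v$ in the $x_j$ representing $a$ and setting $b_a$ to be the evaluation of $v$ in the $b_j$, we get $f_i(a) = \pi_i(b_a) = b_a C_i$ for all $i \ge i_0$ and also $f(a) = \pi(b_a) = b_a C$.

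\textbf{Main obstacle.} There is no serious obstacle; the one point requiring care is that Chabauty convergence only controls each element of $B$ individually, so the argument genuinely needs $A$ finitely presented (only finitely many relators $\beta_\ell$ to accommodate at once) and finitely generated (only finitely many generators to lift) — an infinitely presented $A$ would force us to absorb infinitely many elements into the $C_i$, which a single index $i_0$ cannot guarantee. This is exactly why the hypothesis appears, and it is worth remarking that the resulting $f_i$ need not be unique, since the lifts $b_j$ of the generators were chosen arbitrarily. Everything else is bookkeeping on free groups and quotient maps.
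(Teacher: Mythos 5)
Your proof is correct and follows essentially the same route as the paper's: lift generators to $B$, evaluate the finitely many relators to get elements of $C$, use Chabauty convergence to push these into $C_i$ for large $i$, and then descend through the free group (equivalently, via a fixed section $s: A \to F$) to get consistent homomorphisms. The paper phrases the descent via a section $s$ and the induced map $\overline{f}: F_\Sigma \to B$, but this is the same bookkeeping you do with "pick a word $v$ representing $a$."
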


\begin{proof}
	
	Let   $\Sigma$ be a finite generating set and $R$ be a finite set of defining relations for the group $A$.  Let $F_\Sigma$ be the free group on the set $\Sigma$ with quotient map $p : F_\Sigma \to A$ so that $N = \ker p$ is normally generated by  $R$ regarded as a subset of $F_\Sigma$.

Consider the quotient maps
$$q : B \to B/C \quad \text{and} \quad q_i : B \to B/C_i \quad \forall i \in \NN.$$ Choose an  element $b_\sigma \in B$ for each generator $\sigma \in \Sigma$ such that 
	$$b_\sigma C = q(b_\sigma) = f \circ p(\sigma) = f(\sigma N).$$ 
	Let $\overline{f}: F_\Sigma \to B$ be the homomorphism defined on each generator $\sigma \in \Sigma$ of the free group $F_\Sigma$ by $\overline{f}(\sigma) = b_\sigma$.  Note that $f \circ p = q \circ \overline{f} $. Therefore $\overline{f}(R) \subset C$. The sequence $C_i$ Chabauty converges to $C$ by assumption. Let $i_0 \in \NN$ be sufficiently large index  so that $\overline{f}(R) \subset C_i$ and therefore     $\overline{f}(N) \le C_i$ for all $i > i_0$. 
	
 Let $s : A \to F_\Sigma$ be an arbitrary section to the quotient map $p$, namely  $p \circ s (a) = a$  holds true for all elements $a \in A$. For each $ i\in \NN$ consider the map $f_i : A \to B/C$ defined  by
$$f_i = q_i \circ \overline{f} \circ s : A \to B/C_i.$$

We claim that $f_i$ is a group homomorphism for all indices $i > i_0$.   Indeed, for each pair of elements $a_1,a_2 \in A$ the section    $s$ satisfies 
$ s(a)s(b)N = s(ab)N$. Therefore
\begin{equation}
\label{eq:longish ugly}
 (\overline{f} \circ s)(a) (\overline{f} \circ s)(b)C_i = (\overline{f} \circ s)(ab) C_i.\end{equation}
The claim follows by observing that Equation (\ref{eq:longish ugly})  implies  $f_i(a) f_i(b) =   f_i(ab)$.

It remains to verify that the family of homomorphisms $f_i$ is consistent with $f$. Precomposing the equation $f \circ p = q \circ \overline{f} $ with the section $s$ gives
$$f = f\circ p \circ s = q \circ \overline{f} \circ s : A \to B/C.$$
We conclude that   all elements $a \in A$ satisfy
$$f(a) = (\overline{f} \circ s)(a)C \quad \text{and} \quad f_i(a) = (\overline{f} \circ s)(a)C_i$$
which is exactly the requirement in  Definition \ref{def:consistent}.
\end{proof}

Let $G$ be a finitely generated metabelian group with normal abelian subgroup $N$ and abelian quotient $Q \cong G/N$.  Let $H$ be a subgroup of $G$ with Goursat triplet 
$ [H]  = [Q_H, N_H, \alpha_H]$.
\begin{cor}
	\label{cor:on construction of subgroups using Goursat triplets}

	Let $N_i$ be a sequence of subgroups of $N$ satisfying $H \le \mathrm{N}_G(N_i)$ and Chabauty converging to $N_H$. Then there exists a   family of   homomorphisms
	$$\alpha_i : Q_H \to \mathrm{N}_G(N_i)/N_i$$ 
	defined for all $i \in \NN$ sufficiently large and consistent with the isomorphism $\alpha_H$.
\end{cor}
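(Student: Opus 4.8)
The plan is to obtain $\alpha_i$ as a direct application of Lemma~\ref{lem:on kernel of a sequence of homomorphisms and Chabauty}, whose only hypothesis is finite presentability of the source group. The first remark is therefore that $Q_H$, being a subgroup of the finitely generated (hence Noetherian) abelian group $Q$, is itself finitely generated abelian and in particular finitely presented. The single genuine choice to make is the ambient group in which to run the lemma; I would take $B = HN$, together with $C = N_H$ and $C_i = N_i$. Since $N$ is abelian, every subgroup of $N$ is normal in $N$, so combined with the hypothesis $H \le \mathrm{N}_G(N_i)$ this gives $N_i \nrm HN = B$ for all $i$; likewise $N_H = H \cap N$ is normalized by $H$ (which normalizes $H$ and the normal subgroup $N \nrm G$) and by $N$, so $N_H \nrm B$. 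The Goursat isomorphism $\alpha_H : Q_H \to \mathrm{N}_G(N_H)/N_H$ has image $H/N_H$, a subgroup of $HN/N_H = B/C$, and we regard $\alpha_H$ as a homomorphism $f : Q_H \to B/C$ through this identification. Finally, because $N_i$ and $N_H$ all lie inside $N \le B$, the assumed Chabauty convergence $N_i \to N_H$ is precisely the convergence of normal subgroups of $B$ required by the lemma.

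Lemma~\ref{lem:on kernel of a sequence of homomorphisms and Chabauty} then produces, for all $i$ sufficiently large, homomorphisms $f_i : Q_H \to B/C_i = HN/N_i$ consistent with $f = \alpha_H$: for each $q \in Q_H$ there is an element $b_q \in HN$ with $\alpha_H(q) = b_q N_H$ and $f_i(q) = b_q N_i$. As $HN \le \mathrm{N}_G(N_i)$, there is a canonical inclusion $HN/N_i \hookrightarrow \mathrm{N}_G(N_i)/N_i$, and composing $f_i$ with it defines the desired $\alpha_i : Q_H \to \mathrm{N}_G(N_i)/N_i$. The very same elements $b_q \in HN \subset G$ witness that the family $(\alpha_i)$ is consistent with $\alpha_H$ in the sense of Definition~\ref{def:consistent}; one also notes in passing that $b_q \in \widehat{q}N$, since $\alpha_H(q) \subset \widehat{q}N$, so each $\alpha_i$ automatically satisfies the support condition required of a Goursat triplet, although that is not part of the statement.

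I do not anticipate a substantial obstacle here: all the content lies in correctly arranging the hypotheses of Lemma~\ref{lem:on kernel of a sequence of homomorphisms and Chabauty}, and the metabelian assumption enters only twice — to guarantee that $Q_H$ is finitely presented, and to guarantee that $N$ normalizes $N_H$ and all the $N_i$ so that $B = HN$ is a legitimate common ambient group for $C$ and the $C_i$.
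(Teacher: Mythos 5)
Your proof is correct and follows essentially the same route as the paper: the paper also applies Lemma~\ref{lem:on kernel of a sequence of homomorphisms and Chabauty} with the identical choice $A = Q_H$, $B = HN$, $C = N_H$, $C_i = N_i$, then passes from $HN/N_i$ to $\mathrm{N}_G(N_i)/N_i$ at the end. You supply a bit more detail than the paper does (in particular the verification that $N_i$ and $N_H$ are normal in $HN$, and that Chabauty convergence of subgroups of $N$ is unchanged when viewed inside $B = HN$), but the argument is the same.
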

The fact that the homomorphisms $\alpha_i$ are consistent with the homomorphism $\alpha_H$ automatically implies that $\alpha_i(q) N_i \subset \widehat{q}N$ for all elements $q \in Q_H$ and all $ i\in \NN$.
\begin{proof}[Proof of Corollary \ref{cor:on construction of subgroups using Goursat triplets}]
	Recall that the map $\alpha_H$ is an isomorphism   from $Q_H$ to $H /N_H$. Since $Q$ is a finitely generated abelian group, its subgroup $Q_H$ is finitely generated as well. The image of $\alpha_H$ can be regarded as being contained in the larger group $HN/N_H$.   We may therefore apply Lemma \ref{lem:on kernel of a sequence of homomorphisms and Chabauty}    with respect to the groups
	$$ A = Q_H, \quad B = HN, \quad C = N_H, \quad \text{and} \quad C_i = N_i $$
	to obtain family of homomorphisms $\alpha_i : Q_H \to HN/N_i$ consistent with the homomorphism $\alpha_H$ for all $i\in \NN$ sufficiently large. As $N_i \nrm N$ and $H  \le \mathrm{N}_G(N_i)$ it follows that the image of each $\alpha_i$ lies in $\mathrm{N}_G(N_i)/N_i$. 
\end{proof}

%
%
%
%
%

\section{Controlled approximations}
\label{sec:controlled approximations}

Let $G$ be a finitely generated metabelian group with a normal abelian subgroup $N$ and abelian quotient group $Q \cong G/N$. 

Let   $H \le G$ be any subgroup. We introduce an auxiliary notion of    controlled approximations for $H$. We also define an accompanying notion of a sequence of finite-to-one transversals being adapted to a given controlled approximation.

 Under certain favorable algebraic circumstances, that always apply if $H$ lies in the support of an invariant random subgroup of $G$,  a controlled approximation with its adapted sequence of transversals gives rise to a Weiss approximation, see Theorem \ref{thm:a group with a controlled approximation is co-sofic}.

\subsection*{Controlled approximations}

Let $\left[H\right] = \left[Q_H, N_H, \alpha_H\right]$ be the Goursat triplet of the subgroup $H$ of the finitely generated metabelian group $G$. 

 We briefly discuss the underlying idea of a controlled approximation prior to stating the formal definition. Roughly speaking, this is a sequence of finite index subgroups $K_i$ of $G$ with Goursat triplets $\left[K_i\right] = \left[Q_i, N_i, \alpha_i\right]$.

 We require that the sequences $Q_i$ and $N_i$ Chabauty converge to the subgroups $Q_H$ and $N_H$, respectively, and that the homomorphisms $\alpha_i$ are consistent with $\alpha_H$ in the sense of Definition \ref{def:consistent}. 

While we may assume without loss of generality that the $Q_i$'s approximate $Q_H$ \enquote{from above}  in the sense that $Q_H \le Q_i$ for all $i \in \NN$, this is not the case for the approximation of $N_H$ by the $N_i$'s.  The main point is that   the Chabauty convergence of the $N_i$'s to $N_H$ is    \enquote{controlled} by a sequence of subgroups $M_i \le N$. The restriction of the approximation $N_i$ to each subgroup $M_i$ is \enquote{from above}, namely $N_H \cap M_i \le N_i \cap M_i$. 

Finally, the \enquote{speed} of the Chabauty convergence of the $N_i$'s towards the subgroup $N_H$ is  \enquote{controlled} by a sequence of subsets $T_i \subset N$.



\begin{definition}
\label{def:controlled approximation}
	A \emph{controlled approximation} for $H$ is a sequence  $(K_i, M_i, T_i)$ where 
\begin{itemize}
\item $K_i \le G $ is a finite index subgroup     with Goursat triplet  
	$ \left[K_i\right] = \left[Q_i, N_i, \alpha_i \right]$,
\item $M_i \le N$ is a subgroup with $H \le \mathrm{N}_G(M_i)$, and
\item   $T_i \subset M_i$ is a subset 
\end{itemize}	
 such that
\begin{enumerate}
\item 
\label{it:Q_i converge}
the subgroups $Q_i$ Chabauty converge to $Q_H$,
\item 
\label{it:restrictions consistent}
the  maps $  \alpha_{i|Q_H}$ are consistent with $\alpha_H$, 
\item 
\label{it:sequence exhausts}
the sequence $T_i$ exhausts the subgroup $N$, 
\item 
\label{it:subgroups satisfy}
the subgroups $N_i$ satisfy
		\begin{equation}
		\label{eq:controlled} 
		\forall i \in \NN \quad N_H \cap T_i = N_i \cap T_ i \quad   \text{and} \quad N_H \cap M_i \le N_i \cap M_i.
		\end{equation}
	\end{enumerate}
\end{definition}

If the sequence $K_i$ is a controlled approximation for the subgroup $H$ then  by Items (\ref{it:sequence exhausts}) and (\ref{it:subgroups satisfy}) the sequence $N_i = K_i \cap N$   Chabauty converges to the subgroup $N_H$. Since $Q_H$ is finitely generated we have $Q_H \le Q_i$ for all $i \in \NN$ sufficiently large and the restrictions $\alpha_{i|Q_H}$ are well defined.

We will frequently abuse   notation and simply refer   to the sequence $K_i$ of finite index subgroups as a controlled approximation, without an explicit mention of  the remaining data (i.e.  the subgroups $M_i$ and the subsets $T_i$).


\subsection*{Adapted sequences of transversals}
The following two definitions introduce a companion notion to that of controlled approximations.

	\begin{definition}
		\label{def:adapted sequence}
		The sequence $A_i$ of finite subsets of $G$ is \emph{adapted} to the sequence $B_i$ of finite subsets of the group $G$  if for any finite subset $\Phi \subset N$ and for every element $g \in G$ 
		\begin{equation}
		\label{eq:adapted def}
		\frac{|\{b \in B_i \: : \: \left[g,b\right] + \Phi \subset A_i \}|}{|B_i|} \to 1.
		\end{equation}
	\end{definition}
	
\begin{definition}
\label{def:adapted transversals}
 A   sequence of finite-to-one transversals $F_i \subset G$ of the   subgroups $\mathrm{N}_G(K_i)$  is \emph{adapted} to the controlled approximation $(K_i,M_i,T_i)$  if
\begin{itemize}
	\item  $F_i = \widehat{I}_i P_i$ with $I_i \subset Q$ and $P_i \subset N$ for all $i \in \NN$,
		\item the sequences $I_i$ and $P_i$ exhaust the groups  $Q$ and $N$ respectively,  and
	\item the sequence of subsets $T_i$ is adapted to the sequence $\widehat{I}_i$.
\end{itemize} 
\end{definition}

In our application it will so happen that $T_i = P_i$. This coincidence is not mandatory for the proof of Theorem \ref{thm:a group with a controlled approximation is co-sofic} given below.


\subsection*{From controlled approximations to Weiss approximations} A controlled approximation is in some sense an algebraic notion,  while a Weiss approximation is statistical in nature. For metabelian groups it is nevertheless possible to go from one notion to the other.

\begin{lemma}
\label{lem:application of controlled approximation}
If $(K_i, M_i,T_i)$ is a controlled approximation of the subgroup $H$ then
$$T_i + (N_H \cap M_i)\subset   N \setminus (N_H \triangle N_i)$$
where $N_i = K_i \cap N$ for all $i\in\NN$.
\end{lemma}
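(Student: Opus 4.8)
The claim is a set-theoretic inclusion in $N$, so I would unwind the definitions and verify it elementwise. Fix $i \in \NN$ and let $x \in T_i + (N_H \cap M_i)$, say $x = t + n$ with $t \in T_i$ and $n \in N_H \cap M_i$. We must show $x \notin N_H \triangle N_i$, i.e. that $x$ lies in $N_H$ if and only if it lies in $N_i$. The key input is Item (\ref{it:subgroups satisfy}) of Definition \ref{def:controlled approximation}, which provides the two relations $N_H \cap T_i = N_i \cap T_i$ and $N_H \cap M_i \le N_i \cap M_i$; the first controls the ``speed'' of convergence on $T_i$ and the second the ``from above'' behaviour on $M_i$. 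I also note, as recorded just after Definition \ref{def:controlled approximation}, that $N_i = K_i \cap N$ so that $N_i$ is genuinely the $N$-part of the Goursat triplet of $K_i$, and that $T_i \subset M_i$ by hypothesis.

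The argument splits into two implications. First suppose $x \in N_H$. Since $n \in N_H$ we get $t = x - n \in N_H$, and since $t \in T_i$ we have $t \in N_H \cap T_i = N_i \cap T_i \subset N_i$; combined with $n \in N_H \cap M_i \subseteq N_i \cap M_i \subseteq N_i$ this yields $x = t + n \in N_i$. Conversely suppose $x \in N_i$. Again $n \in N_H \cap M_i \subseteq N_i$, so $t = x - n \in N_i$; since $t \in T_i$ we get $t \in N_i \cap T_i = N_H \cap T_i \subseteq N_H$, and together with $n \in N_H$ this gives $x = t + n \in N_H$. Hence $x \in N_H \iff x \in N_i$, so $x \notin N_H \triangle N_i$, which is exactly $x \in N \setminus (N_H \triangle N_i)$. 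This proves the inclusion.

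\textbf{Main obstacle.} There is essentially no obstacle here: the whole content is bookkeeping with the two defining equations $N_H \cap T_i = N_i \cap T_i$ and $N_H \cap M_i \le N_i \cap M_i$, plus the fact that $N_H$ and $N_i$ are subgroups (so closed under subtraction) and that $T_i \subseteq M_i$. The only point requiring a moment's care is making sure that in each implication one subtracts an element known to lie in \emph{both} $N_H$ and $N_i$ — which is precisely why the hypothesis $n \in N_H \cap M_i$ (and not merely $n \in N_H$) is needed: it is the containment $N_H \cap M_i \subseteq N_i$ that makes $n$ usable on the $N_i$ side. I would present the two implications in a single short paragraph as above.
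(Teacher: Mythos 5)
Your proof is correct and uses the same two inputs as the paper, namely $N_H \cap T_i = N_i \cap T_i$ and $N_H \cap M_i \le N_i \cap M_i$ from Definition~\ref{def:controlled approximation}; the only cosmetic difference is that the paper splits into cases according to whether $t$ lies in $N_H \cap N_i$ or outside $N_H \cup N_i$, while you prove the biconditional $x \in N_H \Leftrightarrow x \in N_i$ directly. These are the same argument reorganized.
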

\begin{proof}
Consider an element $x \in N$ equal to
  $ x = t + n$ where $  t \in T_i $ and  $ n \in N_H \cap M_i$.
  From Equation (\ref{eq:controlled})  of Definition \ref{def:controlled approximation}  we have that $  t \notin   N_H \triangle N_i $ and $n \in N_H \cap N_i$ for all $i\in \NN$.   There are two cases to consider. 
  \begin{itemize}
  \item If $t \in N_H \cap N_i$ then $ x = t + n \in N_H \cap N_i$.
  \item If $t \notin  N_H \cup N_i $ then $ x \notin N_H \cup N_i$ since $ t = x- n$ and $n \in N_H \cap N_i$.
\end{itemize}
In either  case    $x \notin  N_H \triangle N_i$.
\end{proof}

\begin{theorem}
\label{thm:a group with a controlled approximation is co-sofic}
  Let $(K_i,M_i,T_i)$ be a controlled approximation to the subgroup $H$ and $F_i$ be an adapted sequence of finite-to-one transversals of $\mathrm{N}_G(K_i)$ in   $G$. If $\left[N:\mathrm{N}_N(H)\right] < \infty$, then  the sequence $(K_i,  F_i)$ is a Weiss approximation for $H$.
\end{theorem}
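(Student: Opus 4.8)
The goal is to verify the numerical criterion of Proposition \ref{prop:a numberical condition for a group to be co-sofic}: writing $N_i = K_i \cap N$, we must show that for every fixed $g \in G$ the proportion
$$ p_i(g) = \frac{|\{f \in F_i \: : \: g^f \in K_i \triangle H\}|}{|F_i|} $$
tends to $0$ as $i \to \infty$. Once this is established, Proposition \ref{prop:a numberical condition for a group to be co-sofic} immediately yields that $(K_i, F_i)$ is a Weiss approximation for $H$, which is what we want. So the entire argument is a careful counting/estimation of which conjugates $g^f$ lie in the symmetric difference $K_i \triangle H$.

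\textbf{Main steps.} First I would write $g = \widehat{q} n$ with $q \in Q$, $n \in N$, and $f = \widehat{r} m$ with $r \in I_i$, $m \in P_i$, using the adapted transversal structure $F_i = \widehat{I}_i P_i$. Applying Proposition \ref{prop:condition on conjugate to be in H} twice — once for $H$ and once for $K_i$ — membership of $g^f$ in each of $H$ and $K_i$ is governed by (a) whether $q$ lies in $Q_H$ resp. $Q_i$, and (b) whether $[g,f] + (n - a_q)$ lies in $N_H$ resp. $N_i$, where $a_q$ is the offset coming from $\alpha_H(q)$ resp. $\alpha_i(q)$. For $i$ large, $Q_H \le Q_i$, so the "$Q$-part" can only cause trouble when $q \in Q_i \setminus Q_H$; but $\pi_*\mu$-type considerations are not available here since we are proving an unconditional statement about $H$ — instead, since the $Q_i$ Chabauty converge to $Q_H$ and $q$ is a fixed element, for $i$ large either $q \in Q_H$ or $q \notin Q_i$, and in the latter case $g^f \notin K_i$ and $g^f \notin H$ simultaneously, so $f$ contributes nothing. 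Hence we may assume $q \in Q_H$, and by the consistency of $\alpha_i$ with $\alpha_H$ (Item (\ref{it:restrictions consistent}) of Definition \ref{def:controlled approximation}) we may take the \emph{same} offset $a_q$ for both $H$ and $K_i$ for all large $i$. Thus, for large $i$, $g^f \in K_i \triangle H$ forces
$$ [g,f] + (n - a_q) \in N_H \triangle N_i. $$
Now decompose $[g,f]$: using $g = \widehat q n$, $f = \widehat r m$ and the commutator identities (\ref{eq:standard commutator identities}) together with $h := \alpha_H(q)$-type bookkeeping, one writes $[g,f]$ modulo $N_H$ as $[h, m] + (\text{something depending only on } q, r, n)$, i.e. $[g,f] + (n - a_q) = [h,m] + c_{q,r}$ for a fixed element $c_{q,r} \in N$ (here $h \in H$ is an element representing $\alpha_H(q)$). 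The key structural inputs are then: (i) by Corollary \ref{cor:technical lemma on Psi_q}, since $[N:\mathrm{N}_N(H)] < \infty$ and $H \le \mathrm{N}_G(M_i)$ with $M_i$ exhausting $N$, there is a finite set $\Phi_h \subset N$ with $[h, M_i] \subset \Phi_h + (H \cap M_i)$ for $i$ large; (ii) by Lemma \ref{lem:application of controlled approximation}, $T_i + (N_H \cap M_i) \subset N \setminus (N_H \triangle N_i)$.

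\textbf{Putting it together.} Restrict attention to those $f = \widehat r m \in F_i$ for which $m \in P_i$ lies in the "good" set where $[h, m] + \Phi_h \subset T_i$ — by the adaptedness of $T_i$ to $\widehat I_i$ (third bullet of Definition \ref{def:adapted transversals}), applied with the fixed element determining $h$ and the finite set $\Phi_h$, the proportion of bad $m$ (relative to all of $F_i$) tends to $0$. Wait — adaptedness as stated in Definition \ref{def:adapted sequence} is about $[g, b] + \Phi \subset A_i$ for $b$ ranging over $B_i$; I would apply it with $B_i = \widehat I_i$, but the sum is over $P_i$, so I first need to reorganize: actually $F_i = \widehat I_i P_i$ and the relevant commutator $[h,m]$ depends on $m \in P_i$, so I should instead invoke that $P_i$ exhausts $N$ and combine with a Fubini-type count over the product structure $F_i = \widehat I_i P_i$; the clean way is to note $[h,m] \in [h, N] = \Phi_h + N_H$ (finitely many cosets), pick for each coset a representative, and use that $P_i$ exhausts $N$ so that for $i$ large $[h,m] \in \Phi_h + (N_H \cap M_i)$ once $m \in M_i \cap P_i$, which is all but a vanishing fraction of $P_i$. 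Then for such $f$, $[g,f] + (n-a_q) = [h,m] + c_{q,r} \in \Phi_h + (N_H \cap M_i) + c_{q,r}$; by absorbing $c_{q,r} + \Phi_h$ into the finite exhausting set $T_i$ (here is where adaptedness of $T_i$ to $\widehat I_i$ enters: for all but a vanishing fraction of $r \in I_i$, $c_{q,r} + \Phi_h \subset T_i$) we get $[g,f] + (n-a_q) \in T_i + (N_H \cap M_i)$, which by Lemma \ref{lem:application of controlled approximation} lies \emph{outside} $N_H \triangle N_i$. Hence $g^f \notin K_i \triangle H$ for all but a vanishing fraction of $f \in F_i$, i.e. $p_i(g) \to 0$. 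I expect the main obstacle to be the bookkeeping in step (ii)–(iii): correctly isolating the part of $[g,f] + (n - a_q)$ that lies in a fixed finite union of $N_H$-cosets versus the part controlled by the adapted sequences, and making sure the "vanishing fraction" estimates over the product set $\widehat I_i P_i$ genuinely combine (this is where Definition \ref{def:adapted transversals}'s three clauses must all be used in concert). Everything else is a routine application of the commutator identities and the cited propositions.
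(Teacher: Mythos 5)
Your proposal is correct and follows essentially the same path as the paper's own proof: reduce to the numerical criterion of Proposition \ref{prop:a numberical condition for a group to be co-sofic}, split on whether $q\in Q_H$, use consistency to get a common offset $a_q$ for $\alpha_H$ and $\alpha_i$, decompose $\left[g,f\right]=\left[h,m\right]+\left[g,\widehat r\right]$, control $\left[h,m\right]$ via Corollary \ref{cor:technical lemma on Psi_q}, control $\left[g,\widehat r\right]+\delta$ via the adaptedness of $T_i$ to $\widehat I_i$, and close with Lemma \ref{lem:application of controlled approximation}. The one wrinkle you flag mid-proof --- that adaptedness lives on $\widehat I_i$ while $\left[h,m\right]$ depends on $m\in P_i$, so Corollary \ref{cor:technical lemma on Psi_q} needs $m\in M_i$ --- is the right thing to worry about, but your ``all but a vanishing fraction of $P_i$'' claim is not actually forced by the stated hypotheses; the paper silently resolves it because in its construction $P_i=T_i\subset M_i$ (Definition \ref{def:controlled approximation}), so every $m\in P_i$ already lies in $M_i$ and no second limiting argument over $P_i$ is needed.
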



\begin{proof} 


We wish to prove that  the sequence $(K_i, F_i)$ is indeed a Weiss approximation for the subgroup $H$.  With this goal in mind    fix an element $g = \widehat{q}n \in G$ for some pair of  elements $q \in Q$ and $n \in N$.
Relying  on the explicit condition given in Proposition \ref{prop:a numberical condition for a group to be co-sofic} we need to verify that
\begin{equation}
\label{eq:desired limit}
 p_i(g) = \frac{|\{f \in F_i \: : \: g^f \in K_i \triangle H \}| }{|F_i|} \xrightarrow{i \to \infty} 0.
\end{equation}	
There are two separate cases to deal with,  depending on whether the element $q$ belongs to the subgroup $Q_H$ or not. 
	
	\vspace{5pt}
	
The first case to deal with is where   $q \notin Q_H$. Then by Item (\ref{it:Q_i converge}) of Definition \ref{def:controlled approximation} we deduce that $q \notin Q_i$ as well for all $i \in \NN$ sufficiently large. As the group $Q$ is abelian, this means that $g^f \notin  K_i \triangle H $ for all elements $f \in G$ whatsoever and certainly for all $f \in F_i$.   In particular    $p_i(g) = 0$ for all $i\in \NN$ sufficiently large.

	\vspace{5pt}
		
	The second and more interesting case is where $q \in Q_H  $. By Item (\ref{it:restrictions consistent}) of Definition \ref{def:controlled approximation}, the restrictions   $\alpha_{i|Q_H}$ are  consistent with the homomorphism $\alpha_H$. Therefore there is an element $a_q \in N$ such that	 for all $i \in \NN$ we have
	$$\alpha_H(q) = \widehat{q}( a_q+ N_H) \quad \text{and} \quad \alpha_i(q) = \widehat{q} (a_q +N_i). $$
	
	To estimate the quantity $p_i(g)$ consider a given element $f \in F_i = \widehat{I}_i P_i$ where
		$$ f = \widehat{r}m, \quad r \in I_i, \quad \text{and} \quad m   \in P_i.$$ 
Recall that Proposition \ref{prop:condition on conjugate to be in H}  
gives an explicit criterion for the conjugate $g^f$ to be contained in either subgroup $H$ or $K_i$ of $G$. As we assumed that $q \in Q_H$ and $i$ is sufficiently large so that $q \in Q_i$, this criterion implies that
\begin{equation}
\label{eq:iff}
g^f \in H \triangle K_i  \quad \text{if and only if} \quad \left[g,f\right] + \delta \in  N_H \triangle N_i.
\end{equation}
where $\delta = n - a_q \in N$ depends only on $g$. 

Since $q \in Q_H$ there some element $h \in H$ such that $hN = gN$. Equation (\ref{eq:standard commutator identities}) implies that $\left[g,m\right] = \left[h,m\right]$. Therefore Corollary \ref{cor:technical lemma on Psi_q}  supplies a finite subset $\Phi_g \subset N$ so that
\begin{equation}
\label{eq:x(g,f) 2nd}
\left[g,f\right] = \left[g,m\right] + \left[g,\widehat{r}\right] = \left[h,m\right] + \left[g,\widehat{r}\right]
\in \left[g,\widehat{r}\right] +   \Phi_g +   (N_H \cap M_i).
\end{equation}
On the other hand,  Lemma \ref{lem:application of controlled approximation} tells us that
\begin{equation}
\label{eq:agree}
T_i + (N_H \cap M_i)\subset   N \setminus (N_H \triangle N_i).
\end{equation}
By putting together     equations $(\ref{eq:iff})$, $(\ref{eq:x(g,f) 2nd})$ and $(\ref{eq:agree})$  we infer that
\begin{equation}
\label{eq:final}
\left[g,\widehat{r}\right] +  (\Phi_g + \delta) \subset T_i \quad \text{implies} \quad g^f \in G \setminus (H \triangle K_i).
\end{equation}

To conclude the proof of the   case where $q \in Q_H$, we are required to show that $p_i(g) \xrightarrow{i\to\infty} 0$. Relying on Equation (\ref{eq:final}) this would follow from 
\begin{equation}
\label{eq:special adapted}
 \frac{|\{r \in I_i \: : \: \left[g,\widehat{r}\right] +  (\Phi_g + \delta) \subset T_i \} | }{|I_i|} \xrightarrow{i\to\infty} 1.
 \end{equation}
  By assumption, the sequence $F_i$ of finite-to-one transversals is adapted to the controlled approximation $(K_i,M_i,T_i)$ in the sense of Definition \ref{def:adapted transversals}. This means that the sequence  $T_i$ is adapted to the sequence of lifts $\widehat{I}_i$ in the sense of Definition \ref{def:adapted sequence}.
  Finally, observe that Equation (\ref{eq:special adapted}) follows as a special case of Equation (\ref{eq:adapted def}).
 \end{proof}


\section{Permutational metabelian groups}
\label{sec:permutational metabelian groups}


Let $Q$ be a finitely generated abelian group. A \emph{$Q$-set} is a set admitting an action of the group $Q$. Let $X$ be a fixed $Q$-set. 

\subsection*{Permutational modules}

Let $B$ be a finitely generated abelian group. 
The \emph{permutational   module} corresponding to the $Q$-set $X$ and with base group $B$ is the abelian group
\begin{equation} B^X = \bigoplus_{x \in X} B^x
\end{equation}
where for every $x \in X$ the direct summand $B^x$ is an isomorphic copy of $B$. The group $Q$ is acting on $N$ by group automorphisms by permuting coordinates. A permutational module is a module in the standard sense over the group ring $\ZZ\left[Q\right]$. The $\ZZ\left[Q\right]$-module $B^X$ is finitely generated if and only if the $Q$-set $X$ admits finitely many $Q$-orbits.

For every subset $Y \subset X$ we will  denote
\begin{equation}
 B^Y = \bigoplus_{x \in Y} B^x. 
 \end{equation}
The group $B^Y$ is a direct summand of the permutational module $B^X$ with its additive group structure.  
Similarly, given a subset $C$ of $B$ and a subset $Y$ of $X$ we will denote
\begin{equation}
\label{eq:C^Y def}
 C^Y = \{ b \in B^Y \: : \: b^x \in C \quad \forall x \in Y\}.
 \end{equation}

\begin{remark}
\label{remark:direct product vs sum}
We caution the reader that, opposite to what is customary,  we   use the notation $B^X$ for a direct sum  rather than a direct product.
\end{remark}


\begin{prop}
\label{prop:kernel acts trivially}
Let $K\nrm Q$ be the kernel of the $Q$-action on $X$. Then the subgroup $K$ acts trivially on the permutational module $B^X$.
\end{prop}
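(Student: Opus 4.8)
The claim is essentially immediate from the definitions, so the proof plan is short.

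\begin{proof}[Proof plan]
The plan is to unwind the definition of the $Q$-action on the permutational module $B^X = \bigoplus_{x \in X} B^x$. By construction this action permutes the coordinates according to the given $Q$-action on the set $X$: for $q \in Q$, an element $b = (b^x)_{x \in X}$ is sent to the element whose $x$-coordinate is $b^{q^{-1} x}$ (equivalently, $(q \cdot b)^{qx} = b^x$). First I would observe that if $q \in K$, i.e. $q$ lies in the kernel of the $Q$-action on $X$, then $qx = x$ for every $x \in X$. Substituting this into the formula for the action gives $(q \cdot b)^x = b^{q^{-1}x} = b^x$ for every $x \in X$, so $q \cdot b = b$. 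Since $b \in B^X$ was arbitrary, $q$ acts trivially on $B^X$, which is exactly the assertion.

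There is essentially no obstacle here; the only thing to be careful about is making the convention for the permutation action explicit (which coordinate goes where) so that the substitution $qx = x$ is applied correctly, and noting that $K$ is indeed normal in $Q$ as the kernel of a group action (though for this statement only the setwise triviality on $X$ is used). One could phrase it even more structurally: the $Q$-action on $B^X$ factors through the homomorphism $Q \to \mathrm{Sym}(X)$ defining the $Q$-set structure, whose kernel is by definition $K$, and hence $K$ is contained in the kernel of the $\ZZ[Q]$-module structure map, giving the result. I would present the coordinate computation as the main line of argument and mention the factoring-through reformulation as a remark.
\end{proof}
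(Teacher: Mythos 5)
Your proof is correct and matches the paper's approach; the paper simply states ``Immediate from the definitions,'' and your coordinate computation (plus the factoring-through remark) is exactly the unwinding it has in mind.
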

\begin{proof}
Immediate from the definitions.
\end{proof}

\begin{prop}
\label{prop:BY for Y invariant is normalized}
Let $R$ be a subgroup of $Q$. If the subset $Y \subset X$ is $R$-invariant then the subgroup $B^Y $ of the permutational module $B^X$ is $R$-invariant.
\end{prop}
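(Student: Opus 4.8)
The statement to prove is essentially trivial, so my proof proposal will be brief and to the point.

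\begin{proof}
The plan is to unwind the definition of the $Q$-action on the permutational module $B^X$, which permutes the direct summands $B^x$ according to the $Q$-action on the index set $X$. First I recall that for $q \in Q$ and $b = (b^x)_{x \in X} \in B^X$, the action is given coordinatewise by $(q \cdot b)^{qx} = b^x$, i.e. the summand $B^x$ is carried isomorphically onto the summand $B^{qx}$. Consequently, for any subset $Y \subset X$ we have $q \cdot B^Y = B^{qY}$, simply because $B^Y = \bigoplus_{x \in Y} B^x$ is carried onto $\bigoplus_{x \in Y} B^{qx} = \bigoplus_{y \in qY} B^y = B^{qY}$.

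Now suppose $R \le Q$ and $Y \subset X$ is $R$-invariant, meaning $rY = Y$ for every $r \in R$ (one should note that $R$-invariance of a subset under a group action already entails invariance in both directions, since $r^{-1} \in R$ as well). Then for every $r \in R$ we get $r \cdot B^Y = B^{rY} = B^Y$, which is exactly the assertion that $B^Y$ is an $R$-invariant subgroup of the permutational module $B^X$. There is no real obstacle here; the only thing to be careful about is matching the convention for which direct summand gets moved where, but the conclusion is insensitive to that choice.
\end{proof}
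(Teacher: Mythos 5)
Your proof is correct and takes essentially the same approach as the paper's, namely a direct unwinding of the permutational action; the paper phrases it via the complementary characterization ($b \in B^Y$ iff $b_x = 0$ for all $x \in X \setminus Y$) while you compute $q \cdot B^Y = B^{qY}$ directly, but these are the same trivial observation.
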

The above means that $B^Y$ is a $\ZZ[R]$-submodule of the module $B^X$.

\begin{proof}[Proof of Proposition \ref{prop:BY for Y invariant is normalized}]
Note that an element $b \in B^X$ lies in $B^Y$ if and only if $b_i = 0$ for all $i\in X \setminus Y$. If the subset $Y \subset X$ is $R$-invariant then this condition is   $R$-invariant as well.
\end{proof}
 
 A \emph{$Q$-factor} of the $Q$-set $X$ is a $Q$-set $\overline{X}$ admitting a $Q$-equivariant surjective map $f : X \to \overline{X}$. 
 A \emph{transversal} to a $Q$-factor map $f$ is a subset $Y\subset X$ such that the restriction of $f$ to $Y$ is a bijection onto $\overline{X}$.

\begin{lemma}
\label{lem:if base is abelian there is a quotient of wreath products}
 Let $f : X \to \overline{X}$ be a $Q$-factor map with transversal $Y \subset X$.  
\begin{enumerate}
	\item \label{it:surjection}
	There is a surjective $\ZZ\left[Q\right]$-module homomorphism
	$ \pi_f : B^X \to B^{\overline{X}}$.
	
	\item  \label{it:restriction}
	The restriction of $\pi_f$ to the subgroup $B^Y$ is an isomorphism onto $B^{\overline{X}}$.
	\item  \label{it:intersection}
	If $H$ is a subgroup of $B^Y$ then $H + (\ker \pi_f \cap B^Y) = H$. 
	\item  \label{it:transveral}
	 Let $L$ be a subgroup of $B^{\overline{X}}$. Then $T \subset B^Y$ is a finite-to-one transversal for $\pi_f^{-1}(L)$ in $B^X$ if and only if $\pi_f(T)$ is a finite-to-one transversal for $L$ in $B^{\overline{X}}$.
\end{enumerate}

\end{lemma}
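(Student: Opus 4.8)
The plan is to construct $\pi_f$ coordinate-wise and then derive the remaining three items from the single observation that $\pi_f$ restricts to a bijection on $B^Y$. For item (\ref{it:surjection}) I would define, for $b = (b_x)_{x\in X} \in \bigoplus_{x \in X} B^x$ and $\bar{x} \in \overline{X}$,
\[
(\pi_f(b))_{\bar{x}} = \sum_{x \in f^{-1}(\bar{x})} b_x,
\]
a finite sum since $b$ has finite support (here one must remember that $B^X$ is a direct \emph{sum} and not a direct product, cf.\ Remark \ref{remark:direct product vs sum}). This is manifestly a homomorphism of abelian groups, and it is $\ZZ[Q]$-linear precisely because $f$ is $Q$-equivariant, so that $q\, f^{-1}(\bar{x}) = f^{-1}(q\bar{x})$; surjectivity is immediate from surjectivity of $f$.

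Item (\ref{it:restriction}) follows because $Y$ being a transversal means $f|_Y \colon Y \to \overline{X}$ is a bijection, so $\pi_f$ carries the summand $B^y$ of $B^Y$ isomorphically onto $B^{f(y)}$, and as $y$ ranges over $Y$ these summands exhaust $B^{\overline{X}}$ without overlap; hence $\pi_f|_{B^Y}$ is an isomorphism onto $B^{\overline{X}}$. Item (\ref{it:intersection}) is then an immediate corollary: $\pi_f|_{B^Y}$ is in particular injective, so $\ker\pi_f \cap B^Y = \{0\}$ and therefore $H + (\ker\pi_f \cap B^Y) = H$ for every subgroup $H \le B^Y$.

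For item (\ref{it:transveral}) I would use the standard fact that, since $\ker\pi_f \le \pi_f^{-1}(L)$, the map $\pi_f$ induces a bijection between cosets of $\pi_f^{-1}(L)$ in $B^X$ and cosets of $L$ in $B^{\overline{X}}$, with $\pi_f^{-1}(\pi_f(b)+L) = b + \pi_f^{-1}(L)$ for every $b \in B^X$. Given $T \subset B^Y$, injectivity of $\pi_f|_{B^Y}$ yields
\[
|T \cap (b + \pi_f^{-1}(L))| = |\{ t \in T : \pi_f(t) \in \pi_f(b) + L\}| = |\pi_f(T) \cap (\pi_f(b) + L)|,
\]
and as $b$ ranges over a set of coset representatives of $\pi_f^{-1}(L)$ the elements $\pi_f(b)$ range over a set of coset representatives of $L$, by surjectivity of $\pi_f$. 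Hence, for a fixed $k \in \NN$, the set $T$ contains exactly $k$ elements of each coset of $\pi_f^{-1}(L)$ if and only if $\pi_f(T)$ contains exactly $k$ elements of each coset of $L$, which is exactly the claimed equivalence of finite-to-one transversals (Definition \ref{def:transversal}).

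I do not anticipate any real obstacle: the lemma is essentially bookkeeping. The only points requiring a little care are the finiteness of the fibre sums defining $\pi_f$ — which is why the direct-sum convention matters — and keeping the coset correspondence in item (\ref{it:transveral}) straight.
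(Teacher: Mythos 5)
Your proof is correct and takes essentially the same approach as the paper: you define $\pi_f$ by summing over the fibres of $f$ on each coordinate (using abelianness of $B$ and the direct-sum convention for well-definedness), and you deduce items (\ref{it:restriction})--(\ref{it:transveral}) from the key fact that $\pi_f$ restricts to an isomorphism $B^Y \to B^{\overline{X}}$, which is exactly the paper's observation that $B^Y$ is a complement to $\ker\pi_f$ in $B^X$. The paper dispatches (\ref{it:intersection}) and (\ref{it:transveral}) in one line via the isomorphism theorem; your more explicit coset-counting for (\ref{it:transveral}) fills in the same content.
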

This elementary lemma crucially relies on the base group $B$ being abelian. Note the great similarity to Gr{\"u}nbergs  \cite[Lemma 3.2]{grunberg1957residual}.
\begin{proof}
The surjective homomorphism $\pi_f$ is given by the natural isomorphisms
$$ \pi_{|B_x}: B_x \xrightarrow{\cong} B_{f(x)}$$
on each coordinate.
 Since the base group $B$ is abelian $\pi_f$ is well-defined. Moreover the restriction of $\pi_f$ to $B^Y$ is clearly an isomorphism onto $B^{\overline{X}}$. This shows   $(\ref{it:surjection})$ and $(\ref{it:restriction})$. 
 Items $(\ref{it:intersection})$ and $(\ref{it:transveral})$ immediately follow as the subgroup $B^Y$ is a complement to the subgroup $\ker \pi_f$ in $B^X$, and relying on the isomorphism theorem for groups. 
\end{proof}

For example, if $R$ is a  (normal) subgroup of $Q$ then the space of $R$-orbits $ R \backslash X$ is a $Q$-factor of $X$.

\subsection*{Permutational wreath products}

 A \emph{permutational metabelian group} is a metabelian group $G$ admitting a short exact sequence $$ 1 \to N \to G \to Q \to 1$$
with both groups $N$ and $Q$  being abelian  and such that the action of $Q$ on $N$ is a permutational module with respect to some  $Q$-set $X$. A permutational metabelian group $G$ is finitely generated if and only if  $Q$ is finitely generated and the $Q$-set $X$ admits finitely many orbits.

A \emph{permutational  wreath product} with respect to the $Q$-set $X$ and with base group $B$ is a split permutational metabelian group, namely 
$$G = B \wr_X Q \cong Q \ltimes N $$
where $N$ is the permutational module $B^X$ with its additive group structure.
We will regard the permutational wreath product $G$ as an extension of the group $Q$ by the group $N$. 

A special case of the above class of groups  is the \emph{wreath product} $G = B \wr Q$. This is simply the permutational wreath product  $B \wr_Q Q$ where the group $Q$ is acting on itself by translations. This group is of course split as well.

The  \emph{free metabelian group} is an example of a non-split permutational metabelian group. In fact, if $ x_1, \ldots, x_d$ is an ordered generating set for the free metabelian group $\Phi$ then its derived subgroup  $  [\Phi, \Phi]$ is a free $\ZZ[x_1^{\pm1},\ldots,x_d^{\pm1}]$-module with basis given by the set of commutators $\{[x_i,x_j] \, : \, i < j \}$. This follows from the \emph{commutator collecting process} introduced by Phillip Hall in \cite{hall1934contribution}.

\subsection*{Commutator identities} 
Let $G$ be a split metabelian group. The standard commutator identity as in   Equation (\ref{eq:standard commutator identities}) implies that 
\begin{equation}
\label{eq:metabelian identities}
\left[qn,rm\right] = \left[q,rm\right]^n + \left[n,rm\right]   = \left[q,r\right]^m + \left[q, m\right]   + \left[n,r \right] ^m + \left[n,m\right] = \left[q, m\right]   - \left[r,n \right]
\end{equation}
for any choice of elements $q,r \in Q$ and $n,m \in N$.

\section{Finitely generated abelian groups}

Let $Q$ be a finitely generated abelian group. The structure theorem for finitely generated abelian groups allows us to decompose $Q$ as a direct sum
$Q = A \oplus T$ where $A$    is a free abelian group of rank $d$ for some $d \in \NN\cup \{0\}$ and $T$ is   torsion. Fix an arbitrary basis $\Sigma \subset Q$ for the free abelian group $A$.

\begin{definition}
Given an element   $q \in Q$  with
	$$ q = t \prod_{\sigma \in \Sigma}  \sigma^{n_\sigma}, \quad n_\sigma \in \ZZ, \quad t \in T$$
define the \emph{semi-norm} $ \|q\|_\Sigma$ of $q$ with respect to the basis $\Sigma$ as
$$ \|q\|_\Sigma  = \max_{\sigma \in \Sigma} | n_\sigma |.$$
\end{definition}

Note that $\|\cdot\|_\Sigma $ is a well-defined norm   on the torsion-free part  $A \le Q$ while $\|t\| = 0$ for all $t \in T$.   The definition of   $\|\cdot\|_\Sigma $ clearly depends on the choice of the particular basis $\Sigma$. 

Roughly speaking, balls with respect to   $\|\cdot\|_\Sigma$  look like \enquote{cubes} in the group $Q$. The ball $\{q \in Q \: : \: \|q\|_\Sigma \le k\}$ has  \enquote{sides} of length $2k+1$, which is always an odd number. For our purposes, we will require  \enquote{cubes} which look almost like $\|\cdot\|_\Sigma$-balls but allow for arbitrary  \enquote{sides}.

\begin{definition}
	\label{def:infty balls}
	The subset $\mathrm{B}_Q(k, \Sigma)$ of the group $Q$ is given by
	$$ \mathrm{B}_Q(k, \Sigma)= \{ t \prod_{\sigma \in \Sigma}  \sigma^{n_\sigma}  \: : \: -\ceil{k/2} < n_\sigma \le \floor{k/2} \quad \forall \sigma \in \Sigma, \; t \in T \}$$
	for every $k\in \NN$.
\end{definition}

The subset $\mathrm{B}_Q(k,\Sigma)$ coincides with a $\|\cdot\|_\Sigma$-ball with for $k$ odd, while for $k$ even it is  \enquote{off by one}. The torsion part $T$ is contained in $\mathrm{B}_Q(k,\Sigma)$ for every $ k \in \NN$.


 For every $k\in \NN$, let $Q[k]$ denote the subgroup generated by the $k$-th powers of all elements of $Q$. The subgroup $Q[k]$ has finite index and is characteristic in $Q$.
The following is essentially an immediate consequence of  the above definitions.
\begin{prop}
	\label{prop:infty balls are a transversal}
	Let $k \in \NN$. The subset $\mathrm{B}_Q( k, \Sigma)$ is a finite-to-one transversal to the subgroup $Q[k] $ for every $k \in \NN$. In fact $\mathrm{B}_A(k,\Sigma) $ is a (one-to-one) transversal to $A[k]$ where $A$ is the torsion-free part of the group $Q$.
\end{prop}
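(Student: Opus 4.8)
The plan is to unwind the definitions and reduce everything to the torsion-free part $A$, where the statement becomes a transparent fact about $\ZZ^d$. First I would recall that $Q = A \oplus T$ with $A$ free abelian on the basis $\Sigma$ and $T$ torsion, and that every element $q \in Q$ has a unique expression $q = t\prod_{\sigma\in\Sigma}\sigma^{n_\sigma}$. Since $Q[k]$ is generated by $k$-th powers, and in an abelian group the set of $k$-th powers is already a subgroup, we have $Q[k] = \{q^k : q\in Q\} = A[k] \oplus T[k]$ where $A[k] = kA$ and $T[k]$ is the (finite) subgroup of $k$-th powers in $T$. The key observation is that the quotient $Q/Q[k] \cong (A/kA) \oplus (T/T[k])$, and the subset $\mathrm{B}_Q(k,\Sigma)$ is designed precisely so that its image in $A/kA$ is a complete set of coset representatives while it sweeps out \emph{all} of $T$ (i.e. every torsion coset, with multiplicity $|T[k]|$).

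The main steps, in order: (1) show $\mathrm{B}_A(k,\Sigma) = \{\prod_{\sigma}\sigma^{n_\sigma} : -\lceil k/2\rceil < n_\sigma \le \lfloor k/2\rfloor\}$ is a one-to-one transversal to $A[k] = kA$ in $A$. Coordinatewise this is the statement that the integers in the interval $(-\lceil k/2\rceil, \lfloor k/2\rfloor]$ form a complete residue system modulo $k$: the interval has exactly $\lfloor k/2\rfloor + \lceil k/2\rceil = k$ integers, and no two are congruent mod $k$ since they lie in an interval of length $k$. Hence the $d$-fold product gives a transversal to $kA$ in $A \cong \ZZ^d$. (2) Observe that $\mathrm{B}_Q(k,\Sigma) = \mathrm{B}_A(k,\Sigma) \cdot T$, i.e. the product of the transversal $\mathrm{B}_A(k,\Sigma)$ with the entire torsion subgroup. (3) Combine: given any $q = t a \in Q$ with $t\in T$, $a \in A$, write $a = a_0 + k b$ uniquely with $a_0 \in \mathrm{B}_A(k,\Sigma)$ and $b \in A$; then $q = (a_0 t)(kb)$ with $a_0 t \in \mathrm{B}_Q(k,\Sigma)$ and $kb \in Q[k]$, so $\mathrm{B}_Q(k,\Sigma)$ meets every coset of $Q[k]$. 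Counting: each coset of $Q[k]$ is met in exactly $|T[k]|$ points, because the representative $a_0$ of the $A$-part is forced, but the torsion element can be any of the $|T[k]|$ elements $t$ with $ta_0 Q[k]$ the given coset. So $\mathrm{B}_Q(k,\Sigma)$ is a finite-to-one transversal to $Q[k]$ with multiplicity $|T[k]|$, which is finite since $T$ is finite.

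The \enquote{main obstacle} is really just bookkeeping with the ceiling/floor asymmetry in Definition \ref{def:infty balls}, together with keeping the torsion accounting straight; there is no conceptual difficulty once one notices that $Q[k]$ is a subgroup (because $Q$ is abelian) and that $\mathrm{B}_Q(k,\Sigma)$ decomposes as a product of an honest transversal in $A$ with all of $T$. I would state the proof as: reduce to $A$ via the direct sum decomposition, handle $\ZZ^d$ by the one-dimensional complete-residue-system remark coordinate by coordinate, then reintroduce $T$ and observe that each $Q[k]$-coset acquires exactly $|T[k]|$ representatives, giving the finite-to-one transversal claim and, in the torsion-free case $T = \{0\}$, the sharper one-to-one transversal claim.
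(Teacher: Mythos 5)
Your argument is correct, and it spells out precisely the verification that the paper leaves implicit by declaring the proposition \enquote{an immediate consequence of the above definitions}: reduce to the torsion-free part $A \cong \ZZ^d$ coordinate by coordinate, note that the integers in $(-\ceil{k/2}, \floor{k/2}]$ form a complete residue system modulo $k$, and then reintroduce the finite torsion part $T$ to see that each coset of $Q[k]$ is hit exactly $|T[k]|$ times. No gaps; this is the intended route.
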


Universal sequences of transversals were introduced in Proposition \ref{prop:universal sequence} above.

\begin{cor}
\label{cor:universal sequence in fg abelian}
The sequence $B_Q(k!,\Sigma)$ is a universal  sequence of transversals in the group $Q$.
\end{cor}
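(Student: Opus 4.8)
The plan is to reduce this directly to Proposition \ref{prop:infty balls are a transversal} together with the elementary fact that in a finitely generated abelian group every finite index subgroup contains $Q[k!]$ once $k$ is large enough. Recall, from the discussion following Proposition \ref{prop:universal sequence}, that to call $\{\mathrm{B}_Q(k!,\Sigma)\}_{k\in\NN}$ a universal sequence of transversals I must show that for each finite index subgroup $H\le Q$ the finite set $\mathrm{B}_Q(k!,\Sigma)$ is a finite-to-one transversal for $H$ for all sufficiently large $k$. So I would fix such an $H$ and argue for all $k$ beyond a threshold depending on $[Q:H]$.

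First I would fix a finite index subgroup $H\le Q$ and set $n=[Q:H]$. Since $Q$ is abelian the quotient $Q/H$ is a group of order $n$, so $q^n\in H$ for every $q\in Q$ by Lagrange's theorem, i.e. $Q[n]\le H$. Next, for any $k\ge n$ the integer $n$ divides $k!$; writing $k!=n\ell$ gives $q^{k!}=(q^\ell)^n\in Q[n]$ for every $q\in Q$, hence $Q[k!]\le Q[n]\le H$. This is the only ``computation'' involved and it is immediate; it is also precisely the reason the factorial appears in the statement, since we need divisibility by every $n\le k$ simultaneously.

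The remaining step is to pass from a transversal of $Q[k!]$ to one of the larger subgroup $H$. By Proposition \ref{prop:infty balls are a transversal} the set $\mathrm{B}_Q(k!,\Sigma)$ is a finite-to-one transversal of $Q[k!]$, meeting each coset of $Q[k!]$ in some fixed number $c$ of points. Since $Q[k!]$ has finite index in $Q$ we have $[H:Q[k!]]<\infty$, and each coset of $H$ is a disjoint union of exactly $[H:Q[k!]]$ cosets of $Q[k!]$; therefore $\mathrm{B}_Q(k!,\Sigma)$ meets each coset of $H$ in exactly $c\cdot[H:Q[k!]]$ points and is a finite-to-one transversal of $H$. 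As this holds for every $k\ge n=[Q:H]$, the sequence $\mathrm{B}_Q(k!,\Sigma)$ is universal. I do not anticipate any genuine obstacle; the only point requiring a little care is keeping track of the indexing (factorials rather than the integers themselves) and recording the trivial lemma that a finite-to-one transversal of a subgroup is automatically a finite-to-one transversal of any intermediate subgroup of finite index.
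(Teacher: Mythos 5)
Your proof is correct and follows essentially the same route as the paper's (one-line) proof: the key observation in both is that $Q[d]\le R$ whenever $[Q:R]=d$, so that $Q[k!]\le R$ for $k\ge d$, and then Proposition \ref{prop:infty balls are a transversal} together with the remark that a finite-to-one transversal of a subgroup is a finite-to-one transversal of any finite-index overgroup finishes the argument. You spell out the divisibility step and the transversal-inheritance step explicitly, which the paper leaves implicit, but there is no difference in substance.
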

\begin{proof}
This follows immediately from the observation that any finite index subgroup $R \le Q$   with $d = \left[Q:R\right]$ satisfies $Q[d] \le R$.  
\end{proof}



The subsets $\mathrm{B}_Q(k,\Sigma)$ are all centered at the identity element of the group $Q$. This behavior is captured in the following definition that will play an important role in the following section.

\begin{definition}	
	A sequence $I_i$ of finite subsets  of the group $Q$ is \emph{centered} if for any element $r \in Q$
	\begin{equation}
	\frac{|\{q \in I_i \: : \: r \in q   I_i \}|}{|I_i|} \xrightarrow{i\to\infty} 1.
	\end{equation}

\end{definition}

%
%
%
	We leave the easy verification of the following fact to the reader.
\begin{prop}
	\label{prop:infty balls are a centered Folner sequence}
	Let $k_i \in \NN$ be any sequence with $\lim k_i = \infty$. Then $\mathrm{B}_Q(k_i, \Sigma)$ is a centered exhausting Folner sequence in the group $Q$.
\end{prop}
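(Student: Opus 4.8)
The plan is to reduce all three assertions to an elementary lattice-point count inside the torsion-free part $A \le Q$. Write $Q = A \oplus T$ with $A$ free abelian on the basis $\Sigma$ and $T$ torsion, and record each $q \in Q$ by its torsion part together with its exponent vector $(n_\sigma)_{\sigma\in\Sigma} \in \ZZ^\Sigma$. By Definition \ref{def:infty balls}, $\mathrm{B}_Q(k,\Sigma)$ is precisely the set of all $q$ whose torsion part is arbitrary and whose exponent vector lies in the integer box $C_k := \prod_{\sigma\in\Sigma}(-\ceil{k/2},\floor{k/2}\,]$; in particular $T \subset \mathrm{B}_Q(k,\Sigma)$ for every $k$, and $|\mathrm{B}_Q(k,\Sigma)| = |T|\cdot k^{|\Sigma|}$. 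Since multiplication by any fixed element of $Q$ permutes the torsion coordinate bijectively and translates the exponent coordinate by the corresponding integer vector, the factor $|T|$ cancels in every ratio below, and it suffices to work with the boxes $C_k$ in $\ZZ^\Sigma$.

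\emph{Exhausting.} Every $q \in Q$ has finite semi-norm $m := \|q\|_\Sigma$, so once $\ceil{k_i/2} > m$ we have $q \in \mathrm{B}_Q(k_i,\Sigma)$; as $k_i \to \infty$ this holds for all large $i$, and therefore $\mathrm{B}_Q(k_i,\Sigma)$ Chabauty converges to the point $Q \in \Pow{Q}$. \emph{Centered.} Fix $r \in Q$ with exponent vector $(m_\sigma)$ and set $m := \|r\|_\Sigma$. For $q \in \mathrm{B}_Q(k_i,\Sigma)$ the condition $r \in q\,\mathrm{B}_Q(k_i,\Sigma)$ is equivalent to $q^{-1}r \in \mathrm{B}_Q(k_i,\Sigma)$; the torsion part of $q^{-1}r$ lies in $T$ automatically, so the condition fails exactly when some exponent $n_\sigma$ of $q$ satisfies $m_\sigma - n_\sigma \notin (-\ceil{k_i/2},\floor{k_i/2}\,]$. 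For each coordinate the number of such bad values of $n_\sigma$ among the $k_i$ admissible ones is at most $|m_\sigma|+1 \le m+1$, so a union bound over the $|\Sigma|$ coordinates gives
\[
\frac{|\{q \in \mathrm{B}_Q(k_i,\Sigma) \: : \: r \notin q\,\mathrm{B}_Q(k_i,\Sigma)\}|}{|\mathrm{B}_Q(k_i,\Sigma)|} \le \frac{(m+1)\,|\Sigma|}{k_i} \xrightarrow{i\to\infty} 0,
\]
which is the defining property of a centered sequence.

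\emph{Folner.} Fix $r \in Q$ with $m := \|r\|_\Sigma$ and take $i$ large enough that $k_i \ge m$. Multiplication by $r$ permutes $T$ bijectively and shifts the box $C_{k_i}$ by an integer vector $v$ with $\|v\|_\infty \le m$, and two integer boxes of side $k_i$ offset by $v$ differ in at most $2\bigl(k_i^{|\Sigma|} - (k_i-m)^{|\Sigma|}\bigr) \le 2m|\Sigma|\,k_i^{|\Sigma|-1}$ points. Hence
\[
\frac{|r\,\mathrm{B}_Q(k_i,\Sigma)\,\triangle\,\mathrm{B}_Q(k_i,\Sigma)|}{|\mathrm{B}_Q(k_i,\Sigma)|} \le \frac{2m\,|\Sigma|}{k_i} \xrightarrow{i\to\infty} 0,
\]
so $\mathrm{B}_Q(k_i,\Sigma)$ is a Folner sequence. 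None of this is deep; the only points requiring a little care are keeping the irrelevant torsion factor $T$ separate from the box in $A$ and handling the asymmetric endpoint convention $-\ceil{k/2} < n_\sigma \le \floor{k/2}$, both of which are dispatched by working throughout with the half-open integer intervals of length exactly $k$ that make up $C_k$ and by noting that the per-coordinate counts above are uniform in $i$ as soon as $k_i > m$.
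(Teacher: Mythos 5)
Your argument is correct and complete; the paper itself offers no proof here (it explicitly leaves the verification to the reader), so there is no paper proof to compare against. Your approach — splitting $Q = A\oplus T$, reducing to integer boxes $C_k$ in $\ZZ^\Sigma$, and using the identity $|C_k|=k^{|\Sigma|}$ (a consequence of $\ceil{k/2}+\floor{k/2}=k$) together with per-coordinate counting and a union bound — is the natural and expected one. Two small points that you handled correctly and that are worth noting: the bound of $|m_\sigma|+1$ on the number of bad coordinate values in the \emph{centered} estimate is the right one because the parity of $k_i$ can shift the asymmetric interval $(-\ceil{k_i/2},\floor{k_i/2}]$ by one relative to its translate, which you absorb into the $+1$; and in the \emph{Folner} estimate you correctly require $k_i \ge m$ before invoking $k_i^{|\Sigma|}-(k_i-m)^{|\Sigma|}\le |\Sigma|\,m\,k_i^{|\Sigma|-1}$. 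The case $|\Sigma|=0$ (i.e.\ $Q$ finite torsion, $\mathrm{B}_Q(k,\Sigma)=Q$) is also covered automatically by your formulas, since all three ratios are identically $0$. Nothing further is needed.
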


\section{Approximations in permutational wreath products}
\label{sec:approximation in permutational}
Let $G$ be a finitely generated permutational wreath 
$$ G = B \wr_X Q$$
where $B$ and $Q$ are  finitely generated abelian    groups and $X$ is a $Q$-set. Since the group  $G$ is finitely generated,  $X$ admits finitely many $Q$-orbits, see  \S\ref{sec:permutational metabelian groups}.  We emphasize that $G$ is split. In fact
$$ G = Q \ltimes N \quad \text{where} \quad N = B^X.$$

Our current goal is the following construction.

\begin{theorem}
\label{thm:every subgroup admits a controlled approximation}
 Let $R$ be a fixed subgroup of $Q$.
Then $G$ admits a   Folner sequence 
$F_{ i}$ 
with the following property:

If $H$ is any subgroup of $G$ with
 $R = Q_H $  and 
$\left[N:\mathrm{N}_N(H)\right] < \infty$
  then $H$ has a controlled approximation $(K_i,M_i,T_i)$  such that $F_i$ is an adapted sequence of finite-to-one transversals   of $\mathrm{N}_G(K_i)$ for all $i \in \NN$ sufficiently large.
\end{theorem}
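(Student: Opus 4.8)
The plan is to prove the theorem by an explicit two‑stage construction. First I would fix, depending only on $R$, a Folner sequence $F_i=\widehat{I}_i P_i$ of the required product form; then, given any $H$ with $Q_H=R$ and $\left[N:\mathrm{N}_N(H)\right]<\infty$, I would build a controlled approximation $(K_i,M_i,T_i)$ for which this fixed $F_i$ turns out to be adapted in the sense of Definition \ref{def:adapted transversals}, and verify the axioms of Definition \ref{def:controlled approximation}.

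For the Folner sequence, fix a finite set $O\subset X$ of representatives of the $Q$‑orbits and take $I_i:=\mathrm{B}_Q(d_i!,\Sigma)$ for a rapidly growing sequence $d_i\to\infty$; by Corollary \ref{cor:universal sequence in fg abelian} and Proposition \ref{prop:infty balls are a centered Folner sequence} this is a universal, centered, exhausting Folner sequence of transversals in $Q$. Set $Y_i:=I_iO$, a finite subset exhausting $X$, let $C_i$ be an increasing sequence of finite \enquote{cubes} exhausting $B$, and put $P_i=T_i:=C_i^{Y_i}\subset B^{Y_i}\subset N$, an exhausting Folner sequence in $N$. Tuning the growth of $d_i$ and $C_i$, the specialized Lemma \ref{lem:F_i is Folner} makes $F_i:=\widehat{I}_iP_i$ a Folner sequence in $G$. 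I would then check that $T_i=P_i$ is adapted to $\widehat{I}_i$ in the sense of Definition \ref{def:adapted sequence}: since $G$ is split, \eqref{eq:metabelian identities} shows that for $g=\widehat{q}n$ the element $\left[g,\widehat{r}\right]+\Phi$ is supported on a fixed finite set translated by $r$, with coordinates bounded independently of $r$; because $Y_i=I_iO$ and $I_i$ is centered and exhausting, for all but a vanishing proportion of $r\in I_i$ this translate lands inside $Y_i$ with values inside $C_i$, i.e.\ $\left[g,\widehat{r}\right]+\Phi\subset T_i$. This $F_i$, which depends only on $R$, is the Folner sequence the theorem promises.

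Now fix $H$ as in the statement. Set $M_i:=B^{\langle R\rangle Y_i}$, the permutational submodule over the $R$‑saturation of $Y_i$; by Proposition \ref{prop:BY for Y invariant is normalized} it is $R$‑invariant, hence normalized by $H$ since $Q_H=R$, it contains $T_i$, and it exhausts $N$. Choose finite‑index $Q_i\le Q$ Chabauty converging to $R$ (residual finiteness of the finitely generated abelian group $Q/R$, shrunk further so that $Q_i/R$ is torsion‑free), with $d_i$ large enough that $Q[d_i!]\le Q_i$, so that $I_i$ is automatically a finite‑to‑one transversal of $Q_i$ and of $\pi(\mathrm{N}_G(K_i))$ by Proposition \ref{prop:infty balls are a transversal}; as $R$ is finitely generated and $Q_i\to R$ we get $R\le Q_i$ for $i$ large, with a free complement $V_i$, i.e.\ $Q_i=R\oplus V_i$. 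The heart of the argument is the normal part $N_i$. Working inside $M_i$ — which is finitely generated as a $\ZZ[R]$‑module precisely because $\langle R\rangle Y_i$ meets only finitely many $R$‑orbits — I apply Proposition \ref{prop:finite index normal subgroups in a metabelian group} (Hall's Theorem \ref{thm:Hall}) in the finitely generated metabelian group $R\ltimes M_i$, to the subgroup $R\ltimes(N_H\cap M_i)$ with probe set the finite set $T_i$ (together with the finitely many commutators supplied by Corollary \ref{cor:technical lemma on Psi_q}, which are finite modulo $N_H$ thanks to $\left[N:\mathrm{N}_N(H)\right]<\infty$); this yields a finite‑index, $R$‑invariant $L_i\le M_i$ with $N_H\cap M_i\le L_i$ and $L_i\cap T_i=N_H\cap T_i$. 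I then extend $L_i$ to a finite‑index, $Q_i$‑invariant $N_i\le N$ with $N_i\cap T_i=N_H\cap T_i$ and $N_i\cap M_i\supseteq N_H\cap M_i$, by saturating over $Q_i$ inside $B^{Q_iY_i}$ and filling in the coordinates over $X\setminus Q_iY_i$, using the product structure of permutational modules and the transversal bookkeeping of Lemma \ref{lem:if base is abelian there is a quotient of wreath products} to keep $P_i$ a finite‑to‑one transversal of $N_i$.

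Finally, with $N_i$ in hand, $N_i$ Chabauty converges to $N_H$ (from the two agreement conditions plus exhaustion of $N$ by the $T_i$), so Corollary \ref{cor:on construction of subgroups using Goursat triplets} gives homomorphisms $\alpha_i^0:Q_H\to\mathrm{N}_G(N_i)/N_i$ consistent with $\alpha_H$; I extend $\alpha_i^0$ over $Q_i=R\oplus V_i$ by sending $V_i$ via $q\mapsto\widehat{q}N_i$ (well defined since $N_i$ is $Q_i$‑invariant and $\widehat{\cdot}$ splits the abelian $V_i$), and let $K_i$ be the finite‑index subgroup with Goursat triplet $\left[Q_i,N_i,\alpha_i\right]$ via Proposition \ref{prop:description of subgroups of direct products}. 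One then checks directly that $(K_i,M_i,T_i)$ fulfils the four requirements of Definition \ref{def:controlled approximation} and that $F_i=\widehat{I}_iP_i$ fulfils Definition \ref{def:adapted transversals} — the last point, that $F_i$ is a finite‑to‑one transversal of $\mathrm{N}_G(K_i)$, following from Proposition \ref{prop:product of finite-to-one is finite-to-one transversal} and the transversal properties of $I_i$ and $P_i$ established along the way. The step I expect to be the main obstacle is the construction of $N_i$: one must reconcile, inside a single finite‑index $Q_i$‑invariant subgroup of $N$, exact agreement with $N_H$ on the finite probe set $T_i$, one‑sided agreement on the possibly infinite subgroup $M_i$, $Q_i$‑invariance, and compatibility with the product transversal $P_i$, and all of this uniformly in $H$. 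The two levers that make it work are the reduction to $M_i=B^{\langle R\rangle Y_i}$, which is finitely generated over $\ZZ[R]$ because it involves only finitely many $R$‑orbits, and the hypothesis $\left[N:\mathrm{N}_N(H)\right]<\infty$, which keeps the relevant commutator data finite.
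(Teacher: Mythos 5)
You have the right skeleton: the centered boxes $I_i=\mathrm{B}_Q(\Sigma,\cdot)$ in $Q$, the boxes $E_i^{Z_i}$ in $N=B^X$, the adaptedness check via \eqref{eq:metabelian identities}, Hall's theorem (via Proposition~\ref{prop:finite index normal subgroups in a metabelian group}) to produce the normal part $N_i$, and Corollary~\ref{cor:on construction of subgroups using Goursat triplets} to produce $\alpha_i$ by consistency. This is all in step with the paper. The gap is in the step you yourself flag as the main obstacle, namely the construction of $N_i$.

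You propose to apply Hall inside the subgroup $R\ltimes M_i$ with $M_i=B^{\langle R\rangle Y_i}$ (your $Y_i$ finite), obtain an $R$-invariant finite-index $L_i\le M_i$, and then \enquote{extend to a finite-index $Q_i$-invariant $N_i\le N$ by saturating over $Q_i$ inside $B^{Q_iY_i}$ and filling in the coordinates over $X\setminus Q_iY_i$}. This cannot work as stated. Because $N=B^X$ is a direct sum (not a product), any $V_i$-equivariant or $V_i$-periodic extension of a proper finite-index $L_i\le M_i$ has \emph{infinite} index in $N$: even in the lamplighter case $B=\ZZ_2$, $Q=X=\ZZ$, the $V_i$-saturation of a proper subgroup of a block $B^{Y_i}$ is a proper restricted direct product over infinitely many blocks, with infinite quotient. \enquote{Filling in} the coordinates over $X\setminus Q_iY_i$ by taking everything there does not help, because there is nothing left over once $Q_iY_i=X$ (the generic situation when $X$ is a transitive $Q$-set). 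So no finite-index $Q_i$-invariant $N_i$ emerges from the saturation you describe.

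The missing idea is to pass to the \emph{quotient} module rather than work inside the submodule $R\ltimes M_i$. The paper sets $\overline X_i=V_i\backslash X$, $\pi_i:B^X\to B^{\overline X_i}$ the induced $\ZZ[Q]$-module surjection of Lemma~\ref{lem:if base is abelian there is a quotient of wreath products}, forms the finitely generated metabelian group $\Gamma_i=Q_i\ltimes B^{\overline X_i}$ (finitely generated because $Q_i$ has finite index in $Q$, so $\overline X_i$ has finitely many $Q_i$-orbits), applies Proposition~\ref{prop:finite index normal subgroups in a metabelian group} there to get a finite-index $\overline N_i$, and \emph{pulls back} $N_i=\pi_i^{-1}(\overline N_i)$. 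The pullback automatically contains the large subgroup $\ker\pi_i$, which is exactly what makes $[N:N_i]$ finite; it is also automatically $Q_i$-invariant because $\pi_i$ is $\ZZ[Q]$-linear and $\overline N_i$ is $Q_i$-invariant. For the agreement conditions $N_i\cap T_i=N_H\cap T_i$ and $N_H\cap M_i\le N_i$ to transfer through $\pi_i$, the paper further needs $M_i=B^{Y_i}$ with $Y_i$ an \emph{$R$-invariant transversal} to the orbit map $X\to V_i\backslash X$ (so that $\pi_i|_{M_i}$ is an isomorphism); this is the content of Item~(4) of Lemma~\ref{lem:controlled Folner sets and transversals}, obtained from a direct-sum decomposition of $Q$ tuned to both $R$ and the stabilizers $S_l$. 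Your $M_i=B^{\langle R\rangle Y_i}$ is $R$-invariant but need not be a $V_i$-transversal, so even after switching to the quotient construction you would lose the isomorphism $\pi_i|_{M_i}$ and with it the clean transfer of the probe condition. In short: the controlled approximation must be built by Hall-in-the-quotient-and-pullback, not Hall-in-the-submodule-and-saturate, and the $R$-invariant set over which $M_i$ is taken must simultaneously be a transversal to the $V_i$-action.
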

Controlled approximations and  adapted sequences of finite-to-one transversals  were introduced in Definition \ref{def:controlled approximation} and \ref{def:adapted transversals}, respectively.

\vspace{5pt}

\emph{The proof of Theorem \ref{thm:every subgroup admits a controlled approximation} will consist of several consecutive lemmas. Every   lemma will build on the previous ones.  We will allow ourselves to   use any   notations and objects introduced in this section without an explicit mention.  }

\subsection*{Chabauty approximations }
Let $R$ be a fixed subgroup of $Q$. Choose an arbitrary basis $\Sigma$ for the torsion-free part of $Q$.

  Let $r \in \NN$ denote the number of the $Q$-orbits in the $Q$-set $X$ and   $X = \coprod_{l=1}^r X_l$ be the corresponding orbit decomposition. Choose an arbitrary point $x_l \in X_l$ for every $l \in \{1,\ldots,r\}$. In particular $X_l \cong Q/S_l$ where $S_l = \mathrm{Stab}_Q(x_l) \le Q$.

\begin{lemma}
\label{lem:controlled Folner sets and transversals}
There is a monotone sequence $n_i \in \NN $   such that 
\begin{enumerate}
\item \label{it:Chabauty}
the finite index subgroups $Q_i = R +  Q[n_i]$ Chabauty converge to $R$, 
\item \label{it:direct sum}
the subgroups $Q_i$ satisfy $Q_i = R \oplus V_i$ for some subgroups $V_i \le Q$, 
\item \label{it:boxes}
the subsets $I_i = \mathrm{B}_Q(\Sigma, n_i)$ form a centered exhausting Folner sequence of finite-to-one transversals of the subgroups $Q_i$, and
\item \label{it:transversal}
the $Q$-factor maps $f_i: X \to V_i \backslash X$ admit $R$-invariant transversals $Y_i \subset X$ 
\end{enumerate}
for all $i \in \NN$.
\end{lemma}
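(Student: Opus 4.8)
The plan is to produce the sequence $n_i$ by combining the structure theory of finitely generated abelian groups with elementary facts about permutational modules. First I would recall that $Q \cong A \oplus T$ with $A$ free abelian of finite rank on the chosen basis $\Sigma$ and $T$ finite. Writing $R \le Q$, the quotient $Q/R$ is again finitely generated abelian, so one can choose a sequence of positive integers $n_i$ (e.g.\ $n_i = i!$, or any cofinal divisibility sequence) such that the $n_i$-th power subgroup $Q[n_i]$ becomes arbitrarily deep. The subgroup $Q_i = R + Q[n_i]$ then has finite index in $Q$, and I would check that $\bigcap_i Q_i = R$: indeed for any $q \notin R$, the image $\bar q$ in the finitely generated abelian group $Q/R$ is nonzero, hence (by residual finiteness of $Q/R$, or directly from the structure theorem) $\bar q \notin (Q/R)[n_i]$ for $n_i$ large, so $q \notin Q_i$. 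Since the $Q_i$ are a decreasing-up-to-refinement family of finite index subgroups containing $R$ with intersection $R$, they Chabauty converge to $R$, giving item~(\ref{it:Chabauty}). One subtlety: to get a genuinely \emph{monotone} sequence $n_i$ with $Q_{i+1} \subseteq Q_i$ I would take $n_i$ so that $n_i \mid n_{i+1}$, which forces $Q[n_{i+1}] \le Q[n_i]$.

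For item~(\ref{it:direct sum}), the point is that $Q_i$ contains $R$ as a direct summand. Here I would pass to $Q/R$, which decomposes as (free part) $\oplus$ (torsion), and observe that the image of $Q_i$ in $Q/R$ is a finite index subgroup of a finitely generated abelian group, hence splits off; lifting a complement and using that $R$ is a summand of its preimage gives $Q_i = R \oplus V_i$ for a suitable $V_i \le Q$. (If needed one enlarges $n_i$ slightly so the relevant complement exists; since we only need this for all $i$, and divisibility is preserved, this is harmless.) Actually it is cleanest to first split $Q = A \oplus T$, note $Q[n] = nA \oplus T[n]$, and argue the splitting inside the free part where everything is transparent, then carry the torsion along. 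For item~(\ref{it:boxes}), by Corollary~\ref{cor:universal sequence in fg abelian} and Proposition~\ref{prop:infty balls are a centered Folner sequence} the ``cubes'' $\mathrm{B}_Q(n_i,\Sigma)$ form a centered exhausting Folner sequence, and by Proposition~\ref{prop:infty balls are a transversal} $\mathrm{B}_Q(n_i,\Sigma)$ is a finite-to-one transversal to $Q[n_i]$; since $Q_i \supseteq Q[n_i]$, it is also a finite-to-one transversal to $Q_i$ (a transversal to a smaller subgroup restricts to a finite-to-one transversal to a larger one, multiplicities dividing the index ratio). The exhausting and Folner properties follow from $\lim n_i = \infty$.

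The genuinely new content is item~(\ref{it:transversal}): the $Q$-factor maps $f_i : X \to V_i \backslash X$ must admit $R$-invariant transversals $Y_i$. The obstacle is that an $R$-orbit inside an $V_i$-orbit need not be a full set of $V_i$-coset representatives unless $V_i$ and $R$ interact nicely with the point stabilizers $S_l$. This is exactly where the splitting $Q_i = R \oplus V_i$ of item~(\ref{it:direct sum}) is used, together with working orbit-by-orbit: on the orbit $X_l \cong Q/S_l$, the $V_i$-orbits correspond to $V_i\backslash Q/S_l$, and I need to pick one element from each in an $R$-invariant way. Since $Q$ is abelian, $V_i \backslash Q / S_l = Q/(V_i S_l)$, and the $R$-action on $X_l$ descends to $Q/(V_i S_l)$; an $R$-invariant transversal to $f_i$ on $X_l$ is the same as a set-theoretic section of $Q/S_l \to Q/(V_i S_l)$ that is $R$-equivariant, i.e.\ that picks a point in each fiber compatibly with the residual $R$-action. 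Using $Q_i = R\oplus V_i$ and $V_i \le Q_i$, the quotient $Q/(V_iS_l)$ is acted on by $R$ through $Q_i/V_i \cong R$ modulo $(V_iS_l)/V_i$, and one builds the section by choosing representatives first along an $R$-transversal and then extending $V_i$-equivariantly; because $R \cap V_i = 0$ this is consistent. I expect this last step — setting up the double-coset bookkeeping so that the chosen section is simultaneously a transversal for $f_i$ and genuinely $R$-invariant — to be the main obstacle, and it is precisely the place where one cannot avoid using that $V_i$ is a complement to $R$ rather than an arbitrary finite index subgroup. The remaining items are then routine consequences of the structure theorem and the properties of $\|\cdot\|_\Sigma$-cubes already established.
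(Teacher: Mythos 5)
Your outline for items (1)--(3) is broadly fine, though harder than necessary: you decompose $Q = A \oplus T$ generically rather than adapting the splitting to $R$. The paper chooses $Q = U \oplus V$ with $R$ of finite index in $U$, the torsion $T$ contained in $U$, and $V$ free; then $Q[n] = U[n] \oplus V[n]$, and once $m \mid n$ with $U[m] \le R$ one has $Q_n = R + Q[n] = R \oplus V[n]$ on the nose, with no need for the ``lift a complement / enlarge $n_i$ slightly'' maneuvers you gesture at for item~(\ref{it:direct sum}).

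The genuine gap is in item~(\ref{it:transversal}), which you rightly identify as the hard part but do not actually prove. You claim one can pick representatives ``first along an $R$-transversal and then extend $V_i$-equivariantly; because $R \cap V_i = 0$ this is consistent.'' That is not the right consistency condition: for an $R$-invariant subset of $X_l \cong Q/S_l$ to meet each $V_i$-orbit at most once, one needs something like $R \cap (V_i + S_l) \le S_l$, and the stabilizer $S_l$ is essential. The condition $R \cap V_i = 0$ tells you nothing about this interaction. Concretely, take $Q = \ZZ^2$, $R = \ZZ \times 0$, $X = Q / \langle (1,1) \rangle \cong \ZZ$, and $V_i = 0 \times n_i\ZZ$. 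Then $R \cap V_i = 0$, $Q_i = R \oplus V_i$, and all your hypotheses hold; yet $R$ acts transitively on $X$ while $V_i$ acts by translations by $n_i\ZZ$, so the only $R$-invariant subsets of $X$ are $\emptyset$ and $X$, neither of which is a transversal to $V_i \backslash X$ once $n_i > 1$. Your proposed construction therefore breaks down, and the missing ingredient is exactly what the paper supplies: for each orbit one further splits $V = W_l \oplus W'_l$ with $R + S_l$ of finite index in $U \oplus W_l$, picks $m_l$ with $(U \oplus W_l)[m_l] \le R + S_l$, sets $k = \mathrm{lcm}(m, m_1, \ldots, m_r)$ and $n_i = ik$, and takes $Y_{i,l} = (U + W_l + I_i)\,x_l$. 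You should redo item~(\ref{it:transversal}) along those lines, and in doing so check carefully how the orbit stabilizers $S_l$ must sit relative to $R$ for the construction to go through --- the example above shows the statement cannot hold unconditionally for an arbitrary pair $(R, X)$, so one must track where the hypotheses actually used downstream ($R = Q_H$ together with $[N : \mathrm{N}_N(H)] < \infty$) enter.
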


Roughly speaking $\mathrm{B}_Q(\Sigma,n)$ is a ball of diameter $n$ based at the identity in the seminorm $\|\cdot\|_\Sigma$ with respect to the basis  $\Sigma$. See Definition \ref{def:infty balls} for details.

\begin{proof}[Proof of Lemma \ref{lem:controlled Folner sets and transversals}]
By  the structure theorem for finitely generated abelian groups there is a direct sum decomposition
$$ Q = U \oplus V$$
such that $R$ is a finite index   subgroup of $U$, the torsion subgroup $T$ is contained in $U$ and the subgroup $V$ is torsion-free. In particular, there is some $m \in \NN$   such that $  U[m] \le R  $.
The same theorem allows us to find   direct sum decompositions
$$ V = W_l \oplus W'_l $$
such that $R+S_l$ is a finite index subgroup of $U \oplus W_l$ for every $l \in \{1,\ldots,r\}$. So there are integers $m_l \in \NN$   such that $ (U \oplus W_l)[m_l] \le R+S_l$.   Take 
$$k = \mathrm{lcm}(m,m_1,\ldots,m_r).$$

Define  $n_i = ik$ and   $ Q_i = R +  Q[n_i]$ for all $i \in \NN$. Note that 
$$Q_i =  R \oplus V_i \quad \text{where} \quad V_i =  V[n_i]. $$
The subgroups   $Q_i$ Chabauty converge to the subgroup $R$. This concludes the proof of Items $(\ref{it:Chabauty})$ and $(\ref{it:direct sum})$.

Item $(\ref{it:boxes})$  concerning the subsets  $I_i = \mathrm{B}_Q(\Sigma, n_i)$ follows immediately   by combining Propositions \ref{prop:infty balls are a transversal} and \ref{prop:infty balls are a centered Folner sequence}.

It remains   to establish Item $(\ref{it:transversal})$. Note that
$$ R+ S_l +  Q[n_i] = Q_i +S_l   = (R+S_l) \oplus W'_{i,l} $$
where $W'_{i,l} =  W'_l[n_i]$ for every $ i\in\NN$ and $ l\in\{1,\ldots,r\}$. 
Note that $S_l + V_i = S_l \oplus W'_{i,l}$. Moreover $X_l = Q/S_l$ and
$$  V_i \backslash X_l = V_i \backslash Q / S_l \cong Q/(S_l + V_i) \cong Q/(S_l \oplus W'_{i,l})$$
for all $ l \in \{1,\ldots,r\}$. Consider the following subsets
$$ Y_i = \coprod_{l=1}^r Y_{i,l} \quad \text{where} \quad Y_{i,l} =  \left(U + W_l + I_i  \right) x_l. $$
Since  $R \le U$, it is clear that every subset $Y_{i,l}$ as well as the subset $Y_i$ is $R$-invariant. Moreover every subset $Y_{i,l}$ is   a transversal to the restricted $Q$-factor map
$$ X_l \cong Q/S_l \to Q/(S_l \oplus W'_{i,l}) \cong V_i \backslash X_l. $$
This concludes the proof.
\end{proof}


%
%


Denote
$Z_i = \coprod_{j=1}^r I_i x_j$. As $I_i$ exhausts $Q$, it  follows that $X = \bigcup_i Z_i$. The inclusions
 $Z_i \subset Y_i \subset X$ hold for all $i \in \NN$. The subsets $Z_i$ are always finite. The subsets $Y_i$ are infinite if and only if $R$ admits infinite orbits.

\subsection*{Adapted sequences}
The metabelian group $G$ is split. In particular, we may identify each $I_i$   with its lift regarding it as a subset of $G$ for every $i \in \NN$. 

Let $\Delta$ be an arbitrary basis for the torsion-free part of the base group $B$. Denote
$$E_i = \mathrm{B}_B(i!, \Delta) $$
for all $i \in \NN$. Therefore    $E_i$ is a centered exhausting Folner sequence in the finitely generated abelian group $B$, see Proposition 	\ref{prop:infty balls are a centered Folner sequence}. The reason for using $i!$ instead of just $i$  will become clear in the proof of Lemma \ref{lem:F_i are transversals} below, c.f Corollary \ref{cor:universal sequence in fg abelian}.

Recall that the subset  $E^Z \subset B^X$ was defined in the obvious way   for a pair of subsets $E \subset B$ and $Z \subset X$, see Equation (\ref{eq:C^Y def}) and Remark \ref{remark:direct product vs sum}.

\begin{lemma}
\label{lem:adapted subsets T_i}
The sequence $T_i = E_i ^{Z_i}$ is adapted to the sequence    $I_i$.
\end{lemma}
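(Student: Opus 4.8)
The goal is to verify the condition in Definition \ref{def:adapted sequence}: for every finite subset $\Phi \subset N$ and every element $g \in G$,
$$
\frac{|\{r \in I_i \: : \: \left[g,\widehat{r}\right] + \Phi \subset T_i\}|}{|I_i|} \xrightarrow{i\to\infty} 1,
$$
where $T_i = E_i^{Z_i}$ and $Z_i = \coprod_{j=1}^r I_i x_j$. First I would write $g = \widehat{q}n$ with $q \in Q$ and $n \in N = B^X$, and recall from the split commutator identity (\ref{eq:metabelian identities}) that $\left[g,\widehat{r}\right] = \left[\widehat{q}n,\widehat{r}\right] = -\left[\widehat{r},n\right] = n - n^{\widehat{r}}$ up to the conventions in use; in any case $\left[g,\widehat{r}\right]$ is a fixed element of $N$ translated by the action of $r$ on $n$, so it is supported on the finite set $\mathrm{supp}(n) \cup \mathrm{supp}(n)\cdot r^{-1}$, with all coordinate values lying in a fixed finite subset $D_g \subset B$ depending only on $g$. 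Similarly $\Phi$, being finite, is supported on some finite $Y_\Phi \subset X$ with coordinate values in a fixed finite $D_\Phi \subset B$.

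The plan is then to reduce the claim to two statements: a "support is inside $Z_i$" statement and a "coordinate values are inside $E_i$" statement. For the support: an element $\left[g,\widehat{r}\right] + \varphi$ (with $\varphi \in \Phi$) is supported on $(\mathrm{supp}(n)\cup \mathrm{supp}(n)\cdot r^{-1}) \cup Y_\Phi$. Since $Z_i = \coprod_{j} I_i x_j$ exhausts $X$, for fixed $i$ large the finite set $\mathrm{supp}(n) \cup Y_\Phi$ is contained in $Z_i$; the only moving part is $\mathrm{supp}(n)\cdot r^{-1}$, and I need this to land inside $Z_i$ for a proportion of $r \in I_i$ tending to $1$. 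This is exactly where the \emph{centered} property of the Folner sequence $I_i$ (Definition following Proposition \ref{prop:infty balls are a centered Folner sequence}, and Item (\ref{it:boxes}) of Lemma \ref{lem:controlled Folner sets and transversals}) enters: for each of the finitely many points $x_j$ and each of the finitely many coordinates $x \in \mathrm{supp}(n)$ lying in the orbit $X_j \cong Q/S_j$, the set of $r \in I_i$ with $x\cdot r^{-1} \in I_i x_j$ has proportion $\to 1$, since "$x \cdot r^{-1} \in I_i x_j$" translates, after picking a preimage, into a centered-type condition "$r \in (\text{fixed element}) \cdot I_i S_j$" which the centeredness of $\mathrm{B}_Q(\Sigma, n_i)$ handles. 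Intersecting finitely many proportion-$\to 1$ sets keeps the proportion $\to 1$.

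For the coordinate values: once the support of $\left[g,\widehat{r}\right] + \varphi$ sits inside $Z_i$, each of its coordinate values lies in a translate of a fixed finite set, namely in $D_g + D_\Phi$ (a fixed finite subset of $B$, since addition in $B$ of two finite sets is finite). Because $E_i = \mathrm{B}_B(i!,\Delta)$ is an exhausting sequence in $B$, for $i$ large enough $D_g + D_\Phi \subset E_i$, so every coordinate value is automatically in $E_i$. Hence for $i$ large, the event "$\left[g,\widehat{r}\right] + \Phi \subset T_i$" is implied by "$\mathrm{supp}(n)\cdot r^{-1} \subset Z_i$ and $\mathrm{supp}(n)\cup Y_\Phi \subset Z_i$", and the latter has $I_i$-proportion $\to 1$. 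The main obstacle is the bookkeeping in the support argument: carefully matching the orbit structure $X = \coprod_j X_j$ with $X_j \cong Q/S_j$ against the sets $I_i x_j$, and checking that "$x \cdot r^{-1} \in I_i x_j$" is genuinely a centered condition on $r$ — one has to lift $x \in X_j$ to some $q_x \in Q$ with $q_x x_j = x$, observe that $x r^{-1} \in I_i x_j \iff q_x r^{-1} \in I_i S_j \iff r \in (S_j)^{-1} q_x^{-1} I_i$ modulo $S_j$, and then invoke centeredness of $I_i$ together with the fact that $I_i$ is a transversal for $Q_i \supseteq S_j$-type subgroups (Item (\ref{it:transversal}) of Lemma \ref{lem:controlled Folner sets and transversals}) to pass the centeredness through the quotient by $S_j$. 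Everything else is finite-union and finite-set arithmetic.
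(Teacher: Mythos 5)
Your proposal is correct and follows essentially the same route as the paper's proof: use the split commutator identity to reduce $[g,\widehat{r}]$ to $n - n^{r}$, separate the claim into a support condition (handled via the centered property of $I_i$ and the orbit structure $Z_i = \coprod_l I_i x_l$) and a coordinate-values condition (handled because $E_i$ exhausts $B$), and conclude by a finite intersection of proportion-$\to 1$ events. One small imprecision: the coordinate values of $n - n^r + \varphi$ lie in $D_g - D_g + D_\Phi$, not $D_g + D_\Phi$, so you should take $D_g$ symmetric (the paper builds this in by requiring $\{n,-n\}\cup\Phi \subset D^{Z_j}$ and then uses $D+D+D$); this is cosmetic and does not affect the argument.
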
 
 \begin{proof}
 

 Let    $g \in G$ be a fixed element with    $g =  q n$ where $q \in Q$ and $n\in N$. Let $\Phi \subset N$ be any finite subset.   We are required to show that
 \begin{equation}
 \frac{ | \{ r \in I_i \: : \: \left[g,r \right]  + \Phi \subset T_i    \}  | }{ |I_i| } \xrightarrow{i\to\infty}  1.
 \end{equation}

There is a finite subset   $D \subset B$ and an index $j \in \NN$  such that $\{n, -n\} \cup \Phi \subset D^{Z_{j}}$. 
Recall that $Z_i = \coprod_{j=1}^r I_i x_j$. Combined with  the fact that $I_i$ is centered, this gives
 \begin{equation}
 \label{eq:funky}
 \frac{ | \{ r \in I_i \: : \:  rZ_j \subset Z_i    \}  | }{ |I_i| } \xrightarrow{i\to\infty}  1.
 \end{equation}
Moreover $Z_j \subset Z_i$ for all $i \in \NN$ sufficiently large (in fact, this happens for all $ i \ge j$). Since $E_i$ is an exhausting sequence in the group $B$ we have $D+D+D \subset E_i$ for all $i \in \NN$ sufficiently large.

It follows from Equation (\ref{eq:metabelian identities}) that   the commutator $\left[g,r\right]$ is equal to
\begin{equation}
 \label{eq:the comm}
 \left[g,r\right] =  
\left[q n,r\right] = 
  [n,r] = n - n^r
 \end{equation}
for every element $r \in I_i$. Therefore Equation $(\ref{eq:the comm})$ implies 
\begin{equation}
\label{eq:triple D}
 \left[g,r\right] + \Phi \subset D^{Z_j} + D^{rZ_j} + D^{Z_j} \subset (D+D+D)^{Z_j \cup rZ_j}. 
 \end{equation}

Observe that the two conditions $D+D+D \subset E_i$ and $Z_j \cup rZ_j \subset Z_i$ imply that $\left[g,r\right] + \Phi \subset T_i = B_i^{Z_i}$ according to Equation $(\ref{eq:triple D})$. The result follows from this observation and making use of Equation (\ref{eq:funky}).
\end{proof}


%
%
%


%

\subsection*{Controlled approximations}

Let $H \le G$ be a subgroup with Goursat triplet $\left[H\right] = \left[R, N_H, \alpha_H\right]$. From now on we will denote    $M_{i} = B^{Y_i}$.


\begin{lemma}
\label{lem:the subgroups N_i}
There is a sequence of finite index subgroups $N_i \le N$ satisfying
\begin{equation}
\label{eq:properties of N_i}
Q_i   \le \mathrm{N}_G(N_i), \quad N_H \cap T_{i} = N_i \cap T_{ i} \quad \text{and} \quad N_H \cap M_{i} \le N_i \cap M_{i}
\end{equation}
for all $i \in \NN$.
\end{lemma}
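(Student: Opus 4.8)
\emph{Overall strategy.} The plan is to build $N_i$ out of two complementary pieces: a finite-index $R$-invariant subgroup of $M_i = B^{Y_i}$ chosen to match $N_H$ on the finite box $T_i$, together with the \emph{whole} kernel of the $Q$-factor projection $\pi_i \colon N \to B^{V_i\backslash X}$. Since $\ker\pi_i$ is $\mathbb{Z}[Q]$-invariant and $M_i$ is a complement to it inside $N$, adjoining $\ker\pi_i$ will supply finiteness of index and $Q_i$-invariance almost for free, leaving only a finitely-generated-module problem to solve inside $M_i$.

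First I would recall, via Lemma \ref{lem:if base is abelian there is a quotient of wreath products} applied to the $Q$-factor map $f_i\colon X\to V_i\backslash X$, that there is a surjective $\mathbb{Z}[Q]$-module homomorphism $\pi_i \colon N = B^X \to B^{V_i\backslash X}$ whose kernel is a $\mathbb{Z}[Q]$-submodule, and that since $Y_i$ is an $R$-invariant transversal for $f_i$ one gets a direct sum decomposition $N = M_i \oplus \ker\pi_i$ in which $M_i$ is an $\mathbb{Z}[R]$-submodule (Proposition \ref{prop:BY for Y invariant is normalized}) and $\pi_i|_{M_i}$ is an isomorphism onto $B^{V_i\backslash X}$. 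I would also record two small observations: that $V_i$ acts trivially on $B^{V_i\backslash X}$ — indeed $Q$ is abelian, so $V_i$ is normal and fixes its own orbit space $V_i\backslash X$, and Proposition \ref{prop:kernel acts trivially} applies — and that $N_H$ is $R$-invariant, being a normal subgroup of $HN$, whose image in $Q$ is $R$, together with the fact that the $Q$-action on $N$ factors through conjugation.

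The crucial step is that $M_i = B^{Y_i}$ is a \emph{finitely generated} $\mathbb{Z}[R]$-module. Here $f_i$ restricts to an $R$-equivariant bijection $Y_i \to V_i\backslash X$, and since $V_i$ acts trivially on $V_i\backslash X$ the $R$-action there agrees with the action of the finite-index subgroup $Q_i = R\oplus V_i$ of $Q$; as $V_i\backslash X$ has finitely many $Q$-orbits it has finitely many $Q_i$-orbits, hence finitely many $R$-orbits, hence so does $Y_i$. Consequently $M_i$, and therefore $M_i/(N_H\cap M_i)$, is finitely generated over $\mathbb{Z}[R]$, so $R\ltimes\big(M_i/(N_H\cap M_i)\big)$ is a finitely generated metabelian group, which is residually finite by Hall's Theorem \ref{thm:Hall}. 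Applying residual finiteness to the finitely many nonzero images in $M_i/(N_H\cap M_i)$ of the elements of $T_i$ — noting $T_i = E_i^{Z_i}\subseteq B^{Y_i}=M_i$ because $Z_i\subseteq Y_i$ — produces a finite-index $R$-invariant subgroup $P_i\le M_i$ with $N_H\cap M_i\le P_i$ and $P_i\cap T_i = N_H\cap T_i$.

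Finally I would set $N_i = P_i + \ker\pi_i$ and check the three required properties. From $N = M_i\oplus\ker\pi_i$ one gets $[N:N_i] = [M_i:P_i] < \infty$ and $N_i\cap M_i = P_i$, whence $N_H\cap M_i\le N_i\cap M_i$ and $N_i\cap T_i = P_i\cap T_i = N_H\cap T_i$. For $Q_i = R\oplus V_i$ normalizing $N_i$: the summand $\ker\pi_i$ is $\mathbb{Z}[Q]$-invariant and $P_i$ is $R$-invariant, so $N_i$ is $R$-invariant; and for $v\in V_i$, triviality of the $V_i$-action on $B^{V_i\backslash X}$ gives $\pi_i(v\cdot p) = v\cdot\pi_i(p) = \pi_i(p)$ for $p\in P_i$, so $v\cdot P_i\subseteq P_i+\ker\pi_i$, and the same with $v^{-1}$ in place of $v$ yields $v\cdot N_i = N_i$. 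Hence $Q_i\le\mathrm{N}_G(N_i)$, as desired. The only step that needs genuine care is the finite generation of $M_i$ as a $\mathbb{Z}[R]$-module: $Y_i$ is typically infinite, so one must exploit its structure as an $R$-set (coming from Lemma \ref{lem:controlled Folner sets and transversals}) rather than pretend $M_i$ is a finitely generated abelian group; the rest is bookkeeping with the direct sum decomposition.
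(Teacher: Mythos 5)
Your proof is correct and takes essentially the same approach as the paper's: both pass to the quotient/complement via the $Q$-factor map $\pi_i$, invoke Hall's theorem (the paper through Proposition \ref{prop:finite index normal subgroups in a metabelian group} applied in $\Gamma_i = Q_i \ltimes B^{\overline{X}_i}$, you by re-deriving it for $R \ltimes (M_i/(N_H\cap M_i))$), and then reassemble $N_i$ by adjoining $\ker\pi_i$. The only difference is cosmetic bookkeeping — the paper works on the image $B^{\overline{X}_i}$ and pulls back via $\pi_i^{-1}$, while you work on the isomorphic complement $M_i = B^{Y_i}$ and add $\ker\pi_i$ directly.
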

\begin{proof}
  Consider the $Q$-sets $\overline{X}_i = V_i \backslash X$ with the   associated    $Q$-factor maps  $f_i :  X \to  {\overline{X}_i}$. These give rise to  surjective $\ZZ\left[Q\right]$-module homomorphisms
	$ \pi_i : B^X \to B^{\overline{X}_i}$ as in  Lemma \ref{lem:if base is abelian there is a quotient of wreath products}.   Consider the auxiliary permutational   wreath product group
	$$\Gamma_i = Q_i \ltimes B^{\overline{X}_i} $$
  and  its  subgroup
$$\overline{H}_i = Q_i \pi_i(H \cap M_i) \le   \Gamma_i.$$
defined for every $i \in \NN$. The subgroups $\Gamma_i$ are finitely generated, as each $Q_i$, being a finite index subgroup of $Q$, acts on $\overline{X}_i$ with finitely many orbits.

Since the subsets $Y_i \subset X$ are $R$-invariant, we have  $R \le \mathrm{N}_G(H \cap M_i )$  for all $i \in \NN$ according to Proposition \ref{prop:BY for Y invariant is normalized}. In addition,  $V_i$ is acting trivially on the $Q$-factor  set $\overline{X}_i$ and   on $B^{\overline{X}_i}$, see Proposition \ref{prop:kernel acts trivially}. As $Q_i = R \oplus V_i$ by Item (\ref{it:direct sum}) of \ref{lem:controlled Folner sets and transversals}, this shows that the subsets   $\overline{H}_i$ are indeed subgroups.


Take $\overline{T}_i = \pi_i(T_i)$. We may apply Proposition \ref{prop:finite index normal subgroups in a metabelian group} with respect to the subgroup $\overline{H}_i$ and   the finite subset $\overline{T}_i$ of the finitely generated metabelian group  $\Gamma_i$.  This results in   a finite index subgroup  $\overline{N}_i$ of $B^{\overline{X}_i}$ normalized  by $Q_i$, containing the subgroup $\pi_i(H \cap M_i)$ and satisfying
$ \overline{H}_i  \cap \overline{T}_{i} =  \overline{N}_i \cap \overline{T}_i $.

We now pull  back the subgroup $\overline{N}_i$   by letting   $N_i = \pi_i ^{-1}(\overline{N}_i)$ so that the subgroup $N_i$ has finite index in  $N$. It follows from Lemma \ref{lem:if base is abelian there is a quotient of wreath products} that   the subgroups $N_i$ satisfy Equation (\ref{eq:properties of N_i}) as required.
\end{proof}

 
 \begin{lemma}
 \label{lem:V_i acts trivially}
Let $g \in G$ be any element. Then $\left[v,g\right] \in N_i$ for all $ i \in \NN$ and for all elements $v \in V_i$.
 \end{lemma}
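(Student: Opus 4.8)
The plan is to reduce $[v,g]$ to a commutator lying inside $N$ and then push it through the quotient map $\pi_i$. First I would use the splitting $G = Q \ltimes N$ to write $g = qn$ with $q \in Q$ (viewed inside $G$) and $n \in N$. Since $Q$ is abelian we have $q^{-1}vq = v$, so
$$[v,g] = (qn)^{-1} v (qn) v^{-1} = n^{-1}q^{-1}vq\, n v^{-1} = n^{-1} v n v^{-1} = [v,n];$$
the same conclusion follows at once from Equation (\ref{eq:metabelian identities}) applied to the first element $v$ (whose $N$-part is trivial) and the second element $g$. In particular $[v,g] = [v,n] \in N$.

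Next, recall from the proof of Lemma \ref{lem:the subgroups N_i} that $N_i = \pi_i^{-1}(\overline{N}_i)$, so $\ker \pi_i \le N_i$; it therefore suffices to check that $\pi_i([v,n]) = 0$. By Lemma \ref{lem:if base is abelian there is a quotient of wreath products} the map $\pi_i : B^X \to B^{\overline{X}_i}$ is a homomorphism of $\ZZ[Q]$-modules, so it is equivariant for the $Q$-action and hence $\pi_i([v,n]) = [v, \pi_i(n)]$, the commutator now being computed in $B^{\overline{X}_i}$. Since $\overline{X}_i = V_i \backslash X$, the subgroup $V_i$ lies in the kernel of the $Q$-action on $\overline{X}_i$, and therefore acts trivially on $B^{\overline{X}_i}$ by Proposition \ref{prop:kernel acts trivially}. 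As $v \in V_i$ this gives $[v,\pi_i(n)] = 0$, whence $[v,n] \in \ker\pi_i \le N_i$, as claimed.

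There is essentially no obstacle here; the only points that need care are bookkeeping ones. One must use that the splitting lets us write $g = qn$ with $q$ literally an element of the subgroup $Q$, so that the abelianness of $Q$ genuinely removes the $q$-contribution from $[v,g]$. And one must use that it is precisely the subgroup $V_i$, rather than all of $Q_i = R \oplus V_i$, that acts trivially on $\overline{X}_i$ and on $B^{\overline{X}_i}$ — which is exactly why the statement is phrased only for $v \in V_i$.
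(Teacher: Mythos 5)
Your proof is correct and takes essentially the same route as the paper: reduce $[v,g]$ to $[v,n]$ via the metabelian commutator identity, then use that $V_i$ acts trivially on $B^{\overline{X}_i}$ and $\ker\pi_i \le N_i$. The paper phrases the final step as ``$V_i$ acts trivially on $N/N_i$'' whereas you show $[v,n]\in\ker\pi_i$ directly, but these are the same observation.
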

 \begin{proof}
We may assume that $g = qm$ for some $q \in Q$ and $m\in N$.  Fix some index $i \in \NN$ and let $v \in V_i$  be any   element.  It follows from Equation (\ref{eq:metabelian identities}) that
 $$\left[v,g\right] = \left[v,qm\right] =   \left[v,m\right].$$
Since $\ker \pi_i \le N_i$, the subgroup $V_i$ lies in the kernel of the $Q$-action on the quotient $N/N_i$, see Proposition \ref{prop:kernel acts trivially}. We conclude that   $\left[v,m\right] \in N_i$ as required.
 \end{proof}

\begin{lemma}
\label{lem:controlled approximation for the subgroup H}
The subgroup $H$ admits a controlled approximation $(K_i, M_i, T_i)$ so that  the finite   index   subgroups $K_i \le G$ admit Goursat triplets
  $\left[K_i \right] = \left[Q_i, N_i, \alpha_i\right] $.
 \end{lemma}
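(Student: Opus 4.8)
The plan is to simply assemble the objects already produced in Lemmas \ref{lem:controlled Folner sets and transversals}, \ref{lem:adapted subsets T_i} and \ref{lem:the subgroups N_i}, and then supply the one piece still missing, namely the homomorphisms $\alpha_i$ turning $\left[Q_i,N_i,\alpha_i\right]$ into a genuine Goursat triplet. We already have the finite index subgroups $Q_i = R \oplus V_i$ of $Q$ Chabauty converging to $Q_H = R$, the subgroups $M_i = B^{Y_i}$ of $N$, the subsets $T_i = E_i^{Z_i}$ of $N$, and the finite index subgroups $N_i \le N$ of Lemma \ref{lem:the subgroups N_i} with $Q_i \le \mathrm{N}_G(N_i)$, $N_H \cap T_i = N_i \cap T_i$ and $N_H \cap M_i \le N_i \cap M_i$. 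Most of the conditions of Definition \ref{def:controlled approximation} are then immediate: $T_i \subset M_i$ since $Z_i \subset Y_i$; $H \le \mathrm{N}_G(M_i)$ by Proposition \ref{prop:BY for Y invariant is normalized}, since $Y_i$ is $R$-invariant and the image of $H$ in $Q$ is $R$; the subsets $T_i = E_i^{Z_i}$ exhaust $N = B^X$ because $E_i$ exhausts $B$ and $Z_i$ exhausts $X$; and Item (\ref{it:subgroups satisfy}) together with Item (\ref{it:Q_i converge}) is exactly what Lemma \ref{lem:the subgroups N_i} and Lemma \ref{lem:controlled Folner sets and transversals} provide. What remains is to construct $\alpha_i : Q_i \to \mathrm{N}_G(N_i)/N_i$ with $\alpha_i(q)N_i \subset \widehat q N$ and with $\alpha_{i|Q_H}$ consistent with $\alpha_H$.

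First I would record that the sequence $N_i$ Chabauty converges to $N_H$: given $n \in N_H$ it eventually lies in $T_i$, hence in $N_i \cap T_i \subset N_i$; given $n \notin N_H$ it eventually lies in $T_i$, and if it were in $N_i$ it would lie in $N_i \cap T_i = N_H \cap T_i \subset N_H$, a contradiction. I would also note $H \le \mathrm{N}_G(N_i)$: the image of $H$ in $Q$ is $R \le Q_i$, the subgroup $Q_i$ normalizes $N_i$, and the conjugation action of $G$ on $N$ factors through $Q$. These are precisely the hypotheses of Corollary \ref{cor:on construction of subgroups using Goursat triplets}, which I would apply to obtain, for all $i$ sufficiently large, homomorphisms $\beta_i : Q_H = R \to \mathrm{N}_G(N_i)/N_i$ consistent with $\alpha_H$ and automatically satisfying $\beta_i(r)N_i \subset \widehat r N$ for every $r \in R$; discarding the finitely many small indices is harmless.

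The substantive step is the coherent extension of $\beta_i$ across the complementary summand $V_i$. Using the splitting $G = Q \ltimes N$, define $\alpha_i$ on $V_i$ by $\alpha_i(v) = \widehat v N_i = vN_i$, which lies in $\mathrm{N}_G(N_i)/N_i$ because $V_i \le Q_i \le \mathrm{N}_G(N_i)$, and set $\alpha_i(rv) = \beta_i(r)\,vN_i$ for $r \in R$, $v \in V_i$. To see this is a well-defined homomorphism one must check that the images of $R$ and of $V_i$ commute in $\mathrm{N}_G(N_i)/N_i$: if $g_r \in \mathrm{N}_G(N_i)$ represents $\beta_i(r)$ and $v \in V_i$, then Lemma \ref{lem:V_i acts trivially} gives $\left[v,g_r\right] \in N_i$, which says exactly that the classes of $v$ and $g_r$ commute modulo $N_i$. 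The Goursat condition $\alpha_i(q)N_i \subset \widehat q N$ follows from $\beta_i(r)N_i \subset \widehat r N$, $vN_i \subset \widehat v N$ and $\widehat{rv}N = \widehat r\widehat v N$. Hence $\left[Q_i,N_i,\alpha_i\right]$ is a valid Goursat triplet and determines a subgroup $K_i \le G$ by Proposition \ref{prop:description of subgroups of direct products}, of index $\left[N:N_i\right]\left[Q:Q_i\right] < \infty$; by construction $\alpha_{i|Q_H} = \beta_i$ is consistent with $\alpha_H$, so together with the properties of $Q_i$, $N_i$, $M_i$, $T_i$ recorded above, $(K_i, M_i, T_i)$ is a controlled approximation for $H$. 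I expect the main obstacle to be precisely this extension step: the fact that $V_i$ acts trivially modulo $N_i$ (Lemma \ref{lem:V_i acts trivially}) is what makes $\alpha_i$ well defined on $Q_i = R \oplus V_i$, and checking the Chabauty convergence $N_i \to N_H$ needed to invoke Corollary \ref{cor:on construction of subgroups using Goursat triplets} is the other point deserving care.
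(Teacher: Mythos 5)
Your proof is correct and follows exactly the paper's route: invoke Corollary \ref{cor:on construction of subgroups using Goursat triplets} on the summand $Q_H = R$ to get homomorphisms consistent with $\alpha_H$, extend across the complementary summand $V_i$ by $v \mapsto vN_i$, and use Lemma \ref{lem:V_i acts trivially} to show the images commute so the extension is a well-defined homomorphism. You also spell out two hypotheses (Chabauty convergence $N_i \to N_H$ and $H \le \mathrm{N}_G(N_i)$) that the paper leaves implicit; that is a welcome addition but not a different method.
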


 \begin{proof} 
In light of Equation (\ref{eq:properties of N_i}),   the last remaining step in constructing the controlled approximation $(K_i, M_i,T_i)$ for the subgroup $H$ is to determine the  injective homomorphisms $\alpha_i$. We may apply Corollary \ref{cor:on construction of subgroups using Goursat triplets} and obtain a sequence of injective homomorphisms
		$$\alpha'_i : Q_H \to \mathrm{N}_G(N_i)/N_i$$
		consistent with $\alpha_H$ and defined for all $i \in \NN$ sufficiently large. 
		Let $\alpha_i$ be the extension of each homomorphism $\alpha'_i$ to $Q_i = Q_H \oplus V_i$ determined by the condition     $$\alpha_i(q) = qN_i$$ for all $q \in V_i$. 	The fact that $V_i \le Q_i   \le \mathrm{N}_G(N_i)$ for all $i \in \NN$ implies that the image of each map $\alpha_i$ belongs to $\mathrm{N}_G(N_i)$.
		
To see that every $\alpha_i$ is indeed a group homomorphism, we are required to verify that $\alpha_i(r)$ and $\alpha_i(v)$ commute in the group $\mathrm{N}_G(N_i)/N_i$ for every pair of elements $r \in R$ and $v \in V_i$. This is an immediate consequence of Lemma  \ref{lem:V_i acts trivially}.
%
 \end{proof}
 
 From now on we will refer to the sequence $(K_i, M_i, T_i)$   constructed above as the   \emph{standard controlled approximation} to the subgroup $H$.
 
\begin{lemma}
\label{lem:condition for normalizer}
Consider the standard controlled approximation $K_i$. Let $m \in N$ be any element. If $\left[q,m\right] \in N_i $ for all $q \in Q_i$ then   $m \in \mathrm{N}_N(K_i)$.
\end{lemma}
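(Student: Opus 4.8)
The plan is to show directly that $m^{-1}K_im\subseteq K_i$ and then upgrade this to an equality using that $K_i$ has finite index in $G$. Recall that $K_i$ is the standard controlled approximation, so by Lemma~\ref{lem:the subgroups N_i} (Equation~(\ref{eq:properties of N_i})) the subgroup $N_i$ is normalized by $Q_i$, and by Proposition~\ref{prop:description of subgroups of direct products}, in the split case, every element of $K_i$ can be written as $g=q(a_q+n)$ with $q\in Q_i$ and $n\in N_i$, where $\alpha_i(q)=\{q(a_q+n):n\in N_i\}$ for a fixed $a_q\in N$.

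The first step is to reformulate the hypothesis. For $q\in Q_i$ put $e_q=q^{-1}m^{-1}qm$; this lies in $N$ since $N\nrm G$. With the paper's conventions $q^m=m^{-1}qm=q\,e_q$, hence $[q,m]=q^mq^{-1}=q\,e_q\,q^{-1}$. Since $q\in Q_i$ normalizes $N_i$ and $[q,m]\in N_i$ by assumption, conjugating back gives $e_q=q^{-1}[q,m]q\in q^{-1}N_iq=N_i$. Thus the hypothesis is equivalent to the statement that $q^{-1}m^{-1}qm\in N_i$ for every $q\in Q_i$.

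The core computation comes next. Because $N$ is abelian and $m\in N$, conjugation by $m$ fixes $N$ pointwise, so for $g=q(a_q+n)\in K_i$ we obtain
$$g^m=q^m\,(a_q+n)=q\,e_q\,(a_q+n)=q\bigl(e_q+a_q+n\bigr).$$
Since $e_q\in N_i$ and $n\in N_i$ we have $e_q+a_q+n\in a_q+N_i$, so $g^m\in\alpha_i(q)\subseteq K_i$. This establishes $K_i^m\subseteq K_i$. Finally, conjugation by $m$ is an automorphism of $G$, whence $[G:K_i^m]=[G:K_i]<\infty$; combined with the inclusion just proved this forces $K_i^m=K_i$, that is, $m\in\mathrm{N}_G(K_i)\cap N=\mathrm{N}_N(K_i)$.

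I do not expect a serious obstacle: the content is an elementary commutator manipulation in the split metabelian group $G=Q\ltimes N$, and the only facts used beyond the definitions are that $Q_i$ normalizes $N_i$ (already recorded in Lemma~\ref{lem:the subgroups N_i}) and that $N$ is abelian. The points needing a little care are the bookkeeping with the conjugation and commutator conventions, and the fact that $K_i$ is infinite, so the passage from $K_i^m\subseteq K_i$ to $K_i^m=K_i$ must go through the finite-index argument rather than a cardinality count.
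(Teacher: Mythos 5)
Your proof is correct and takes essentially the same route as the paper: write $g\in K_i$ in Goursat form $g=q(a_q+n)$, compute the conjugate $g^m$ as a translate by a commutator lying in $N_i$, and conclude $g^m\in\alpha_i(q)\subseteq K_i$. The one improvement over the paper's terse version is that you explicitly close the gap from $K_i^m\subseteq K_i$ to $K_i^m=K_i$ via the finite-index argument, a step the paper leaves implicit.
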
 
 \begin{proof}
Fix an index $i \in \NN$ and an element $g \in K_i$. We are required  to show that $g^m \in K_i$ with respect to the element $m \in N$ as in   the statement of lemma.

The element $g $ can be written   as $g = q(a_q+n)$ where the elements $q \in Q_i$, $n \in N_i$ and $a_q \in N$ satisfy   $\alpha_i(q) = q(a_q + N_i)$.  
It follows from Equation (\ref{eq:metabelian identities}) that
$$ 
g^m = g\left[g,m\right] = g\left[q(a_q+n), m\right] = g \left[q,m\right] = q(a_q + n + \left[q,m\right]), $$
cf. Proposition 	\ref{prop:condition on conjugate to be in H}.
If $\left[q,m\right] \in N_i$ then $g^m \in K_i$ as required.
 \end{proof}

 \begin{lemma}
 \label{lem:containment of normalizers - weak}
 The standard controlled approximation $(K_i, M_i, T_i)$ to the subgroup  $H  $ satisfies 
$ \mathrm{N}_N(H) \cap M_i \le \mathrm{N}_N(K_i) $ for all $i \in \NN$.
 \end{lemma}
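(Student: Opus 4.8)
The plan is to reduce everything to Lemma \ref{lem:condition for normalizer}, which says that an element $m \in N$ normalizes $K_i$ as soon as $[q,m] \in N_i$ for all $q \in Q_i$. So I would fix $i \in \NN$ and $m \in \mathrm{N}_N(H) \cap M_i$, and aim to show $[q,m] \in N_i$ for every $q \in Q_i$. Using the splitting $Q_i = R \oplus V_i$ from Item (\ref{it:direct sum}) of Lemma \ref{lem:controlled Folner sets and transversals}, write $q = rv$ with $r \in R = Q_H$ and $v \in V_i$. The commutator identity (\ref{eq:standard commutator identities}) gives $[q,m] = [r,m]^v [v,m]$, and since $N_i$ is normalized by $Q_i$ (Lemma \ref{lem:the subgroups N_i}) it is enough to prove that $[r,m]$ and $[v,m]$ lie in $N_i$ separately. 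The term $[v,m] \in N_i$ is exactly Lemma \ref{lem:V_i acts trivially} applied to the element $m \in N \subset G$.

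It then remains to show $[r,m] \in N_i$ for $r \in Q_H$, and I would establish two containments. First, $[\widehat r, m] \in N_H$: pick any $g \in H$ with $\pi(g) = r$, say $g = \widehat r n$ with $n \in N$; by the metabelian identity (\ref{eq:metabelian identities}) one has $[g,m] = [\widehat r n, m] = [\widehat r, m]$ because $m,n \in N$ and $N$ is abelian; since $m$ normalizes $H$ we have $g^m \in H$, and the criterion of Proposition \ref{prop:condition on conjugate to be in H} (applied with $f = m$, whose hypothesis $r \in Q_H$ holds and whose defect term $n - a_r$ already lies in $N_H$ because $g \in H$) forces $[\widehat r, m] \in N_H$. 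Second, $[\widehat r, m] \in M_i$: indeed $m \in M_i = B^{Y_i}$ and $Y_i$ is $R$-invariant by Item (\ref{it:transversal}) of Lemma \ref{lem:controlled Folner sets and transversals}, so $B^{Y_i}$ is $r$-invariant by Proposition \ref{prop:BY for Y invariant is normalized}, the conjugate $m^{\widehat r}$ again lies in $B^{Y_i}$, and $[\widehat r, m]$ is a difference of two elements of $B^{Y_i}$. Combining, $[\widehat r, m] \in N_H \cap M_i$, and by Equation (\ref{eq:properties of N_i}) of Lemma \ref{lem:the subgroups N_i} we get $N_H \cap M_i \le N_i \cap M_i \subseteq N_i$, so $[r,m] \in N_i$. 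Lemma \ref{lem:condition for normalizer} then yields $m \in \mathrm{N}_N(K_i)$.

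The argument is essentially a bookkeeping of Goursat triplets together with the split commutator identities (\ref{eq:standard commutator identities}) and (\ref{eq:metabelian identities}); the only step deserving genuine care is the implication $m \in \mathrm{N}_N(H) \Rightarrow [\widehat r, m] \in N_H$ for $r \in Q_H$, where one must simultaneously use that conjugation by $m \in N$ fixes $N_H$ and $Q_H$ (as $N$ is abelian) and invoke the explicit containment criterion of Proposition \ref{prop:condition on conjugate to be in H}. No difficulty beyond the lemmas already proved in this section is expected to arise.
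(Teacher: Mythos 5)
Your proof is correct and follows essentially the same route as the paper: reduce to Lemma \ref{lem:condition for normalizer}, decompose $q = rv$ along $Q_i = R \oplus V_i$, dispose of $[v,m]$ via Lemma \ref{lem:V_i acts trivially}, and show $[r,m] \in N_H \cap M_i \le N_i$. The only cosmetic difference is that the paper proves $[r,m] \in N_H$ by directly noting $[r,m] = [h,m] \in H \cap N$ for a lift $h \in H$ of $r$, while you arrive at the same containment by invoking Proposition \ref{prop:condition on conjugate to be in H} with $f = m$; these are reformulations of the same observation.
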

 \begin{proof}
Fix an $i \in \NN$ and consider any element   $m \in \mathrm{N}_N(H) \cap M_i$. Taking into account  Lemma \ref{lem:condition for normalizer}, it suffices to show that $\left[q,m\right] \in N_i$ for all $q \in Q_i$.

Consider some fixed element $q \in Q_i$. Write $q = r   v$ for some uniquely determined elements $r \in R = Q_H$ and $v \in V_i$ (we are using a multiplicative notation for the group $Q$).

The fact that  $R \le \mathrm{N}_G(M_i)$ combined with   $m \in M_i$ implies  $\left[r,m\right] \in M_i$. In addition, as $R = Q_H$ there exists an element $h \in H$ with $ h = r l$ for some $ l \in N$. As $ m\in \mathrm{N}_N(H)$ we have from Equation (\ref{eq:metabelian identities}) that
 $$\left[r,m\right] = \left[rl,m\right] = \left[h,m\right] \in H \cap N = N_H.$$
 It follows from Equation (\ref{eq:controlled})  that $\left[r,m\right] \in M_i \cap N_H \le N_i$.  
 
 We know that $[v,m]  \in    N_i$ from Lemma  \ref{lem:V_i acts trivially}. 
 Therefore both commutators $[v,m]$ and $[r,m]$ belong to $N_i$. We may apply   Equation (\ref{eq:standard commutator identities}) and obtain
\begin{equation}
\label{eq:combination commutators}
 [q,m ] = [vr,m] = [v,m]^r + [r,m] \in N_i \le K_i \le \mathrm{N}_N(K_i)
\end{equation} 
as required.  We used the fact that $R \le Q_i \le \mathrm{N}_G(N_i)$ to deduce $[v,m]^r \in N_i$ in the above Equation (\ref{eq:combination commutators}).
 \end{proof}
 
%
 
\subsection*{  Folner sequences  of transversals} 
 
Recall that $H$ is a fixed subgroup of $G$ with $R = Q_H$ and $(K_i,M_i,T_i)$ is     the standard controlled approximation for $H$.

 \begin{lemma}
 \label{lem:F_i are transversals}
  If $\left[N:\mathrm{N}_N(H)\right] < \infty$    
 then $F_i = I_i T_i = I_i E_{i }^{Z_i}$    are finite-to-one transversals of $\mathrm{N}_G(K_i)$ in $G$   for all $i \in \NN$ sufficiently large.
 \end{lemma}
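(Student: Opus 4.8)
The plan is to realize $F_i$ as a product of transversals through the split extension $1\to N\to G\to Q\to 1$ and to match its two factors against the Goursat data of $\mathrm{N}_G(K_i)$. Write $[\mathrm{N}_G(K_i)]=[R_i,L_i,\beta_i]$ for the Goursat triplet of $\mathrm{N}_G(K_i)$, so $R_i=\mathrm{N}_G(K_i)N/N$ and $L_i=\mathrm{N}_N(K_i)$, and note $Q_i\le R_i$, $N_i\le L_i$ since $K_i\le\mathrm{N}_G(K_i)$. I will prove, for all $i$ large: (i) $I_i$ is a finite-to-one transversal of $R_i$ in $Q$; and (ii) $T_i$ is a finite-to-one transversal of $L_i$ in $N$. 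Granting this, decompose $I_i=\coprod_a I_i^{(a)}$ and $T_i=\coprod_b T_i^{(b)}$ into genuine transversals of $R_i$ and $L_i$; by Proposition \ref{prop:product of finite-to-one is finite-to-one transversal} each $\widehat{I_i^{(a)}}\,T_i^{(b)}$ is a genuine transversal of $\mathrm{N}_G(K_i)$, and these are pairwise disjoint because in $G=Q\ltimes N$ an element determines first its $Q$-component and then its $N$-component, so their union $F_i$ is a finite-to-one transversal of $\mathrm{N}_G(K_i)$.

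Item (i) needs no new ideas: $I_i=\mathrm{B}_Q(\Sigma,n_i)$ is a finite-to-one transversal of $Q[n_i]$ by Proposition \ref{prop:infty balls are a transversal} (recorded also in Lemma \ref{lem:controlled Folner sets and transversals}(\ref{it:boxes})), and $Q[n_i]\le Q_i\le R_i$; a finite-to-one transversal of a subgroup is a finite-to-one transversal of every larger subgroup (refine each genuine transversal of the smaller group into genuine transversals of the larger one), so $I_i$ is one for $R_i$.

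For item (ii) I first pin down $L_i$. By Lemma \ref{lem:condition for normalizer} together with its converse — a one-line computation exactly as in that proof — and by Lemma \ref{lem:V_i acts trivially} one gets $L_i=\{n\in N:[r,n]\in N_i\ \forall r\in Q_H\}$; in particular $N_i\le L_i$, and since $N_i=\pi_i^{-1}(\overline{N}_i)$ also $\ker\pi_i\le L_i$, so $L_i=\pi_i^{-1}(\overline{L}_i)$ with $\overline{L}_i:=\pi_i(L_i)\le B^{\overline{X}_i}$. Because $\pi_i$ restricts to an isomorphism $M_i=B^{Y_i}\xrightarrow{\sim}B^{\overline{X}_i}$ taking $L_i\cap M_i$ onto $\overline{L}_i$, Lemma \ref{lem:containment of normalizers - weak} yields the \emph{uniform} bound
\[[B^{\overline{X}_i}:\overline{L}_i]=[M_i:L_i\cap M_i]\le[M_i:\mathrm{N}_N(H)\cap M_i]\le[N:\mathrm{N}_N(H)]=:m_0<\infty.\]
Now $T_i=E_i^{Z_i}\subseteq B^{Z_i}\subseteq B^{Y_i}$, so applying Lemma \ref{lem:if base is abelian there is a quotient of wreath products}(\ref{it:transveral}) to the $Q$-factor $f_i:X\to\overline{X}_i$ with transversal $Y_i$, statement (ii) becomes: $\pi_i(T_i)=E_i^{f_i(Z_i)}$ is a finite-to-one transversal of $\overline{L}_i$ in $B^{\overline{X}_i}$.

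This last point is the real content. Put $A:=B^{\overline{X}_i}/\overline{L}_i$, a finite abelian group of order $\le m_0$ with induced $Q_i$-action, and let $\phi=\bigoplus_{\bar x}\phi_{\bar x}:\bigoplus_{\bar x\in\overline{X}_i}B\to A$ be the $Q_i$-equivariant quotient map, so the $\phi_{\bar x}:B\to A$ along a $Q_i$-orbit are translates of one another. Take $i$ so large that the exponent of $A$ — which lies in the finite set of divisors of $m_0!$ for all $H$ and $i$ — divides $i!$; then $\ker\phi_{\bar x}\supseteq i!\,B=B[i!]$, and since $E_i=\mathrm{B}_B(i!,\Delta)$ is a finite-to-one transversal of $B[i!]$ (Proposition \ref{prop:infty balls are a transversal}; cf.\ Corollary \ref{cor:universal sequence in fg abelian}) each $\phi_{\bar x}$ maps $E_i$ onto $\mathrm{Im}\,\phi_{\bar x}$ with all fibres of equal size. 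A convolution of these equidistributions shows that $\phi$ sends $E_i^{f_i(Z_i)}=\prod_{\bar x\in f_i(Z_i)}E_i$ onto $\sum_{\bar x\in f_i(Z_i)}\mathrm{Im}\,\phi_{\bar x}$ with equal fibres, so $E_i^{f_i(Z_i)}$ is a finite-to-one transversal of $\overline{L}_i$ precisely when $\sum_{\bar x\in f_i(Z_i)}\mathrm{Im}\,\phi_{\bar x}=A$. If $Q_H$ is finite this is automatic, since then $\overline{X}_i$ is finite and $f_i(Z_i)=\overline{X}_i$ for $i$ large. In general $f_i(Z_i)$ exhausts the infinite set $\overline{X}_i$ while $A$ has order $\le m_0$, so only boundedly many $Q_i$-translates of any $\mathrm{Im}\,\phi_{\bar x}$ are needed to fill the corresponding summand of $A$, and the desired equality holds once $f_i(Z_i)$ is large enough. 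The main obstacle is exactly this surjectivity when $Q_H$ is infinite: it is here that the hypothesis $[N:\mathrm{N}_N(H)]<\infty$ is indispensable — otherwise $\overline{L}_i$ could be spread arbitrarily thin along the infinitely many coordinates outside $f_i(Z_i)$ — and here that the factorial growth built into $E_i$ must be exploited to make the threshold on $i$ uniform over all subgroups $H$ with $Q_H=R$.
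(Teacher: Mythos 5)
Your overall structure is sound and genuinely different from the paper's: you pin down $L_i = \mathrm{N}_N(K_i)$ explicitly via the converse of Lemma \ref{lem:condition for normalizer}, pass through $\pi_i$ to $\overline{L}_i \le B^{\overline{X}_i}$, obtain a uniform index bound $[B^{\overline{X}_i}:\overline{L}_i] \le m_0$ from Lemma \ref{lem:containment of normalizers - weak}, and try to certify $\pi_i(T_i)$ as a finite-to-one transversal of $\overline{L}_i$ directly by a coordinate-wise equidistribution. The monotonicity reductions for $I_i$, the disjointness of the products $\widehat{I_i^{(a)}}T_i^{(b)}$, and the equidistribution-of-convolutions step are all fine. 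But the argument has a real gap at the point you yourself flag as ``the real content'': the surjectivity $\sum_{\bar x \in f_i(Z_i)}\mathrm{Im}\,\phi_{\bar x} = A$. Your justification (``only boundedly many $Q_i$-translates\ldots are needed'' once $f_i(Z_i)$ is large) is not a proof. In particular $\overline{X}_i = V_i\backslash X$ and $A = B^{\overline{X}_i}/\overline{L}_i$ both change with $i$, so ``$f_i(Z_i)$ exhausts $\overline{X}_i$'' is not a stable limiting statement, and there is no fixed set of coset representatives whose supports you can wait for $f_i(Z_i)$ to cover. You also miscast the role of the factorial: the lemma is about a single fixed $H$, so uniformity over $H$ with $Q_H=R$ plays no role; $i!$ is there only so that the fixed index $d=[N:\mathrm{N}_N(H)]$ eventually divides $i!$.

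The correct way to get the surjectivity is exactly the exhaustion observation the paper leads with, and it shows why the paper's route is cleaner. Since $[N:\mathrm{N}_N(H)] = d < \infty$ and the $B^{Z_i}$ exhaust $N$, once $B^{Z_i}$ contains a fixed set of coset representatives for $\mathrm{N}_N(H)$ one has $B^{Z_i} + \mathrm{N}_N(H) = N$, equivalently $[B^{Z_i}:\mathrm{N}_N(H)\cap B^{Z_i}] = d$. Intersecting with $M_i$ and using the modular law (note $B^{Z_i}\subset M_i$) gives $B^{Z_i} + (\mathrm{N}_N(H)\cap M_i) = M_i$; then $\mathrm{N}_N(H)\cap M_i \le L_i\cap M_i$ (Lemma \ref{lem:containment of normalizers - weak}) gives $B^{Z_i} + (L_i\cap M_i) = M_i$, which under $\pi_i|_{M_i}$ is precisely your $B^{f_i(Z_i)} + \overline{L}_i = B^{\overline{X}_i}$. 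The paper never needs to characterize $\mathrm{N}_N(K_i)$ or talk about the quotient $A$ at all: it proves $T_i$ is a finite-to-one transversal of $\mathrm{N}_N(H)\cap B^{Z_i}$ in $B^{Z_i}$ (using $d\mid i!$), hence of $\mathrm{N}_N(H)$ in $N$ and of $\mathrm{N}_N(H)\cap M_i$ in $M_i$, then upgrades through $\mathrm{N}_N(K_i)\cap M_i$ via Lemma \ref{lem:containment of normalizers - weak} and monotonicity, and finally invokes Lemma \ref{lem:if base is abelian there is a quotient of wreath products}(\ref{it:transveral}) to land on $\mathrm{N}_N(K_i)$. Since you end up needing the same exhaustion input anyway, your direct-characterization route does not save work; once you add the modular-law step above, though, your proof closes.
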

 \begin{proof}
Since $X = \bigcup_i Z_i$ the subgroups $ B^{Z_i}$ exhaust the subgroup   $N$. Therefore  
\begin{equation}
\label{eq:indices}
d = \left[N:\mathrm{N}_N(H)\right] = \left[  B^{Z_i}:\mathrm{N}_N(H) \cap B^{Z_i}\right]
\end{equation}  
for all $i \in \NN$ sufficiently large. From this point  and  until the end of the this proof   let $i \in \NN$ be any index satisfying Equation (\ref{eq:indices}) and with $i \ge d$.

Recall that $E_i = \mathrm{B}_B(i!, \Delta) $ is a universal sequence of transversals in the abelian group $B$.  In fact, the proof of    Corollary  \ref{cor:universal sequence in fg abelian} shows that $T_i = E_i^{Z_i}$ is a finite-to-one transversal to any subgroup of $B^{Z_i}$ whose index divides $i!$.
It is therefore a consequence of Equation (\ref{eq:indices}) that the subset $T_i$ is a finite-to-one transversal to $\mathrm{N}_N(H)$ in $N$. 
As $T_i \subset M_i$, an elementary group theoretic  argument shows that 
$T_i$ is at the same time  a finite-to-one transversal to $\mathrm{N}_N(H) \cap M_i$ in $M_i$.

We have previously established that $\mathrm{N}_N(H) \cap M_i \le \mathrm{N}_N(K_i)$, see Lemma  \ref{lem:containment of normalizers - weak}. Therefore $T_i$ is a finite-to-one transversal to $\mathrm{N}_N(K_i) \cap M_i$ in $M_i$. The normalizer $\mathrm{N}_N(K_i)$ satisfies
$$\ker \pi_i \le N_i  \le \mathrm{N}_N(K_i).$$
where $ \pi_i : B^X \to B^{\overline{X}_i}$ is the $\ZZ\left[Q\right]$-module  homomorphism appearing in the proof of Lemma \ref{lem:the subgroups N_i}. Item (4) of  Lemma \ref{lem:if base is abelian there is a quotient of wreath products} shows therefore that $T_i$ is a finite-to-one transversal to the subgroup $\mathrm{N}_N(K_i)$ in $N$.

As $K_i \le \mathrm{N}_G(K_i)$, the subset $I_i$ is a finite-to-one transversal  to the projection $Q_i$ of $\mathrm{N}_G(K_i) $ to the quotient group $Q$. 
We conclude, relying on Proposition 	\ref{prop:product of finite-to-one is finite-to-one transversal},  that   $F_i = I_i T_i = I_i E_{i}^{Z_i}$ is indeed a finite-to-one transversal to $\mathrm{N}_G(K_i)$ in $G$.
 \end{proof}
 
\begin{lemma}
\label{lem:tiny}
 For any element $n \in N$ and any $\varepsilon > 0$  there is an index $j = j(n,\varepsilon) \in \NN$ so that $T_i = E_i^{Z_i}$ is $(n^q,\varepsilon)$-invariant for all indices $ i > j$ and all elements $q \in Q$ satisfying $n^q \in T_i$.
\end{lemma}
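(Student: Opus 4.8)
The plan is to unwind the definition: $T_i$ being $(n^q,\varepsilon)$-invariant means $|(n^q+T_i)\triangle T_i| < \varepsilon|T_i|$ in the additively written abelian group $N = B^X$. The two structural facts to exploit are that $T_i = E_i^{Z_i}$ is a ``box'', i.e.\ it is the coordinatewise product $\prod_{x\in Z_i}E_i$ over the \emph{finite} coordinate set $Z_i$, and that $n^q$ is obtained from the fixed element $n$ by the coordinate permutation of $X$ induced by $q$. Since $Q$ acts on the $Q$-set $X$ by bijections, $n^q$ has support of the same size $s = |\mathrm{supp}(n)|$ as $n$ and each of its coordinate values lies in the fixed finite set $D = \{n_x \: : \: x\in X\}\subset B$. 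Thus the whole estimate should reduce, uniformly in $q$, to the Folner property of the cubes $E_i = \mathrm{B}_B(i!,\Delta)$ inside $B$.

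Concretely, first I would use the hypothesis $n^q\in T_i = E_i^{Z_i}$ to deduce $\mathrm{supp}(n^q)\subset Z_i$, so that translation by $n^q$ acts coordinatewise and $n^q + T_i = \prod_{x\in Z_i}(n^q_x + E_i)$. Next I would bound the symmetric difference of the two product sets $\prod_{x}(n^q_x+E_i)$ and $\prod_{x}E_i$ by the standard telescoping/union estimate; since all factors have the same cardinality $|E_i|$ this gives
$$ |(n^q+T_i)\triangle T_i| \;\le\; |E_i|^{|Z_i|-1}\sum_{x\in Z_i}\big|(n^q_x+E_i)\triangle E_i\big|,$$
where only the $s$ coordinates with $n^q_x\neq 0$ contribute. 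Dividing by $|T_i| = |E_i|^{|Z_i|}$ and using $n^q_x\in D$ then yields
$$ \frac{|(n^q+T_i)\triangle T_i|}{|T_i|} \;\le\; \sum_{x\in\mathrm{supp}(n^q)}\frac{|(n^q_x+E_i)\triangle E_i|}{|E_i|} \;\le\; s\cdot\max_{b\in D}\frac{|(b+E_i)\triangle E_i|}{|E_i|},$$
a quantity depending only on $i$. Finally, $E_i = \mathrm{B}_B(i!,\Delta)$ is a Folner sequence in the finitely generated abelian group $B$ by Proposition \ref{prop:infty balls are a centered Folner sequence}, so for each $b\in B$ the ratio $|(b+E_i)\triangle E_i|/|E_i|$ tends to $0$; as $D$ is finite, the maximum over $b\in D$ also tends to $0$, and I would choose $j = j(n,\varepsilon)$ so that $s$ times this maximum is $<\varepsilon$ for all $i>j$.

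There is essentially no deep obstacle here: the statement is a bookkeeping computation resting on the box structure of $T_i$ and the Folner property of $E_i$. The only points that need mild care are checking that the final bound is genuinely uniform over all $q$ with $n^q\in T_i$ — which it is, since it is controlled purely by $s$, $D$, and the Folner-type ratios of the $E_i$ — and writing the symmetric-difference estimate for product sets correctly (via the telescoping inclusion $\prod_k A_k\triangle\prod_k B_k \subset \bigcup_k B_1\times\cdots\times B_{k-1}\times(A_k\triangle B_k)\times A_{k+1}\times\cdots\times A_m$ followed by counting).
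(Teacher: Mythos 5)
Your proof is correct and follows essentially the same route as the paper: fix a finite set of coordinates containing $\mathrm{supp}(n)$, invoke the Folner property of $E_i$ in $B$ for the finitely many coordinate values of $n$, and then use the product (``box'') structure of $T_i = E_i^{Z_i}$ together with the telescoping symmetric-difference estimate, noting that the bound is automatically uniform in $q$ because $n^q$ has the same support size and coordinate values as $n$. The paper's version uses $|Z_{i_0}|$ (for some $i_0$ with $\mathrm{supp}(n)\subset Z_{i_0}$) where you use $s=|\mathrm{supp}(n)|$, but this is a cosmetic difference.
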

\begin{proof}
Fix some $\varepsilon > 0$. Let   $n = \oplus_{x \in X} b_x$ be any element, where $b_x \in B$.   Recall that $X = \bigcup_i Z_i$. Therefore  there is an index $i_0 \in \NN$ such that $b_x = 0$ for all $x \in X \setminus Z_{i_0}$.  Since $E_i$ is a Folner sequence in the group $B$, we may choose $j = j(n,\varepsilon) $   such that $E_i$ is $(b_x,\frac{\varepsilon}{|Z_{i_0}|})$-invariant for all indices $ i > j$ and all elements $b_x \in B$ with $x \in Z_{i_0}$.   

Consider an element $q \in Q$  satisfying  $n^q \in T_i$ for some index $ i > j$ as in the statement of the Lemma. In particular    $q Z_{i_0} \subset Z_i$. We claim that the set $T_i$ is $(n^q,\varepsilon)$-invariant. Indeed, the size of the symmetric difference $n^q T_i \triangle T_i$ satisfies
$$ |n^q T_i \triangle T_i| \le |E_i|^{|Z_i|-1} \sum_{x \in Z_{i_0}}   |b_{q x} E_i \triangle E_i|    \le |Z_{i_0}| \cdot \frac{\varepsilon}{|Z_{i_0}|} \cdot |T_i| = \varepsilon |T_i|$$
as required.
\end{proof}

 \begin{lemma}
\label{lem:F_i is Folner}
$F_i = I_i T_i = I_i E_i^{Z_i}$ is a Folner sequence in the group $G$.
\end{lemma}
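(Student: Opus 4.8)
The plan is to verify the Folner condition for $F_i = I_i T_i = I_i E_i^{Z_i}$ directly, element by element, using the product structure of $G = Q \ltimes N$ and the two facts already established in this section: that $I_i$ is a centered exhausting Folner sequence in $Q$ (Item (\ref{it:boxes}) of Lemma \ref{lem:controlled Folner sets and transversals}, with Proposition \ref{prop:infty balls are a centered Folner sequence}) and that $T_i = E_i^{Z_i}$, built from the centered exhausting Folner sequence $E_i$ in $B$, behaves well under the translations occurring inside $F_i$ (Lemma \ref{lem:tiny}). Since $G$ is generated by $Q \cup N$, it suffices to check $|gF_i \triangle F_i|/|F_i| \to 0$ separately for $g = r \in Q$ and for $g = n \in N$, and the general case then follows from the standard subadditivity estimate $|g_1 g_2 F \triangle F| \le |g_1(g_2 F) \triangle g_1 F| + |g_1 F \triangle F| = |g_2 F \triangle F| + |g_1 F \triangle F|$.

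First I would handle $g = r \in Q$. Writing a typical element of $F_i$ as $st$ with $s \in I_i$, $t \in T_i$, left multiplication gives $r(st) = (rs) t$ because $r \in Q$ acts, but one must be careful: in the semidirect product $rs \in Q$, so $r \cdot st = (rs) t$ only after noting that conjugation does not enter here since $t \in N$ sits to the right of a pure $Q$-element. Thus $r F_i = (rI_i) T_i$, and since $I_i$ is Folner in $Q$ we get $|rI_i \triangle I_i| = o(|I_i|)$, hence $|rF_i \triangle F_i| \le |rI_i \triangle I_i|\cdot|T_i| = o(|I_i||T_i|) = o(|F_i|)$; here I use that $|F_i| = |I_i|\,|T_i|$ because, as $I_i$-translates of $T_i$ range over distinct cosets, the product is a disjoint union (this is exactly the transversal bookkeeping of Lemma \ref{lem:F_i are transversals}, or can be seen directly). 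Second, for $g = n \in N$: multiplying $n \cdot st = s (s^{-1} n s) t = s\, n^{s^{-1}} t$ using $q = gN$ notation, so $n F_i = \bigcup_{s \in I_i} s\bigl(n^{s^{-1}} T_i\bigr)$. For each $s$ with $n^{s^{-1}} \in T_i$ — which holds for all $s \in I_i$ with $i$ large, since $T_i = E_i^{Z_i}$ exhausts $N$ and $I_i$ is centered so $sZ_j \subset Z_i$ for a density-one set of $s$ — Lemma \ref{lem:tiny} gives $|n^{s^{-1}} T_i \triangle T_i| \le \varepsilon |T_i|$ for $i > j(n,\varepsilon)$. Summing over $s \in I_i$ (and discarding the $o(|I_i|)$ bad indices) yields $|nF_i \triangle F_i| \le \varepsilon |I_i|\,|T_i| + o(|I_i|\,|T_i|)$, and since $\varepsilon$ was arbitrary this is $o(|F_i|)$.

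The main obstacle I anticipate is the bookkeeping that makes the two estimates combine cleanly: one needs $|F_i| = |I_i|\,|T_i|$ (disjointness of the pieces $sT_i$), one needs the "bad" set of $s\in I_i$ for which $n^{s^{-1}}\notin T_i$ to have vanishing density, and one needs the index $j(n,\varepsilon)$ from Lemma \ref{lem:tiny} to be uniform over the relevant $s$ — all of which follow from $I_i$ being centered and exhausting together with $T_i$ exhausting $N$, but they must be assembled carefully. Once these are in place the computation is routine. I would close by remarking that this is the precise point where we cannot merely invoke Proposition \ref{prop:adapted and uniformly Folner gives product Folner}, since that would require passing to a subsequence of $T_i$, which we are not permitted to do.
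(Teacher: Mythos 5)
Your plan is correct and follows essentially the same route as the paper: you use the Folner property of $I_i$ in $Q$, the density-one set of $s\in I_i$ with $n^s\in T_i$ (via $I_i$ being centered and $T_i$ exhausting $N$), Lemma \ref{lem:tiny} for the near-invariance of $T_i$ under those conjugates, and $|F_i|=|I_i|\,|T_i|$; the paper handles $g=qn$ in one combined estimate rather than splitting via subadditivity, but the underlying inequalities are identical. One small slip: with the paper's convention $x^y=y^{-1}xy$, you have $s^{-1}ns=n^s$ rather than $n^{s^{-1}}$, matching the exponent $n^r$ used in the paper's proof.
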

 \begin{proof}

	Fix a constant $\varepsilon > 0$ and an element $g \in G$. Write  $g = q n$ for some elements $q \in Q$ and $ n \in N$.  We will show that the subsets $ F_i $  are $(g,\varepsilon)$-invariant for all $i\in\NN$ larger than some $i_0 = i_0(g,\varepsilon) \in \NN$.


	Let $\varepsilon' > 0$ be a sufficiently small constant   so that $(1-2\varepsilon')^2 >  1-\varepsilon$. 
	Since $I_i$ is a  Folner sequence in the group $Q$  there is an index $i_1 \in \NN$ such that the subset $I_i$ is $(q,\varepsilon')$-invariant for all $i > i_1$.
 Since the sequence $I_i$ is centered, we may   argue exactly as in Lemma \ref{lem:adapted subsets T_i} to find an index $i_2 \in \NN$ such that
	$$ |J_i| > (1-\varepsilon')|I_i| \quad \text{where} \quad J_i = \{r \in I_i \: : \: n^r \in T_i \} $$
for all $i > i_2$.	
 Finally, by Lemma \ref{lem:tiny}    there is some index $i_3 \in \NN$   such that $ T_i$ is $(n^r,\varepsilon')$-invariant provided $n^r \in T_i$ and for all $ i > i_3$.
Take $ i_0 =\max\{i_1,i_2, i_3\}$.

	
We claim that   the subset   $F_i= I_i T_i  $ is indeed $(g,\varepsilon)$-invariant for all  indices $i > i_0$. Fix such an index $ i \in \NN$.
There is a subset $I'_i  \subset I_i$ such that 
	$$ I'_i  \subset q I_i \cap I_i \quad \text{and} \quad |I_i'| > (1-\varepsilon')|I_i|. $$
	For every  element $r \in J_i $ there is a subset $T_i'(r) \subset T_i$ satisfying 
	$$ T'_i(r) \subset ( n^{r} +  T_i) \cap T_i \quad \text{and} \quad |T_i'(r)| > (1- \varepsilon')|T_{i}|. $$
The translate $g F_i $ can be expressed as 
	\begin{align}
	\label{eq:gFi}
	gF_i = g I_i T_i = 
	q n \bigcup_{r\in I_i}r T_i  = 
	q \bigcup_{r\in I_i } r (n^{r} + T_i)  =   \bigcup_{r\in I_i}qr (  n^{r} + T_i).
	\end{align} 
It follows from Equation $(\ref{eq:gFi})$ and from the above discussion that 
	\begin{equation}
	 \bigcup_{r \in I'_i \cap J_i  } r T_i'(r)  \subset g F_i  \cap F_i.
	 \label{eq:strange}
	\end{equation}
Using Equations (\ref{eq:gFi}) and (\ref{eq:strange}) we can estimate   the size of $gF_i \cap F_i$ to be  at least
	$$ |gF_i  \cap  F_i| \ge (1- 2\varepsilon')  |I_i| (1- \varepsilon')|T_i| > (1-2\varepsilon')^2|F_i| > (1-\varepsilon)|F_i|$$
	as required.  \end{proof}
 
 \begin{remark}
The   two   Lemmas \ref{lem:F_i are transversals} and \ref{lem:F_i is Folner}
are to be  compared with  \cite{weiss2001monotilable}, see also Corollary   \ref{cor:residually finite amenable has Foler sequence of finite to one transverals} and Proposition \ref{prop:adapted and uniformly Folner gives product Folner} above.  Weiss' results  are of a similar nature but   more general as they apply to   arbitrary group extensions and provide a universal sequence of transversals. 

However, our two Lemmas \ref{lem:F_i are transversals} and \ref{lem:F_i is Folner}
 are    more precise, in the sense that we do not need to pass to any subsequences whatsoever. This issue is crucial for our purposes, since passing to a subsequence would violate Lemma \ref{lem:adapted subsets T_i}.
\end{remark}

\subsection*{Summary}
Let $R$ be a fixed subgroup of $Q$ and $H \le G$ be any subgroup with $R = Q_H$ and with $[N:\mathrm{N}_N(H)] < \infty$. 
We     put together all of the above  propositions, culminating with a proof of Theorem  \ref{thm:every subgroup admits a controlled approximation} stated in the   beginning of \S\ref{sec:approximation in permutational}.

The main part of the argument is contained in Lemmas \ref{lem:the subgroups N_i}
 and  \ref{lem:controlled approximation for the subgroup H}. This is where we   construct the   controlled approximation $(K_i,M_i,T_i)$ for the subgroup $H$. Here the $K_i \le G$ are finite index subgroups with Goursat triplets $[K_i] = [Q_i, N_i, \alpha_i]$, the subgroups $M_i \le N$ are given by $M_i = B^{Y_i}$ and  the subsets $T_i \subset M_i$ are given by $T_i = E_i^{Z_i} \subset M_i$.

The sequence $F_i = I_i T_i $ is Folner in the group $G$ and consists of finite-to-one transversals of the subgroups $K_i$. This is established in   Lemmas  \ref{lem:F_i are transversals} and
\ref{lem:F_i is Folner}. The sequence of subsets $T_i$ is adapted to the sequence $I_i$ by Lemma \ref{lem:adapted subsets T_i}. 

We have verified all of the necessary requirements for $(K_i,M_i,T_i)$ to be a controlled approximation to the subgroup $H$ with an adapted   sequence of finite-to-one transversals $F_i$, see Definitions \ref{def:controlled approximation} and \ref{def:adapted transversals}. In our case, the subsets $T_i$ and $P_i$ that appear in these two definitions happen to coincide.
 This completes the proof of  Theorem  \ref{thm:every subgroup admits a controlled approximation}. \qed

 \section {Invariant random subgroups of permutational wreath products}

We are ready to present a proof of our main result Theorem \ref{thm:main theorem}. It is restated below for the reader's convinience.

\begin{theorem*}
\label{thm:every IRS is co-sofic}
Let $G$ be a permutational wreath product of two finitely generated abelian groups. Then every invariant random subgroup of $G$ is co-sofic.
\end{theorem*}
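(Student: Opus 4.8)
The plan is to combine the general theory of Sections~\ref{sec:weiss approximable subgroups}--\ref{sec:controlled approximations} with the construction carried out in Section~\ref{sec:approximation in permutational}. Write $G = B \wr_X Q = Q \ltimes N$ with $N = B^X$ the permutational module; since $G$ is metabelian it is amenable, so the pointwise ergodic theorem and the Weiss--approximation machinery are available. Let $\mu \in \IRS{G}$. By Corollary~\ref{cor:enough to prove co-sofic for ergodic} it suffices to treat the case where $\mu$ is \emph{ergodic}, so assume this from now on.

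First I would read off the invariants of $\mu$. By Proposition~\ref{prop:pushing forward invariant random subgroups} the pushforward $\pi_*\mu$ is an ergodic invariant random subgroup of the abelian group $Q$, hence a single atom: there is a fixed $R \le Q$ with $Q_H = \pi(H) = R$ for $\mu$-almost every $H$. By Proposition~\ref{prop:an element of an invariant random subgroup has finite orbit for N}, $\mu$-almost every $H$ also satisfies $[N:\mathrm{N}_N(H)] < \infty$. Thus the set of subgroups $H$ with $Q_H = R$ and $[N:\mathrm{N}_N(H)] < \infty$ is $\mu$-conull. Now apply Theorem~\ref{thm:every subgroup admits a controlled approximation} to this single subgroup $R$: it furnishes \emph{one} Folner sequence $F_i$ in $G$, depending on $R$ alone, such that every such $H$ admits a controlled approximation $(K_i, M_i, T_i)$ for which $F_i$ is an adapted sequence of finite-to-one transversals of $\mathrm{N}_G(K_i)$ for all large $i$. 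For each such $H$, Theorem~\ref{thm:a group with a controlled approximation is co-sofic} --- whose only additional hypothesis, $[N:\mathrm{N}_N(H)] < \infty$, holds by assumption --- converts this into a Weiss approximation $(K_i, F_i)$ for $H$, with the \emph{same} Folner sequence $F_i$ and a sequence $K_i$ of finite index subgroups that may depend on $H$. In other words, $\mu$-almost every subgroup of $G$ admits a Weiss approximation of the form $(K_i, F_i)$ relative to the fixed Folner sequence $F_i$. Feeding this into Theorem~\ref{thm:cosofic subgroup implies cosofic IRS}, applied to the amenable group $G$, the Folner sequence $F_i$, and the measure $\mu$, we conclude that $\mu$ is co-sofic.

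The one genuinely delicate point is that a \emph{single} Folner sequence $F_i$ must serve the whole (typically uncountable) family of $\mu$-typical subgroups simultaneously, for otherwise Theorem~\ref{thm:cosofic subgroup implies cosofic IRS} does not apply. This is exactly what ergodicity buys us: it forces the quotient datum to be the one subgroup $R$, so the output of Theorem~\ref{thm:every subgroup admits a controlled approximation} depends only on $R$ and not on the individual $H$; moreover all the ``for $i$ sufficiently large'' thresholds appearing along the way are harmless, since co-soficity, the Weiss approximation condition, and membership of $F_i * K_i$ in $\IRSfi{G}$ are all asymptotic in nature. Everything else --- the reduction to the ergodic case, the identification of the conull set, and the verification of the numerical hypotheses --- is routine bookkeeping, and the general (non-ergodic) case follows at once from Corollary~\ref{cor:enough to prove co-sofic for ergodic}. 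Permutation stability of $G$, and hence Theorems~\ref{thm:main theorem for lamplighters} and~\ref{thm:main theorem}, then follows from Theorem~\ref{thm:BLT}.
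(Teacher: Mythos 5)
Your proposal is correct and follows essentially the same route as the paper's own proof: reduce to the ergodic case via Corollary~\ref{cor:enough to prove co-sofic for ergodic}, use Propositions~\ref{prop:pushing forward invariant random subgroups} and~\ref{prop:an element of an invariant random subgroup has finite orbit for N} to pin down $R = Q_H$ and $[N:\mathrm{N}_N(H)] < \infty$ on a $\mu$-conull set, invoke Theorem~\ref{thm:every subgroup admits a controlled approximation} to get a single Folner sequence $F_i$ (depending only on $R$) together with controlled approximations, convert them into Weiss approximations via Theorem~\ref{thm:a group with a controlled approximation is co-sofic}, and conclude with Theorem~\ref{thm:cosofic subgroup implies cosofic IRS}. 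Your observation that a single Folner sequence must serve all $\mu$-typical $H$ simultaneously is precisely the point the paper arranges for in Theorem~\ref{thm:every subgroup admits a controlled approximation}.
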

\begin{proof}
In light of  Corollary  \ref{cor:enough to prove co-sofic for ergodic} it will suffice to prove the theorem for \emph{ergodic} invariant random subgroups.

Let $\mu \in \IRSerg{G}$  be any ergodic invariant random subgroup of $G$. Consider the map $\pi_* : \IRSerg{G} \to \IRSerg{Q}$, see  Proposition \ref{prop:pushing forward invariant random subgroups}. Since $Q$ is abelian $\pi_* \mu$ is atomic.  In other words, there is some fixed subgroup $R \le Q$ such that   $\mu$-almost every subgroup $H\le G$ satisfies $R = Q_H$. Moreover $[N:\mathrm{N}_N(H)] < \infty$ holds for $\mu$-almost every subgroup $H \le G$    according to 
 Proposition \ref{prop:an element of an invariant random subgroup has finite orbit for N}.

The group $G$ admits a   Folner sequence $F_i$  such that $\mu$-almost every subgroup $H \le G$ has a controlled approximation  with  finite index subgroups $K_i$ and with $F_i$ being an adapted sequence of finite-to-transversals of $\mathrm{N}_G(K_i)$. This is precisely the content of Theorem \ref{thm:every subgroup admits a controlled approximation} applied with respect to the subgroup $R \le Q$. 

The existence of these controlled approximations implies that $\mu$-almost every subgroup $H$ of $G$ is Weiss approximable, as was established in Theorem \ref{thm:a group with a controlled approximation is co-sofic}.  

We conclude that the invariant random subgroup $\mu$ is co-sofic relying on the ergodic theorem for amenable groups, see Theorem \ref{thm:cosofic subgroup implies cosofic IRS}.
\end{proof}

The permutation stability of a  finitely generated permutational wreath product  of two finitely generated abelian groups   follows from the fact that all invariant random subgroups in such a  group are co-sofic and relying on    Theorem \ref{thm:BLT}.


%
%
%

\bibliography{cosofic}
\bibliographystyle{alpha}

\end{document}